\definecolor{gr}{rgb}   {0.,   0.69,   0.23 }
\definecolor{bl}{rgb}   {0.,   0.5,   1. }
\definecolor{mg}{rgb}   {0.85,  0.,    0.85}
\definecolor{yl}{rgb}   {0.8,  0.7,   0.}
\definecolor{or}{rgb}  {0.7,0.2,0.2}
\tikzset{
	ddot/.style={circle,fill=white,draw=black,inner sep=0pt,minimum size=0.8mm},
	>=stealth,
	}
\tikzset{
	ddot2/.style={circle,fill=black,draw=black,inner sep=0pt,minimum size=0.8mm},
	>=stealth,
	}
\newtheorem{theorem}{Theorem} [section]
\newtheorem{lemma}[theorem]{Lemma}
\newtheorem{remark}[theorem]{Remark}
\newtheorem{definition}[theorem]{Definition}
\newtheorem{corollary}[theorem]{Corollary}
\newtheorem{oldtheorem}{Theorem}
\DeclareMathOperator{\Id}{Id}
\DeclareMathOperator{\sgn}{sgn}
\DeclareMathOperator{\Ker}{Ker}
\newcommand{\1}{\hspace{0.5mm}\text{I}\hspace{0.5mm}}
\newcommand{\II}{\text{I \hspace{-2.8mm} I} }
\newcommand{\I}{\mathcal{I}}
\newcommand{\noi}{\noindent}
\newcommand{\Z}{\mathbb{Z}}
\newcommand{\R}{\mathbb{R}}
\newcommand{\T}{\mathbb{T}}
\newcommand{\bul}{\bullet}
\renewcommand{\H}{\mathcal{H}}
\newcommand{\CC}{\mathcal{C}}
\renewcommand{\L}{\mathcal{L}}
\newcommand{\RR}{\mathcal{R}}
\newcommand{\F}{\mathcal{F}}
\newcommand{\al}{\alpha}
\newcommand{\be}{\beta}
\newcommand{\dl}{\delta}
\newcommand{\updl}{\updelta}
\newcommand{\Dl}{\Delta}
\newcommand{\eps}{\varepsilon}
\newcommand{\g}{\gamma}
\newcommand{\G}{\Gamma}
\newcommand{\ld}{\lambda}
\newcommand{\Ld}{\Lambda}
\newcommand{\s}{\sigma}
\newcommand{\ft}{\widehat}
\newcommand{\cj}{\overline}
\newcommand{\dx}{\partial_x}
\newcommand{\dt}{\partial_t}
\newcommand{\dd}{\partial}
\newcommand{\ta}{\theta}
\renewcommand{\l}{\ell}
\newcommand{\Gdl}{\mathcal{G}_{\dl} }
\newcommand{\les}{\lesssim}
\newcommand{\ges}{\gtrsim}
\newcommand{\jb}[1]
{\langle #1 \rangle}
\newcommand{\ind}{\mathbf 1}
\newcommand{\M}{\mathcal{M}}
\newcommand{\N}{\mathbb{N}}
\newcommand{\NN}{\mathcal{N}}
\newcommand{\cL}{\mathcal{L}}
\newcommand{\cC}{\mathcal{C}}
\newcommand{\cD}{\mathscr{D}}
\newcommand{\W}{\mathcal{W}}
\newcommand{\Lip}{\mathrm{Lip}}
\newcommand{\uw}{U^w}
\newcommand{\uu}{\mathbf{u}}
\newcommand{\z}{\zeta}
\newcommand{\Ta}{\Theta}
\newcommand{\sub}{\substack}
\newcommand{\BO}{\text{\rm BO} }
\newcommand{\KDV}{\text{\rm KdV} }
\newcommand{\NLS}{\text{\rm NLS} }
\newcommand{\dNLS}{\text{\rm dNLS} }
\newcommand{\too}{\longrightarrow}
\newtheorem*{ackno}{Acknowledgements}
\numberwithin{equation}{section}
\numberwithin{theorem}{section}
\begin{document}
\baselineskip = 14pt

\title[Nonlinear PDEs with modulated dispersion IV]
{Nonlinear PDEs with modulated dispersion IV:\\
normal form approach and unconditional uniqueness}

\author[M.~Gubinelli, G.~Li, J.~Li, and T.~Oh]
{Massimiliano Gubinelli, Guopeng Li, Jiawei Li,  and Tadahiro Oh}

\address{Massimiliano Gubinelli\\
Mathematical Institute\\ University of Oxford\\ United Kingdom}

\email{gubinelli@maths.ox.ac.uk}

\address{Guopeng Li, 
School of Mathematics and Statistics, Beijing Institute of Technology, Beijing 100081, China}

\email{guopeng.li@bit.edu.cn}

\address{Jiawei Li, School of Mathematics\\
The University of Edinburgh\\
and The Maxwell Institute for the Mathematical Sciences\\
James Clerk Maxwell Building\\
The King's Buildings\\
Peter Guthrie Tait Road\\
Edinburgh\\ 
EH9 3FD\\
 United Kingdom}

\email{jiawei.li@ed.ac.uk}

\address{
Tadahiro Oh, 
School of Mathematics\\
The University of Edinburgh\\
and The Maxwell Institute for the Mathematical Sciences\\
James Clerk Maxwell Building\\
The King's Buildings\\
Peter Guthrie Tait Road\\
Edinburgh\\
EH9 3FD\\
 United Kingdom\\
and School of Mathematics and Statistics, Beijing Institute of Technology, Beijing 100081, China}

%

\email{hiro.oh@ed.ac.uk}

\subjclass[2020]{35Q53, 35Q35, 35Q55, 
60L20,  65M12, 65M15}

\keywords{modulated dispersion; dispersion management;  Korteweg-de Vries equation; 
Benjamin-Ono equation;
Schr\"odinger equation; 
normal form reduction; unconditional uniqueness; Young integral;
regularization by noise}

\begin{abstract}

We study the modulated  Korteweg-de~Vries equation (KdV)
on the circle
with a time non-homogeneous modulation acting on the linear dispersion term.
By adapting the normal form approach to the modulated setting, 
we prove sharp unconditional uniqueness
of solutions to the modulated KdV in $L^2(\T)$
if a modulation is  sufficiently irregular.
For example, this result implies that if the modulation is given by 
a sample path of a fractional Brownian motion 
with Hurst index $0 < H < \frac 25$, the modulated KdV on the circle is 
unconditionally well-posed in $L^2(\T)$.
At a philosophical level, 
our normal form approach can be viewed
as 
a {\it controlled path approach
to nonlinear Young integration}, 
which 
allows for 
 the construction of solutions
to the modulated KdV
(and the associated nonlinear Young integral)
{\it without} assuming any positive (H\"older) regularity in time.
As an interesting byproduct of our normal form approach, 
we  obtain an improved Euler approximation scheme 
as compared to the classical sewing lemma approach.

We also establish analogous sharp unconditional uniqueness results
for the modulated Benjamin-Ono equation
and the modulated derivative nonlinear Schr\"odinger equation (NLS)
with a quadratic nonlinearity.
In the appendix, 
we 
prove sharp unconditional uniqueness 
of the cubic modulated NLS on the circle in $H^{\frac 16}(\T)$.

\end{abstract}

%
\maketitle

\tableofcontents

\section{Introduction}\label{SEC:1}

\subsection{Modulated dispersive equations}
\label{SUBSEC:1.1}

In the works \cite{CG1, CGLLO1}
 with K.\,Chouk, we  
 considered a {\it modulated} dispersive equation of the following form:
 \begin{align}
\begin{cases}
\dt u +  L u \cdot \dt w=  \NN(u)  \\ 
u|_{t = 0} = u_0,
\end{cases}
\qquad ( t, x) \in \R_+ \times \M,
\label{ME1}
\end{align}

\noi
where $\M  = \R$ or $\T = \R/ (2\pi \Z)$,\footnote{By convention, we endow
$\T$ with the normalized Lebesgue measure $ dx_{_\T} =  (2\pi)^{-1}dx$
such that we do not need to carry factors involving $2\pi$.}
$\NN(u)$ denotes the nonlinearity, 
and 
 $w:\R_+\to\R$ is a continuous function of time, 
called a {\it modulation}, 
 acting on the linear dispersion term $Lu$.
For a specific modulation such as that given by a Brownian motion, 
stochastic calculus can be used to study 
the equation \eqref{ME1};  see \cite{DD, DT, DR2, Ste}.
For a general modulation $w$, however, 
such an approach based on stochastic calculus
is not available.
For example, 
consider 
the modulated 
 Korteweg-de Vries equation (KdV):
\begin{equation}
\label{kdv1}
\dt u+  \dx^3 u \cdot \dt w =\dx u^2
\end{equation}

\noi
which 
naturally appears 
as a model 
for
weakly nonlinear long waves in an inhomogeneous waveguide
(\cite{CMG, HZ}).
Here, 
the modulation $w$ is taken to be periodic but not differentiable
and thus 
a stochastic analysis approach does not apply to this model.

In \cite{CG1, CGLLO1},  
 with K.\,Chouk, we  
 developed 
  a {\it pathwise} approach 
to study the modulated dispersive equation~\eqref{ME1}
by exploiting {\it irregularity} of the modulation function $w$
(see Definition \ref{DEF:ir})
and  the temporal regularity of a solution~$u$.
In particular, 
in \cite{CGLLO1}, 
we established 
{\it regularization-by-noise}
phenomena for the   modulated 
KdV \eqref{kdv1}
 on both the circle and the real line.
For example, 
we proved that, given {\it any} $s \in \mathbb R$, the modulated KdV \eqref{kdv1} on $\T$ with a sufficiently irregular modulation is locally well-posed in $H^s(\T)$. 
On the one hand, since the 70's, 
there have been intensive research activities
on regularization by noise 
for stochastic differential equations
and stochastic parabolic partial differential equations;
see
 \cite{
GP, 
KR, 
 FGP, GG0, G22, RT, CGLLO1,
 ABLL} for the  references therein.
See also   survey works \cite{Flan, Gess}.
On the other hand, 
prior to our work~\cite{CGLLO1}, 
results on regularization by noise 
for  dispersive equations
were primarily limited to random initial data\,/\,additive noises of super-critical regularity;
see, for example, 
\cite{BO96, BT08, CO, BT3, BOP2, Poc, OP, 
GKO2, DNY2, DNY3, OOT1, Bring, OOT2, BDNY}.
In~\cite{CGLLO1}, 
we also established 
other forms of regularization by noise
such as 
semilinearization and nonlinear smoothing
(of arbitrary order)
for the modulated KdV \eqref{kdv1}
and other modulated dispersive equations
such as the modulated Benjamin-Ono equation (BO):
\begin{equation}
\label{BO}
\dt u-    \H\dx^2 u \cdot \dt w=\dx u^2,
\end{equation}

\noi
where $\H$ denotes the
Hilbert transform
with the Fourier multiplier $- \ind_{n\ne 0}\cdot i \sgn (n)$.
See~\cite{CGLLO1}
for a further discussion on various examples.
We also mention 
recent works
\cite{Tanaka, Robert1, Robert2, Robert3, Robert4, GGLO}
on pathwise well-posedness of  various modulated dispersive equations;
see \cite[Remark~1.14]{CGLLO1}.

In a recent work \cite{CGLLO2} with A.~Chapouto, we studied
pathwise well-posedness of stochastic modulated dispersive equations
with multiplicative noises
and established a new 
regularization-by-noise phenomenon by exploiting the nonlinear interaction between the unknown and the noise.
For example, 
for 
the stochastic KdV with 
a multiplicative fractional-in-time noise in the Young regime, 
we showed that 
irregularity of the modulation induces
smoothing on the stochastic convolution
in a pathwise manner, 
where 
a gain of spatial regularity  becomes (arbitrarily) larger for more irregular modulations.

Since $w$ in \eqref{ME1}
is assumed to be  only continuous, 
the modulated dispersive equation~\eqref{ME1} 
is  formal at this point.
To bypass this problem, 
 we studied 
 the following 
 Duhamel formulation (= mild formulation) of \eqref{ME1}
 in \cite{CG1, CGLLO1}:
\begin{equation}
u(t) = U^w(t) u_0 +  U^w({t}) \int_0^t  U^w({t'})^{-1}\NN( u(t') ) dt', 
\label{mild1}
\end{equation}

\noi
where $U^w(t) = e^{- w(t) L }$ denotes the modulated linear propagator
and we impose $w(0) = 0$ such that  
 $U^w(0) = \Id$.\footnote{The normalization $w(0) = 0$
 is not an additional restriction since only the time derivative $\dt w$
 appears in the modulated equation \eqref{ME1}.}
Define
 the 
 modulated interaction representation
$\uu$ of the unknown $u$ by setting
\begin{align}
\uu(t)=\uw(t)^{-1}u(t), 
\label{int1} 
\end{align}

\noi
for which 
the Duhamel formulation \eqref{mild1}
becomes 
\begin{equation}
\uu(t) = u_0 + \int_0 ^t \uw(t')^{-1}\NN(\uw(t') \uu(t'))dt'.
\label{mild2}
\end{equation}

\noi
Our basic strategy 
in \cite{CGLLO1, CGLLO2}
was 
to first make sense of the integral term in~\eqref{mild2}
as a  nonlinear Young integral
$\I^X(\uu)$ with a given driver $X$
(see, for example, \eqref{K1}
for the modulated KdV)
by applying  the sewing lemma  (Lemma \ref{LEM:sew})
from 
the theory of controlled rough paths~\cite{Gub04}
developed by the first author.
We then studied 
the resulting nonlinear Young differential equation:
\begin{equation}
\uu = u_0 + \I^X(\uu)
\label{YDE0}
\end{equation}

\noi
(possibly with an additive\,/\,multiplicative stochastic forcing)
by a contraction argument. 
We point out that 
 the (sufficient) temporal regularity of
the modulated interaction representation~$\uu$
was essential 
in applying the sewing lemma 
to construct
a nonlinear Young integral.

\medskip

Now, let us turn to the uniqueness property of  solutions.
Recall 
from  \cite{KATO} 
that  a Cauchy problem is  said to be {\it unconditionally (locally) well-posed}
in $H^s(\M)$
if for every initial condition $u_0 \in H^s(\M)$,
there exist $\tau >0$ and a unique solution $u \in C([0, \tau ];H^s(\M))$
with  $u|_{t = 0} = u_0$.
We refer to such uniqueness in the entire class $C([0, \tau ];H^s(\M))$, 
without intersecting with any auxiliary function space, as {\it unconditional uniqueness}.
Unconditional uniqueness is a concept of uniqueness which does not depend
on how solutions are constructed
and thus is of fundamental importance.

Given a driver $X$ of temporal regularity $\frac 12 < \g < 1$, 
a solution $\uu$ to \eqref{YDE0} constructed in \cite{CG1, CGLLO1}
belongs to $\CC^\al([0, \tau] ; H^s(\M))$
for some $0 < \al < 1$ with $\g + \al > 1$
such that the nonlinear Young integral $\I^X(\uu)$
makes sense via the sewing lemma.
As such, 
 the uniqueness, at the level of the modulated interaction representation,  holds 
 {\it conditionally}, i.e.~only  in the class 
 $\CC^\al([0, \tau]; H^s(\M))$;
 see Remark \ref{REM:control}.\footnote{\label{FT:3}Note that the uniqueness
 of solutions to modulated dispersive equations
 constructed 
in \cite{Robert2, Robert3, Robert4}
also holds only conditionally, 
namely only in 
 the  (refined) Fourier restriction norm space $X^s_\tau $.}

Our main goal in this paper is to present
an alternative solution theory based
on the {\it normal form method}, 
where we do {\it not} require any positive  temporal regularity on
the modulated interaction representation $\uu$.
In particular, we construct a solution 
by working  in $C([0, \tau]; H^s(\M))$
{\it without} utilizing any auxiliary function space.
This approach allows us to 
establish 
the first unconditional uniqueness results
for  modulated dispersive equations;
see Theorems \ref{THM:UU1}, \ref{THM:UU2},  \ref{THM:UU3}, 
and \ref{THM:UU4}.
We point out that 
these unconditional uniqueness results
are {\it sharp} in terms of the spatial regularity;
see Remarks \ref{REM:sharp1} and \ref{REM:sharp2}.

From the viewpoint of nonlinear Young integration theory, 
our approach 
presents a completely new construction 
of the nonlinear Young integral $\I^X(\uu)$
in \eqref{YDE0}
 without using any positive temporal regularity, 
but {\it by making use of 
the structure \eqref{YDE0}
of the unknown $\uu$}
(just like a controlled rough path).
In this sense, the novelty of 
our normal form approach
can be viewed,  
at a philosophical level, 
as 
a controlled path approach
to nonlinear Young integration
which 
significantly improves any existing theory
in terms of the temporal regularity.
See Subsection~\ref{SUBSEC:comp}
for a further discussion.
We also point out that our
normal form approach
provides an improvement on  
 the time discretization scheme
as compared to  the sewing lemma approach 
discussed in  \cite[Subsection~2.3]{CG1}
in terms of both the convergence rate and 
the required temporal regularity of a solution;
see
Remark~\ref{REM:num}.
See also Remark \ref{REM:NFx}.
Lastly, while the main body of the paper is devoted
to the modulated KdV-type 
equations with derivative quadratic nonlinearities, 
we also consider 
the modulated cubic nonlinear Schr\"odinger equation (NLS)
on the circle
in Appendix \ref{SEC:A}.

\subsection{Irregularity of the modulation}

We recall a particular notion of irregularity of the modulation $w$, 
introduced in~\cite{CatellierGubinelli, CG1},
which allowed us 
 to establish regularization by noise  in various  ODE and PDE contexts
in \cite{CatellierGubinelli,CG1, CGLLO1, CGLLO2}.
For further study on the notion of irregularity, 
see  \cite{GG, RT}.

\begin{definition}
\label{DEF:ir}
\rm

Let $\rho>0$ and  $0 < \g < 1$.
Given $T > 0$, we say that a function $w\in C([0,T];\R)$ is $(\rho,\g)$-irregular 
on the time interval $[0, T]$  if we have
\begin{align}
\|\Phi^w\|_{  \W^{\rho,\g}_T} 
:= \sup_{a\in \R} \sup_{0\leq r < t\leq T} \langle a \rangle^\rho \frac{|\Phi^w_{t,r}(a)|}{|t-r|^\g} 
< \infty, 
\label{rho1}
\end{align}

\noi
where 
\begin{align}
\Phi^w_{t,r}(a)
=\int_r^t e^{i  a w(t') } d t'.
\label{rho2}
\end{align}

\noi
We say that $w$ is 
$(\rho,\g)$-irregular 
on $\R_+$  if it is 
$(\rho,\g)$-irregular 
on $[0, T]$ for each finite $T > 0$.

\end{definition}

In~\cite{CatellierGubinelli}, 
the authors proved that a fractional Brownian motion is a $(\rho,\g)$-irregular function.

\begin{oldtheorem}
\label{THM:A}

Let $\{W_t\}_{t\in \R_+}$ be a fractional Brownian motion 
of Hurst index $H\in(0,1)$.
Then, 
for any $\rho < \frac{1}{2H}$,  
there exists $\frac 12 < \g < 1$
such that,  with probability one,  the sample paths of 
$W$ are $(\rho,\g)$-irregular on $\R_+$.

\end{oldtheorem}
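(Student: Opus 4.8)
The plan is to show that the random field
\[
\Phi^W_{t,r}(a)=\int_r^t e^{iaW_{t'}}\,dt'
\]
has, almost surely, good polynomial decay in the frequency $a$ together with a Hölder modulus of order $\g>\frac12$ in the time variables $(t,r)$. Geometrically, $\Phi^W_{t,r}(\,\cdot\,)$ is the spatial Fourier transform of the occupation measure of $W$ on $[r,t]$, so the decay in $a$ is a quantitative smoothness statement for the local time of $W$, and it is precisely the roughness of the fBm path that makes this local time smooth. The whole estimate is local in time, so by the last sentence of Definition~\ref{DEF:ir} it suffices to work on a fixed interval $[0,T]$ and take a countable union over $T\in\N$. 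The core of the argument is a moment estimate on $\Phi^W$ upgraded to a pathwise bound by a Kolmogorov/Garsia--Rodemich--Rumsey (GRR) continuity theorem, with a dyadic decomposition handling the supremum over $a\in\R$.

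The main input is a moment bound. Expanding the $2n$-th moment and using that $W$ is Gaussian,
\[
\E\big[|\Phi^W_{t,r}(a)|^{2n}\big]
=\int_{[r,t]^{2n}}\exp\Big(-\tfrac{a^2}{2}\,\mathrm{Var}\big(\textstyle\sum_{j=1}^{2n}\eps_j W_{s_j}\big)\Big)\,d\vec{s},
\]
where half of the signs $\eps_j$ are $+1$ and half are $-1$. After ordering the times $s_{(1)}<\cdots<s_{(2n)}$, the \emph{strong local nondeterminism} of fBm provides a lower bound $\mathrm{Var}(\sum_j\eps_j W_{s_j})\gtrsim \sum_j c_j^2\,|s_{(j)}-s_{(j-1)}|^{2H}$ with $c_j$ suitable partial sums of the signs. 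Integrating the resulting Gaussian over the simplex in the increment variables $|s_{(j)}-s_{(j-1)}|$, and interpolating between the small-time regime $|t-r|\ll\langle a\rangle^{-1/H}$ (where all exponentials are $\approx1$ and one simply gets the volume $|t-r|^{2n}$) and the large-time regime (where each increment contributes a factor $\langle a\rangle^{-1/H}$), yields, for every $\ld\in[0,1]$,
\[
\E\big[|\Phi^W_{t,r}(a)|^{2n}\big]\les C^n\,|t-r|^{(2-\ld)n}\,\langle a\rangle^{-\ld n/H}.
\]
Equivalently, $\big\|\Phi^W_{t,r}(a)\big\|_{L^{2n}(\Omega)}\les |t-r|^{(2-\ld)/2}\,\langle a\rangle^{-\ld/(2H)}$.

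To finish, fix a target $\rho<\frac{1}{2H}$, so that $2H\rho<1$, and choose $\ld$ strictly between $2H\rho$ and $1$; this nonempty choice is exactly where the strict inequality $\rho<\frac1{2H}$ is used. For each fixed $a$, applying Kolmogorov's theorem to the additive field $(t,r)\mapsto\Phi^W_{t,r}(a)$ with exponent $2n$ and time-power $(2-\ld)n$ gives an almost sure Hölder bound of any order $\g<\frac{2-\ld}{2}-\frac1{2n}$, which exceeds $\frac12$ once $n>\frac{1}{1-\ld}$. The supremum over $a$ is then controlled by a dyadic decomposition $\{|a|\sim2^k\}$: within each block, a moment estimate for $\Phi^W_{t,r}(a)-\Phi^W_{t,r}(a')$ provides Hölder continuity in $a$, and the weighted sum $\sum_k 2^{k\rho}\,2^{-k\ld/(2H)}$ converges precisely because $\rho<\frac{\ld}{2H}$. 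Combining these yields $\|\Phi^W\|_{\W^{\rho,\g}_T}<\infty$ almost surely with $\g>\frac12$, as claimed.

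The delicate step is the lower bound on $\mathrm{Var}(\sum_j\eps_j W_{s_j})$ that is uniform over all time configurations, together with the bookkeeping of the combinatorial constants so that the $n$-dependence of the moment bound stays at worst of the form $C^n$ (or $(n!)^\theta$), which is what allows $n\to\infty$ and hence $\g>\frac12$. The local nondeterminism estimate and the ensuing simplex integration are also what produce the exact exponent $\frac{1}{2H}$; relaxing strictness in $\rho<\frac{1}{2H}$ would destroy the summability of the dyadic sum over $a$, so the result is sharp at the endpoint.
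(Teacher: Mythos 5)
This statement is quoted in the paper as Theorem \ref{THM:A} and is \emph{not} proved there: it is imported from \cite{CatellierGubinelli}, so there is no in-paper proof to compare against. Your sketch follows exactly the strategy of that reference: interpret $\Phi^W_{t,r}(a)$ as the Fourier transform of the occupation measure, bound the $2n$-th moment by Gaussian computation plus the (strong) local nondeterminism of fractional Brownian motion, interpolate with a parameter $\ld\in(2H\rho,1)$ to trade decay in $a$ against H\"older regularity in time, and upgrade to a pathwise bound on $\|\Phi^W\|_{\W^{\rho,\g}_T}$ via a Kolmogorov/Garsia--Rodemich--Rumsey argument combined with a dyadic decomposition in $a$; the exponent bookkeeping ($\g<1-\ld/2$ with $\g>\frac12$ forcing $\ld<1$, and summability of $\sum_k 2^{k\rho}2^{-k\ld/(2H)}$ forcing $\rho<\ld/(2H)$) matches the threshold $\rho<\frac1{2H}$ in the statement. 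The only points you leave at the level of assertion --- the uniform variance lower bound over all sign/time configurations and the control of the $n$-dependence of the constants so that $n\to\infty$ is permissible --- are precisely the technical core of \cite{CatellierGubinelli}, and your sketch correctly identifies them as the delicate steps rather than glossing over them.
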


Theorem \ref{THM:A}
shows that there exist continuous paths which are $(\rho, \g)$-irregular for arbitrarily large $\rho$. 
In a later work~\cite{GG}, it was shown  that $(\rho,\g)$-irregularity is a generic property of H\"older functions of sufficiently low regularity; see \cite[p.\,2418 and Theorem 3.1]{GG}.

\begin{oldtheorem}
\label{THM:B}
Let  $d \ge 1$.
Given  any $0 < \delta < 1$, 
a generic $\dl$-H\"older continuous  function $w \in C^\delta([0,1];\R^d)$ 
 is $(\rho,\g)$-irregular for any $\rho < \frac1{2\delta}$ with  some $\gamma = \gamma(\rho) \in (\frac 12,1)$.
\end{oldtheorem}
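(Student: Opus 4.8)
The plan is to read ``generic'' in the sense of \emph{prevalence}, the natural infinite-dimensional analogue of ``Lebesgue-almost every'': a Borel set $A \subseteq C^\dl([0,1];\R^d)$ is prevalent if there is a probability measure $\mu$ on $C^\dl([0,1];\R^d)$ such that, for \emph{every} $w \in C^\dl([0,1];\R^d)$, one has $w + v \in A$ for $\mu$-almost every $v$. I would take the probe measure $\mu$ to be the law of a fractional Brownian motion, using Theorem~\ref{THM:A} as the engine. Fix $\rho < \frac1{2\dl}$. The inequality $\rho < \frac{1}{2\dl}$ is exactly the statement that the open interval $(\dl, \frac{1}{2\rho})$ is nonempty, so I may pick a Hurst index $H \in (\dl, \frac{1}{2\rho})$. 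Then $H > \dl$ guarantees that, with probability one, the sample paths of the fBm $W^H$ lie in $C^\dl([0,1];\R^d)$, while $H < \frac{1}{2\rho}$, i.e.\ $\rho < \frac1{2H}$, makes Theorem~\ref{THM:A} applicable. Setting $\mu = \Law(W^H)$, it then suffices to show: for each fixed $w$, the shifted process $w + W^H$ is $(\rho,\g)$-irregular with probability one, with stochastic control \emph{uniform in} $w$. (In the vector case one uses $a \in \R^d$ and the phase $a \cdot (w+W^H)(t')$ in \eqref{rho2}.)

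The conceptual crux is that the deterministic shift $w$ is harmless for the even moments controlling \eqref{rho1}. Writing $X = w + W^H$, which is Gaussian with mean $w$ and the same covariance as $W^H$, one has for each integer $n \ge 1$
\begin{equation*}
\E\big|\Phi^{X}_{t,r}(a)\big|^{2n}
= \int_{[r,t]^{2n}} \E\Big[\exp\Big(i a\cdot \sum_{j=1}^{2n}\eps_j X(s_j)\Big)\Big]\, ds,
\end{equation*}
where $\eps_j = +1$ for $j \le n$ and $\eps_j = -1$ for $j > n$. Since for a Gaussian vector the mean enters only through a unimodular phase,
\begin{equation*}
\Big|\E\Big[\exp\Big(ia\cdot\textstyle\sum_j \eps_j X(s_j)\Big)\Big]\Big|
= \exp\Big(-\tfrac12 \mathrm{Var}\Big(a\cdot \textstyle\sum_j \eps_j W^H(s_j)\Big)\Big),
\end{equation*}
which does not depend on $w$. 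As the left-hand side of the first display is real and nonnegative, bounding the integrand by its modulus yields the clean comparison $\E|\Phi^{w+W^H}_{t,r}(a)|^{2n} \le \E|\Phi^{W^H}_{t,r}(a)|^{2n}$. Hence the local-nondeterminism moment estimates underlying Theorem~\ref{THM:A} apply to $W^H$ and transfer verbatim to give
\begin{equation*}
\E\big|\Phi^{w+W^H}_{t,r}(a)\big|^{2n} \les_n \jb{a}^{-2n\rho'}\, |t-r|^{2n\g'}
\end{equation*}
for suitable $\rho' > \rho$ and $\g' > \g$, with implicit constant \emph{independent of} $w$.

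With the $w$-uniform moment bounds in hand, the remaining step is to upgrade them to the almost sure pathwise supremum bound \eqref{rho1}, i.e.\ $\|\Phi^{w+W^H}\|_{\W^{\rho,\g}_1} < \infty$. I would do this by a Garsia--Rodemich--Rumsey\,/\,Kolmogorov-type argument applied jointly in the endpoints $(r,t)$ and in the frequency $a$: the temporal Hölder factor $|t-r|^{\g}$ is recovered from the $|t-r|^{2n\g'}$ decay in the standard way, while the polynomial weight $\jb{a}^{\rho}$ together with the supremum over the noncompact frequency range is handled by a dyadic decomposition in $|a|$ (and a net on the sphere $S^{d-1}$ in the vector case) plus a union bound, exploiting the spare decay $\rho' > \rho$ and the continuity of $a \mapsto \Phi^{w+W^H}_{t,r}(a)$. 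This produces, for each $w$, an almost surely finite irregularity norm, establishing prevalence and hence the genericity asserted in Theorem~\ref{THM:B}. The hard part will be precisely this last upgrade: securing uniform-in-$a$ control with the correct polynomial weight over the full frequency range, which forces a careful balancing of the moment exponent $n$ against the two regularity gaps $\rho' - \rho$ and $\g' - \g$.
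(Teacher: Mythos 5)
This paper does not prove Theorem~\ref{THM:B}: it is recalled as a known result from \cite{GG} (see the citation to \cite[p.\,2418 and Theorem 3.1]{GG}), so there is no in-paper proof to compare against. Your proposal is nonetheless correct and reconstructs essentially the argument of \cite{GG}: prevalence via a fractional Brownian probe measure with $H\in(\dl,\tfrac1{2\rho})$, the key point being that a deterministic shift enters the characteristic function of the Gaussian vector only through a unimodular phase, so the even-moment bounds on $\Phi^{w+W^H}_{t,r}(a)$ are uniform in $w$ and the Kolmogorov/GRR upgrade to the $\W^{\rho,\g}$-norm goes through verbatim. The only point you gloss over is the routine measurability care needed because $C^\dl([0,1];\R^d)$ with the H\"older norm is non-separable, which \cite{GG} handles when setting up prevalence.
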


Here, ``generic'' is to be understood according to the notion of {\it prevalence}; see~\cite{GG} for details and the  references therein.

Theorems \ref{THM:A} and \ref{THM:B}
do not only provide important examples of $(\rho, \g)$-irregular paths, 
but also imply that 
 our main results
apply 
to the case
where a modulation is given by a fractional Brownian motion
or a generic  $\dl$-H\"older continuous function.
See, for example, \cite[Corollary~1.5]{CGLLO1}.

\subsection{Nonlinear Young differential equations}
\label{SUBSEC:YDE}

In this subsection, 
we briefly go over the nonlinear Young integration approach
in \cite{CGLLO1}
by taking the modulated 
 KdV~\eqref{kdv1} on~$\T$
 as an example.
See also \cite{HK, G23}
for the construction of nonlinear Young integrals in a more general setting.
For simplicity of the presentation, 
we impose
the mean-zero assumption\footnote{For the modulated KdV \eqref{kdv1}, 
if initial data has non-zero mean $\al_0$, then the following Galilean transformation:
$u(t, x)\mapsto u (t, x + 2\al_0t)-\al_0$, 
along with the conservation of the spatial mean (which can be verified
by arguing as in \cite[Section 6]{CGLLO1}), 
transforms
the equation with a non-zero mean into the mean-zero version.
The same comment applies
to the modulated BO \eqref{BO} and the modulated derivative nonlinear
Schr\"odinger equation \eqref{dNLS1}
studied in this paper.
}
 on solutions to
the modulated KdV \eqref{kdv1}
on $\T$.
We also apply the same convention
to  the other modulated dispersive equations 
considered in this paper.\footnote{Except for the cubic modulated nonlinear Schr\"odinger equation
\eqref{xNLS1}
studied in Appendix \ref{SEC:A}.}

Given  $\rho > 0 $ and  $\frac 12  < \g < 1$, 
fix a 
 $(\rho,\g)$-irregular function $w$ on an interval $[0, T]$.
In terms of 
the modulated interaction representation $\uu$
defined in~\eqref{int1}, 
the modulated KdV~\eqref{kdv1}
is written as 
\begin{equation}
\dt \uu = \uw(t)^{-1} \dx\big( (\uw(t) \uu)^2\big), 
\label{kdv1a}
\end{equation}

\noi
where 
$\uw (t)=e^{-   w(t)\dx^3}  $
denotes the modulated linear propagator for \eqref{kdv1}.
By writing~\eqref{kdv1a} (with initial data $u_0$) in the integral form, we have
\begin{equation}
\uu(t) = u_0 + \int_0 ^t \uw(t')^{-1}
\dx\big( (\uw(t') \uu)^2\big)dt'.
\label{mild3x}
\end{equation}

\noi
As mentioned in Subsection \ref{SUBSEC:1.1}, the main strategy
in \cite{CGLLO1}
was to construct the integral term 
in~\eqref{mild3x}
as a nonlinear Young integral via the sewing lemma, 
and solve the resulting nonlinear Young differential equation.

Let $X^\KDV$ be 
the 
bilinear driver  associated with the modulated KdV, 
defined by 
 \begin{align}
X^\KDV_{t,r}(f_1,f_2)
=\int_r^t  \uw(t')^{-1}  \partial_x
\big( (\uw(t')  f_1)( \uw(t') f_2 )\big) dt'
\label{K1}
\end{align}

\noi
for $0 \le r < t \le T$, 
where $f_1$ and $f_2$ are  functions  on $\T$.
By taking the Fourier transform, we have
\begin{align}
\F_x\big(X^\KDV_{t,r} (f_1,f_2)\big) (n)
= in \sum_{ \substack{n_1, n_2 \in \Z_*\\n = n_1+n_2}}
 \Phi^w_{t,r}(\Xi_\KDV (\bar n))
\ft f_1(n_1)  \ft f_2(n_2),
\label{K2}
\end{align}

\noi
where $\Z_* = \Z \setminus \{0\}$, $\Phi^w_{t,r}$ is as in \eqref{rho2}, 
and $\Xi_\KDV (\bar n)$ denotes the {\it resonance function}\footnote{Here, we follow
the terminology in \cite{Tao2}.  
We point out that $\Xi_\KDV (\bar n)$ is also called the modulation function (see \cite{KOY})
but we do not use this latter terminology to avoid
confusion with a modulation function $w$.
}
 for KdV given by 
\begin{align}
\begin{split}
\Xi_\KDV (\bar n) &  = \Xi_\KDV (n,n_1,n_2) 
= - n^3+ n_1^3 +n_2^3\\
&  = - 3n n_1 n_2, 
\end{split}
\label{K3a}
\end{align}

\noi
where the last equality holds under  $n = n_1+ n_2$.
As pointed out in \cite[Section 3]{CGLLO1}, 
local well-posedness of the modulated KdV \eqref{kdv1}
then follows 
once we show that 
the bilinear driver $X^\KDV_{t, r}$ is 
bounded from $H^s(\T) \times
H^s(\T)$ into $H^s(\T)$
with the norm $\sim |t- r|^\g$, 
uniformly in  $0 \le r < t \le T$
(recall that $\g > \frac 12$), 
which in turn follows from 
an elementary computation, 
using \eqref{rho1}, \eqref{K3a}, 
and 
 Lemma \ref{LEM:SUM};
 see
 \cite[Proposition~4.1]{CGLLO1}.
See also Lemma \ref{LEM:nonlin1}.

As a result, we obtained
the following local well-posedness result
for  the modulated KdV~\eqref{kdv1} on $\T$
(\cite[Theorem 1.3\,(i)]{CGLLO1}). 
We also recall the corresponding 
local well-posedness result
for the modulated BO \eqref{BO} on $\T$
(\cite[Theorem 1.11\,(i)]{CGLLO1}).

\begin{oldtheorem}\label{THM:1}
Given $\rho \ge\frac 12$,  $\frac12< \g < 1$, and $T> 0$, 
let  $w$ be $(\rho,\g)$-irregular on $[0, T]$ in the sense of Definition~\ref{DEF:ir}.

\smallskip

\noi
\textup{(i)}
Suppose that $\rho \ge \frac 12$ and $s\in \R$
satisfy one of the following conditions\textup{:}
\begin{align}
\begin{split}
\textup{(i.a)} &\ \  \tfrac 12 \le \rho \le \tfrac 34 \quad \text{and} \quad 
s > \tfrac 32 - 3 \rho, \\
\textup{(i.b)} &\ \  \rho > \tfrac 34\quad  \text{and} \quad  s \ge - \rho.
\end{split}
\label{nonlin1b}
\end{align}

\noi
Then, the modulated KdV equation \eqref{kdv1}
on  $\T$
is 
locally well-posed in $H^s(\T)$
such that the solution $u$ belongs to the class
$ \cD_{w}^s([0, \tau] \times \T)$
defined in Remark \ref{REM:control}\,(i) below, 
where $\tau \in (0, T]$
denotes the local existence time.

\smallskip

\noi
\textup{(ii)}
Suppose that $\rho \ge 1$ and $s\in \R$
satisfy one of the following conditions\textup{:}
\begin{align}
\begin{split}
\textup{(ii.a)} &\ \   \rho =  1 \quad \text{and} \quad s > - \tfrac 12, \\
\textup{(ii.b)} &\ \  \rho >  1 \quad  \text{and} \quad  s \ge - \tfrac 12 \rho.
\end{split}
\label{regBO1}
\end{align}

\noi
Then, the modulated BO equation \eqref{BO}
on  $\T$
is 
locally well-posed in $H^s(\T)$
such that the solution $u$ belongs to the class
$ \cD_{w}^s([0, \tau] \times \T)$
defined in Remark \ref{REM:control}\,(i) below, 
where $\tau \in (0, T]$
denotes the local existence time.

\end{oldtheorem}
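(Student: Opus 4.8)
The plan is to run the nonlinear Young integration scheme described above. Passing to the modulated interaction representation $\uu = \uw(t)^{-1} u(t)$ recasts the modulated KdV as the nonlinear Young differential equation $\uu = u_0 + \I^{X^\KDV}(\uu)$ in~\eqref{YDE0}, with the bilinear driver $X^\KDV$ from~\eqref{K1}. I would then solve this by a contraction mapping argument in a ball of $\CC^\al([0,\tau];H^s(\T))$ for some $0<\al<1$ with $\g+\al>1$, on a short enough time interval $[0,\tau]$. The whole construction rests on two ingredients: first, the germ-level bilinear estimate
\begin{align*}
\|X^\KDV_{t,r}(f_1,f_2)\|_{H^s(\T)} \les |t-r|^\g \, \|f_1\|_{H^s(\T)}\|f_2\|_{H^s(\T)},
\end{align*}
uniformly over $0\le r<t\le T$ (cf.\ Lemma~\ref{LEM:nonlin1}); second, the sewing lemma (Lemma~\ref{LEM:sew}), which promotes this germ bound to a genuine nonlinear Young integral $\I^{X^\KDV}(\uu)$ carrying the same temporal H\"older exponent $\g$ together with the corresponding bilinear operator bound. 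Granting these, the contraction estimate is routine, since the bilinearity and the factor $|t-r|^\g$ yield a contractive Lipschitz constant after shrinking $\tau$.

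The crux is the bilinear estimate. Starting from the Fourier representation~\eqref{K2}, inserting the irregularity bound~\eqref{rho1} with the resonance function~\eqref{K3a} gives
\begin{align*}
|\Phi^w_{t,r}(\Xi_\KDV(\bar n))| \les \jb{n n_1 n_2}^{-\rho}\,|t-r|^\g,
\end{align*}
so that, setting $g_i(n_i)=\jb{n_i}^s|\ft f_i(n_i)|$ (so $\|g_i\|_{\ell^2}=\|f_i\|_{H^s(\T)}$), the $H^s$-norm of $X^\KDV_{t,r}(f_1,f_2)$ is bounded by $|t-r|^\g$ times the $\ell^2_n$-norm of
\begin{align*}
\sum_{\substack{n=n_1+n_2\\ n_1,n_2\in\Z^*}} \frac{\jb{n}^s\,|n|}{\jb{n_1}^s\jb{n_2}^s\jb{n n_1 n_2}^{\rho}}\, g_1(n_1)\,g_2(n_2).
\end{align*}
Thus the entire gain comes from the smoothing weight $\jb{n n_1 n_2}^{-\rho}$ furnished by the irregularity of $w$, which must simultaneously absorb the derivative loss $|n|$ and render the frequency sum summable. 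I would split into the usual frequency regimes (high-high-to-low versus high-low interactions) according to the relative sizes of $|n|,|n_1|,|n_2|$; in each regime the cubic product $|n n_1 n_2|$ is bounded below by a suitable power of the largest frequency, after which the remaining sum is closed by the summation lemma (Lemma~\ref{LEM:SUM}). The two sub-cases in~\eqref{nonlin1b} correspond exactly to this split: the thresholds $s=\tfrac32-3\rho$ and $s=-\rho$ are the two lines whose upper envelope cuts out the admissible region, and they cross precisely at $\rho=\tfrac34$, which is the transition between~(i.a) and~(i.b).

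For the modulated BO equation in part~\textup{(ii)}, the argument is structurally identical; the only change is that the dispersion $-\H\dx^2$ is of second rather than third order, so the BO resonance function $\Xi_\BO(\bar n)$ is homogeneous of degree two and its smoothing weight scales like $\jb{n_1 n_2}^{-\rho}$ away from resonances, one order weaker than the cubic weight $\jb{n n_1 n_2}^{-\rho}$ of KdV. This weaker gain is why BO demands the more irregular range $\rho\ge1$ and yields the endpoint threshold $s\ge-\tfrac12\rho$ in~\eqref{regBO1} in place of $s\ge-\rho$. I expect the main obstacle to be closing the bilinear estimate at the endpoints (the closed conditions in~\eqref{nonlin1b} and~\eqref{regBO1}), where the frequency sum is borderline; for BO this is compounded by near-resonant interactions, in which $\Xi_\BO(\bar n)$ can be small and the smoothing weight degenerates, so a careful lower bound on the resonance over each interaction region, fed into Lemma~\ref{LEM:SUM}, is essential.
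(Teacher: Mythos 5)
Your proposal is correct and follows essentially the same route the paper takes for this recalled result: pass to the modulated interaction representation, establish the bilinear driver bound $\|X_{t,r}\|_{\L_2(H^s;H^s)}\les |t-r|^\g\|\Phi^w\|_{\W^{\rho,\g}_T}$ via \eqref{rho1}, the resonance function, and Lemma~\ref{LEM:SUM}, then build the nonlinear Young integral with the sewing lemma and contract in $\CC^\al([0,\tau];H^s)$ with $\g+\al>1$ (this is exactly the scheme of \cite{CGLLO1} summarized in Subsection~\ref{SUBSEC:YDE}). The only loose phrasing is calling the BO interactions ``near-resonant'': since $|\Xi_\BO(\bar n)|$ equals twice the product of the two smallest frequencies, it is bounded below by $n_{\max}$ and never degenerates — the point is merely that this gain is one power weaker than the KdV weight $|n n_1 n_2|$, which is what forces $\rho\ge 1$ and the threshold $s\ge-\tfrac12\rho$.
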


Theorem \ref{THM:1}
established regularization by noise
in several respects.
First of all, by taking $\rho$ sufficiently large, 
these  modulated equations
become locally well-posed
in $H^s(\T)$ for
any given $s \in \R$, 
even in the regime
where the unmodulated versions
are ill-posed.
See
\cite{KPV96, CKSTT03, 
KT1, M12,  KV19}
and 
\cite{Moli1, Moli2, 
Deng15, 
GKT23, KLV}
for the known well-\,/\,ill-posedness
results
for the (unmodulated) KdV equation:
\begin{equation}
\dt u+  \dx^3 u  =\dx u^2
\label{kdv2}
\end{equation}

\noi
and 
the (unmodulated) BO equation:
\begin{equation}
\label{BO2}
\dt u-    \H\dx^2 u  =\dx u^2,
\end{equation}

\noi
respectively.
The second point is the 
 semilinearization phenomenon.
The unmodulated 
KdV~\eqref{kdv2}
in $H^s(\T)$ for $s <  - \frac 12$
and
the unmodulated BO \eqref{BO2}
in $H^s(\T)$
 for  any $s \in \R$
are known to behave quasilinearly
such that 
the associated solution map 
is not smooth with respect to the $H^s$-topology;
see  \cite{BO97, MST, CCT,  GKT24}.
On the other hand, 
the construction of solutions in 
Theorem \ref{THM:1} 
is 
 based on a contraction argument
 such that the associated solution map
 is smooth (in fact, real analytic), 
thus  semilinearizing 
these equations
by adding  the modulation.

\begin{remark}\label{REM:control}\rm

(i)
Let $w$ be as in Theorem \ref{THM:1}.
Given $\tau > 0$, we
denote by 
\begin{align*}
 \cD_{w}^s([0, \tau] \times \T) \subset C([0, \tau];H^s(\T))
\end{align*}

\noi
the space of {\it paths controlled by $w$}, 
which are 
 paths $u \in C([0, \tau];H^s(\T))$ 
such that 
their modulated interaction representations 
$\uu(t)=\uw(t)^{-1}u(t)$
defined in \eqref{int1}
belong
to  $\CC^{\g}([0, \tau];H^s(\T))$; see~\eqref{Ho1}
for the definition of the $\CC^\g$-norm.
Then, 
uniqueness of solutions constructed in Theorem \ref{THM:1}
holds 
 in the class $\cD_{w}^s([0, \tau] \times \T)$, 
 where 
  $0 < \tau \le  T$ denotes the local existence time.
Namely, the uniqueness
does {\it not} hold
in the entire class 
$C([0, \tau];H^s(\T))$ (for $u$);
see \cite[Remark 1.4]{CGLLO1}.

\smallskip

\noi
(ii)
For $s < 0$, the quadratic nonlinearity $\dx u^2(t)$ does not make 
sense for fixed $t \ge0$.
As in the Fourier restriction norm method due to Bourgain \cite{BO93}
in the unmodulated case (namely, $w(t) = t$), 
the main point is to make sense of 
$\dx u^2$ as a space-time distribution
(and not for each fixed $t \ge 0$).
More precisely, in Theorem \ref{THM:1}\,(i), 
we give a meaning to  the nonlinearity (at the level 
of the modulated interaction representation $\uu$)
by making sense of 
the integral term
in 
\eqref{mild3x}
as a nonlinear Young integral
$\I^{X^\KDV}(\uu) $
as in \eqref{YZ1}
via the sewing lemma and \eqref{YQ6}, 
which requires temporal regularity $\g > \frac 12$
for $\uu$.
The main difficulty in proving unconditional uniqueness
comes from the
fact that we can {\it not} assume any positive temporal regularity
for~$\uu$.

\smallskip

\noi
(iii)
In proving Theorem \ref{THM:1}\,(i), 
we {\it a priori} assume that 
the modulated interaction representation $\uu$ of a solution 
belongs to $\CC^\g([0, \tau]; H^s(\T))$ for $\g > \frac 12$.
Noting that 
the modulated interaction representation  of 
any linear solution to the modulated KdV \eqref{kdv1}
is a constant-in-time function, belonging
to $\CC^\g([0, \tau]; H^s(\T))$ for any $\g > 0$, 
we may say that 
the a priori assumption
that 
$\uu \in \CC^\g([0, \tau]; H^s(\T))$ for $\g > \frac 12$
in proving Theorem \ref{THM:1}\,(i)
imposes that our solution be close to being a linear solution
just as in the Fourier restriction norm method due to Bourgain \cite{BO93}
in the unmodulated case (namely, $w(t) = t$);
see also \cite{Robert2, Robert3}
for the Fourier restriction norm method approach
to modulated dispersive PDEs.
Compare this with the situation in Theorem \ref{THM:UU1}\,(i)
for unconditional uniqueness, 
where we only assume that 
$\uu \in C([0, \tau]; H^s(\T))$
without further (temporal) regularity.

\end{remark}

\begin{remark}\rm 
In a recent work \cite{CGLLO2}, we studied
pathwise well-posedness of stochastic modulated dispersive equations
with multiplicative noises
and established a new 
regularization-by-noise phenomenon by exploiting the nonlinear interaction between the unknown and the noise.
Our strategy was 
to combine
the nonlinear Young integral approach in \cite{CGLLO1}
described above
with a suitable adaptation of 
the pathwise construction
of  stochastic convolutions
via 
the random tensor estimate (see \cite{DNY3, OWZ, OW3}) 
and the sewing lemma, 
which is a strategy
developed in \cite{CLO2, COZ}
to study
 stochastic (unmodulated) dispersive equations with multiplicative noises;
see also \cite {OS2}.
See \cite{CGLLO2} for details.

\end{remark}

\subsection{Unconditional uniqueness
of modulated dispersive PDEs}
\label{SUBSEC:UU}

In this subsection, 
we state our main results
on
unconditional well-posedness
(and also on nonlinear smoothing)
for modulated dispersive equations on the circle. 
Our first theorem is for the modulated KdV~\eqref{kdv1} on~$\T$.

We say that $u$
is a solution to the modulated KdV \eqref{kdv1} on $\T$, 
if it satisfies the following Duhamel formulation:
\begin{equation}
u(t) = U^w(t) u_0 +  U^w({t}) \int_0^t  U^w({t'})^{-1}\dx u^2(t')  dt'.
\label{mild1x}
\end{equation}

\noi
The same comment applies
to the other modulated dispersive equations considered in this paper.

\begin{theorem}\label{THM:UU1}
Given $\rho > \frac 54$,  $\frac12< \g < 1$, and $T> 0$, 
let  $w$ be $(\rho,\g)$-irregular on $[0, T]$ in the sense of Definition~\ref{DEF:ir}.

\smallskip

\noi
\textup{(i) (unconditional well-posedness).} 
Let   $s\ge 0$.
Then, the modulated KdV equation~\eqref{kdv1}
on  $\T$
is unconditionally locally well-posed in $H^s(\T)$.
In particular, 
if, in addition, $w$ is  $(\rho,\g)$-irregular  on $\R_+$, 
then the modulated KdV equation \eqref{kdv1}
on  $\T$
is unconditionally globally well-posed in $H^s(\T)$.

\smallskip

\noi
\textup{(ii) (nonlinear smoothing).} 
Suppose that 
$s, s_0 \in \R$ with 
$s_0 > s\ge 0$   satisfy
\begin{align*}
s_0 < s + 2\rho - \frac 52.
\end{align*}

\noi
Given $u_0 \in H^s(\T)$, 
let $u
\in C([0, \tau]; H^s(\T))$ be the  solution 
to the modulated KdV equation~\eqref{kdv1}
on $\T$
with $u|_{t = 0} = u_0$
constructed in Part (i), 
where 
$\tau \in (0,  T]$ denotes the local existence time.
Then, we have 
\begin{align*}
 u - e^{-w(t) \dx^3}u_0 \in C([0, \tau]; H^{s_0}(\T)).
\end{align*}

\end{theorem}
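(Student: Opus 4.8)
The plan is to prove both parts simultaneously by setting up the normal form (integration-by-parts in time) machinery on the modulated interaction representation $\uu(t) = \uw(t)^{-1} u(t)$, which satisfies \eqref{mild3x}. The crucial structural input is the factorization \eqref{K2}: in Fourier variables each contribution to the driver carries the factor $\Phi^w_{t,r}(\Xi_\KDV(\bar n))$ with $\Xi_\KDV(\bar n) = -3n n_1 n_2$, and the $(\rho,\g)$-irregularity \eqref{rho1} gives $|\Phi^w_{t,r}(\Xi_\KDV(\bar n))| \lesssim \langle n n_1 n_2 \rangle^{-\rho} |t-r|^\g$. In the classical sewing-lemma approach one uses the full power $|t-r|^\g$ with $\g > \frac12$; here, instead, I would reinterpret $\Phi^w$ as an \emph{antiderivative in time} and perform a normal form reduction: write $\frac{d}{dt'}\Phi^w_{t',r}(a) = e^{ia w(t')}$ and integrate by parts in $t'$ inside the Duhamel integral, transferring one time-derivative onto $\uu$ via the equation itself. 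This is the ``controlled-path'' idea alluded to in the introduction, and it is precisely what lets one dispense with \emph{any} positive temporal regularity of $\uu$.

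Concretely, I would first define a normal-form-transformed variable, schematically $\uu(t) = u_0 + \NN_2(\uu)(t) + (\text{higher-order remainder})$, where $\NN_2$ is the boundary term produced by integrating by parts once in the Duhamel integral and the remainder absorbs the substitution $\dt \uu = \uw^{-1}\dx((\uw \uu)^2)$ from \eqref{kdv1a} into the integrand. In Fourier space the gain from one integration by parts is a factor $\langle \Xi_\KDV(\bar n)\rangle^{-1} = \langle n n_1 n_2\rangle^{-1}$, combined with the decay $\langle n n_1 n_2\rangle^{-\rho}$ already present in $\Phi^w$. The arithmetic of how many derivatives $\dx$ can be absorbed against this combined decay $\langle n n_1 n_2\rangle^{-(\rho + \text{gain})}$, summed via Lemma~\ref{LEM:SUM}, is exactly what produces the threshold $\rho > \frac54$ for well-posedness and the gain $s_0 < s + 2\rho - \frac52$ for nanlinear smoothing; the ``$2\rho$'' reflects that the boundary and remainder terms each carry their own $\Phi^w$ factor contributing $\rho$, while the ``$-\frac52$'' accounts for the derivative loss in $\dx u^2$ together with the single integration by parts. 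For Part~(i) one checks that the resulting multilinear map is a contraction on $C([0,\tau];H^s(\T))$ for $s \ge 0$ (no auxiliary Hölder-in-time norm), and global well-posedness then follows by iterating using conservation of the $L^2$ norm (as in \cite[Section~6]{CGLLO1}) once $s \ge 0$. For Part~(ii), since every nonlinear term in the normal form expansion already lives in a higher regularity $H^{s_0}(\T)$ with $s_0$ up to $s + 2\rho - \frac52$, one reads off directly that $\uu(t) - u_0 = u(t) - e^{-w(t)\dx^3}u_0 \in C([0,\tau];H^{s_0}(\T))$.

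The technical heart, and the step I expect to be the main obstacle, is making the normal form reduction rigorous \emph{without} any positive temporal regularity of $\uu$: the boundary terms of the integration by parts involve $\Phi^w_{t,r}$ evaluated at the endpoints, and controlling these, along with the new time-integral remainder (which now contains a further copy of the nonlinearity and hence a trilinear-in-$\uu$ structure), requires careful multilinear estimates summed over the resonant frequency relation $n = n_1 + n_2$. One must verify that \emph{no} small-divisor or resonance degeneracy occurs when $n_1 n_2 = 0$ is excluded (the mean-zero assumption and $\Z^* = \Z\setminus\{0\}$ in \eqref{K2} are exactly designed for this) and that the iterated integration-by-parts remainder converges. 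In particular one has to track the interplay between the decay $\langle n n_1 n_2\rangle^{-\rho}$ from irregularity and the derivative counting in $H^s$, and confirm the endpoint case $s = 0$ is admissible.

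The second obstacle is \emph{uniqueness} in the full class $C([0,\tau];H^s(\T))$: one assumes two solutions $u,v$ with the same data, forms $\uu - \vv$, and must close a difference estimate using only continuity in time. The point is that the normal form representation applies to \emph{any} solution in $C([0,\tau];H^s(\T))$ — not merely those constructed by the contraction — because the integration by parts uses only the equation \eqref{kdv1a}, not a priori Hölder regularity. Thus the difference $\uu - \vv$ satisfies a closed multilinear fixed-point-type estimate on a short interval, forcing $\uu \equiv \vv$. I would prove a short-time contraction for the difference and then cover $[0,\tau]$ by finitely many such intervals; the delicate part is ensuring the normal-form identity is valid pointwise in time for an arbitrary continuous-in-time solution, which I expect to handle by a density or approximation argument in the time variable combined with the uniform multilinear bounds established above.
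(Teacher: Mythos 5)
Your overall strategy is the paper's: a single normal form reduction driven by the identity $\partial_{t'}\Phi^w_{t,t'}(\Xi_\KDV(\bar n)) = -e^{i\Xi_\KDV(\bar n)w(t')}$, producing the boundary term $X^\KDV_{t,0}(u_0)$ and the trilinear remainder $\int_0^t \NN^\KDV_{t,t'}(\uu(t'))\,dt'$, multilinear estimates via Lemma~\ref{LEM:SUM}, a contraction in $C([0,\tau];H^s(\T))$, and the key observation that the normal form identity holds for \emph{every} solution in $C([0,\tau];L^2(\T))$, which is what yields unconditional uniqueness and, for Part~(ii), the smoothing exponent $s_0<s+2\rho-\tfrac52$. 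There is, however, one genuine gap on the existence side. Your contraction produces a fixed point of the normal form equation \eqref{NF4a}, and your uniqueness argument establishes the implication ``solution of the Duhamel formulation \eqref{mild1x} $\Rightarrow$ solution of \eqref{NF4a}'' --- but that implication points the wrong way for existence: nothing in your proposal shows that the normal form fixed point actually satisfies \eqref{mild1x}. The paper closes this loop by taking the solution $v\in C^\g([0,T];H^s(\T))$ already constructed in \cite{CGLLO1}, verifying that $v$ satisfies \eqref{NF4a}, and invoking the Lipschitz bound \eqref{NF4c} for the normal form equation to conclude that the fixed point coincides with $v$ and hence solves the original modulated KdV. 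You need this step (or an argument reversing the integration by parts for the normal form solution), and it is not supplied.

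Two further remarks. Your proposed ``density or approximation argument in the time variable'' to validate the normal form identity for an arbitrary continuous-in-time solution is vaguer than necessary: the paper's justification is direct. For $u\in C([0,\tau];L^2(\T))$, equation \eqref{kdv1b} gives $|\dt\ft\uu(t,n)|\le|n|\,\|u(t)\|_{L^2}^2$, so each Fourier mode $\ft\uu(\cdot,n)$ is $C^1$ in time; the product rule and the integration by parts are then performed mode by mode, and Fubini (using $\sum_{n=n_1+n_2}|\ft\uu(n_1)\ft\uu(n_2)|\le\|\uu\|_{L^2}^2$ for fixed $n$) justifies interchanging the sum and the time integral --- no approximation is needed. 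Finally, a bookkeeping point: the factor $2\rho$ in the smoothing exponent does not arise because ``the boundary and remainder terms each carry their own $\Phi^w$ factor'' --- the phase $e^{i\Xi_\KDV(n_{12},n_1,n_2)w(r)}$ in \eqref{nox1} has modulus one and contributes no decay. It comes from the single factor $\jb{\Xi_\KDV(n,n_{12},n_3)}^{-\rho}$ with $\Xi_\KDV(n,n_{12},n_3)=-3nn_{12}n_3$ distributing $\rho$ to each of the three frequencies, two of which survive the derivative losses after summation; this is also what forces $\rho>\tfrac54$ at $s_0=s=0$.
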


Theorem \ref{THM:UU1}\,(i) establishes the first unconditional uniqueness
result for modulated dispersive PDEs.
As a corollary to Theorem \ref{THM:UU1}\,(i)
with Theorem \ref{THM:A}, we have
the following unconditional well-posedness
result for the modulated KdV \eqref{kdv1} on $\T$, 
where a modulation is given by a fractional Brownian motion.

\begin{corollary}\label{COR:1}
Let $\{W_t\}_{t\in \R_+}$ be a fractional Brownian motion 
with Hurst index $0 < H < \frac 25$.
Then, 
with probability one, 
the modulated KdV equation \eqref{kdv1}
on  $\T$
with the modulation given by 
a sample path of the
 fractional Brownian motion 
 $\{W_t\}_{t\in \R_+}$
is unconditionally globally well-posed in $H^s(\T)$
for any $s \ge 0$.

\end{corollary}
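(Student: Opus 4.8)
The plan is to obtain the corollary directly by combining the unconditional global well-posedness statement in Theorem~\ref{THM:UU1}\,(i) with the almost-sure irregularity of fractional Brownian paths recorded in Theorem~\ref{THM:A}. The only genuine content is a threshold-matching: one must check that the Hurst-index range $0 < H < \frac 25$ is precisely what is needed to produce an admissible irregularity exponent $\rho$ lying strictly above the value $\frac 54$ demanded by Theorem~\ref{THM:UU1}. Concretely, I would first record the elementary observation that, since $0 < H < \frac 25$, we have $\frac{1}{2H} > \frac{1}{2\cdot \frac 25} = \frac 54$, so that the open interval $\big(\frac 54, \frac{1}{2H}\big)$ is nonempty; fix any $\rho$ in this interval, so that simultaneously $\rho > \frac 54$ and $\rho < \frac{1}{2H}$.

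Next, applying Theorem~\ref{THM:A} with this choice of $\rho$ (which satisfies the hypothesis $\rho < \frac{1}{2H}$), I obtain some exponent $\frac 12 < \g < 1$ for which, with probability one, almost every sample path of the fractional Brownian motion $\{W_t\}_{t\in\R_+}$ is $(\rho,\g)$-irregular on $\R_+$ in the sense of Definition~\ref{DEF:ir}. Note that this furnishes irregularity on all of $\R_+$, i.e.~on $[0,T]$ for every finite $T>0$, which is exactly the hypothesis required by the global part of Theorem~\ref{THM:UU1}\,(i).

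Finally, I would fix such a sample path $w = W(\omega)$ on the probability-one event just produced. Since $w$ is $(\rho,\g)$-irregular on $\R_+$ with $\rho > \frac 54$ and $\frac 12 < \g < 1$, the global statement of Theorem~\ref{THM:UU1}\,(i) applies verbatim and yields that the modulated KdV equation~\eqref{kdv1} on $\T$ with modulation $w$ is unconditionally globally well-posed in $H^s(\T)$ for every $s \ge 0$. As this conclusion holds for every $\omega$ in the probability-one event supplied by Theorem~\ref{THM:A}, the corollary follows.

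There is essentially no analytic obstacle here, since all of the difficulty has already been absorbed into the proof of Theorem~\ref{THM:UU1}; the single point requiring care is the strictness of the threshold. Because Theorem~\ref{THM:UU1} requires $\rho > \frac 54$ (a strict inequality) and Theorem~\ref{THM:A} produces $\rho$ only strictly below $\frac{1}{2H}$, the matching works precisely when $\frac{1}{2H} > \frac 54$, i.e.~when $H < \frac 25$. The endpoint $H = \frac 25$ is genuinely excluded: there Theorem~\ref{THM:A} delivers $\rho$ only up to (but not including) $\frac 54$, which is insufficient to invoke Theorem~\ref{THM:UU1}. Thus the only verification needed is this exponent bookkeeping.
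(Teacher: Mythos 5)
Your proposal is correct and is exactly the argument the paper intends (the paper states the corollary as an immediate consequence of Theorem~\ref{THM:UU1}\,(i) and Theorem~\ref{THM:A} without writing out the details): since $H < \frac25$ gives $\frac1{2H} > \frac54$, one picks $\rho \in \big(\frac54, \frac1{2H}\big)$, invokes Theorem~\ref{THM:A} to get almost-sure $(\rho,\g)$-irregularity on $\R_+$, and applies the global part of Theorem~\ref{THM:UU1}\,(i). The exponent bookkeeping, including the exclusion of the endpoint $H=\frac25$, is exactly right.
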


\begin{remark}\rm
(i)
Similar results hold for each of the modulated dispersive equations considered in this paper, with the appropriate choice of exponents, when the modulation function is given by a fractional Brownian motion, or,  thanks to Theorem \ref{THM:B}, when the modulation function is given by a generic (in the sense of prevalence) H\"older function.

\smallskip

\noi
(ii)  Theorem \ref{THM:UU1}
holds only for $\rho > \frac 54$
(and thus for $H < \frac 25$) and hence is not applicable
to the case when a modulation $w$ is a Brownian motion
(corresponding to $H= \frac 12$)
which would require us to consider $\rho < 1$
in view of Theorem \ref{THM:A}.
When $\rho < 1$, 
by considering the contribution $|n| \sim |n_1| \gg |n_2|, |n_3|$
in \eqref{nox3}, 
it is easy to see that the crucial trilinear estimate
(Lemma  \ref{LEM:nonlin2} for any  $s = s_0 \in \R$)
fails
and thus our normal form approach for the modulated KdV \eqref{kdv1}
completely breaks down.

\end{remark}

\begin{remark}\label{REM:sharp1} \rm
(i)
In view of  the quadratic nonlinearity in \eqref{kdv1}, 
the regularity threshold $s \ge 0$ 
in Theorem~\ref{THM:UU1}
is {\it sharp} since the $L^2$-regularity is required to make
sense of $\dx u^2(t)$ as a spatial distribution for given $t \ge 0$
({\it without} intersecting with an auxiliary function space).
Namely, if we {\it only} assume that
\[ \uu \in C([0, \tau]; H^s(\T))\]

\noi
without further (temporal) regularity, 
the regularity $s = 0$ is the lowest possible value 
in making sense of the quadratic nonlinearity.
The same comment on sharpness of the $L^2$-regularity
also applies to the other modulated dispersive equations
considered in this paper.
We note that, in order to make sense of the nonlinearity $\dx u^2$
in $H^s(\T)$ for $s < 0$, we need to 
assume
some temporal regularity on $u$
(or rather its modulated interaction representation $\uu$)
and give a meaning to $\dx u^2$ as a space-time distribution
(in particular, $\dx u^2(t)$ does not make sense
for fixed $t \ge 0$)
just as in the Fourier restriction norm method due to Bourgain \cite{BO93}
in the unmodulated case (namely, $w(t) = t$)
or as in Theorem \ref{THM:1}, 
where we need to {\it a priori} assume that $\uu \in \CC^\g([0, \tau]; H^s(\T))$; 
see Remark \ref{REM:control}.

\smallskip

\noi
(ii)
We note that 
the condition $\rho > \frac 54$ in Theorem \ref{THM:UU1}\,(i)
is more restrictive than that in Theorem \ref{THM:1}\,(i).
This is due to the fact that, 
in Theorem \ref{THM:UU1}\,(i), 
we only assume  that
$\uu \in C([0, \tau]; H^s(\T))$, 
which is essential in proving unconditional uniqueness, 
while, in  Theorem~\ref{THM:1}\,(i), we assume that 
$\uu$ has additional temporal regularity
(namely, 
$\uu \in \CC^\g([0, \tau]; H^s(\T))$), 
which only yields conditional uniqueness.

Similarly, while the condition on $\rho$, $s$, and $s_0$
in Theorem \ref{THM:UU1}\,(ii) is more restrictive than
the corresponding result
on the nonlinear smoothing in \cite[Theorem 1.15]{CGLLO1}, 
our argument does not rely
on any auxiliary function space such as 
$\cC^\g([0, \tau]; H^s(\T))$
(even for the modulated interaction representation).
\end{remark}

We prove Theorem \ref{THM:UU1}
by implementing a normal form reduction
in the current modulated setting.
In a pioneering work \cite{BIT}, 
Babin, Ilyn, and Titi
implemented  a normal form argument
for the (unmodulated) KdV \eqref{kdv2}
and proved  its unconditional well-posedness 
in $L^2(\T)$.\footnote{In \cite{BIT}, 
Babin, Ilyn, and Titi did not make a connection
of their construction of solutions via ``differentiation by parts''
with 
normal form reductions.
Such a connection was made in an explicit manner
in a later work~\cite{GKO}.}
We point out that, while 
two normal form reductions
were needed in the unmodulated setting~\cite{BIT}, 
we  need to perform a normal form reduction
{\it only once} to prove Theorem \ref{THM:UU1}
for the modulated KdV
thanks to  the presence of the $(\rho, \g)$-irregular modulation function $w$,
which can be viewed as a manifestation
of regularization by noise.
In Subsection \ref{SUBSEC:NF}, we go over basic steps of
the normal form reduction.
We present a proof of Theorem \ref{THM:UU1}
in Section \ref{SEC:KDV}.

\medskip

Next, we state our main  result 
for the modulated BO \eqref{BO} on $\T$.

\begin{theorem}\label{THM:UU2}
Given $\rho > \frac 52$,  $\frac12< \g < 1$, and $T> 0$, 
let  $w$ be $(\rho,\g)$-irregular on $[0, T]$ in the sense of Definition~\ref{DEF:ir}.

\smallskip

\noi
\textup{(i) (unconditional well-posedness).} 
Let   $s\ge 0$.
Then, the modulated BO equation~\eqref{BO}
on  $\T$
is unconditionally locally well-posed in $H^s(\T)$.
In particular, 
if, in addition, $w$ is  $(\rho,\g)$-irregular  on $\R_+$, 
then the modulated BO equation \eqref{BO}
on  $\T$
is unconditionally globally well-posed in $H^s(\T)$.

\smallskip

\noi
\textup{(ii) (nonlinear smoothing).} 
Suppose that 
$s, s_0 \in \R$ with 
$s_0 > s\ge 0$   satisfy
\begin{align*}
s_0 \le  \rho - 1
\qquad \text{and}\qquad 
s_0 < s + \rho - \frac 52.
\end{align*}

\noi
Given $u_0 \in H^s(\T)$, 
let $u
\in C([0, \tau]; H^s(\T))$ be the  solution 
to the modulated BO equation~\eqref{BO}
on $\T$
with $u|_{t = 0} = u_0$
constructed in Part (i), 
where 
$\tau \in (0,  T]$ denotes the local existence time.
Then, we have 
\begin{align*}
 u - e^{w(t) \H\dx^2}u_0 \in C([0, \tau]; H^{s_0}(\T)).
\end{align*}

\end{theorem}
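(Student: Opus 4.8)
The plan is to transplant the single normal form reduction used for the modulated KdV in Theorem~\ref{THM:UU1} to the modulated BO, the only genuinely new ingredient being the BO resonance function. Passing to the modulated interaction representation $\uu(t) = \uw(t)^{-1}u(t)$ with $\uw(t) = e^{w(t)\H\dx^2}$, the Duhamel formulation~\eqref{mild1x} for BO becomes
\begin{equation*}
\uu(t) = u_0 + \int_0^t \uw(t')^{-1}\dx\big((\uw(t')\uu(t'))^2\big)\,dt',
\end{equation*}
and on the Fourier side the quadratic driver carries the phase $\Phi^w_{t,r}(\Xi_{\BO}(\bar n))$ with
\begin{equation*}
\Xi_{\BO}(\bar n) = -|n|n + |n_1|n_1 + |n_2|n_2, \qquad n = n_1 + n_2.
\end{equation*}
Since $\uw(t)$ is unitary on every $H^\sigma(\T)$, it suffices to control the increment $\uu(t) - u_0$ in $H^{s_0}(\T)$ for the admissible $s_0$, with $s_0 = s$ giving Part~(i) and $s_0 > s$ giving Part~(ii). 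The key elementary input is the lower bound $|\Xi_{\BO}(\bar n)| \ges n_{\min}n_{\max}$, where $n_{\min} = \min(|n|,|n_1|,|n_2|)$ and $n_{\max} = \max(|n|,|n_1|,|n_2|)$; this is the BO counterpart of $|\Xi_{\KDV}| \sim |n||n_1||n_2|$, and its weaker growth --- quadratic when the frequencies are balanced, but only linear in the high--low regime --- is exactly what forces the larger threshold $\rho > \frac 52$.

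I would then perform the normal form reduction once. Viewing $\Phi^w_{t',0}(\Xi_{\BO}(\bar n))$ as the $t'$-antiderivative of the oscillatory factor $e^{iw(t')\Xi_{\BO}(\bar n)}$ and integrating by parts in time, the Duhamel integral splits, on each output frequency $n$, into a boundary term
\begin{equation*}
in \sum_{n=n_1+n_2} \Phi^w_{t,0}(\Xi_{\BO}(\bar n))\, \ft{\uu}(t,n_1)\ft{\uu}(t,n_2)
\end{equation*}
and a trilinear term $-\int_0^t \Phi^w_{t',0}(\Xi_{\BO}(\bar n))\,\dt\big[\ft{\uu}(t',n_1)\ft{\uu}(t',n_2)\big]\,dt'$ (summed against $in$), in which the equation is substituted for each $\dt\ft{\uu}(t',n_j)$. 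Crucially, $(\rho,\g)$-irregularity supplies the smoothing bound $|\Phi^w_{t,r}(\Xi_{\BO}(\bar n))| \les \jb{\Xi_{\BO}(\bar n)}^{-\rho}|t-r|^\g \les (n_{\min}n_{\max})^{-\rho}|t-r|^\g$, which plays the role of the factor $\Xi_{\BO}(\bar n)^{-1}$ produced by an ordinary linear-phase normal form. As for KdV, a single reduction suffices because this gain is already strong enough; no second normal form is required.

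The two resulting multilinear estimates are the heart of the matter. For the boundary term I would prove
\begin{equation*}
\Big\| in\sum_{n=n_1+n_2}\Phi^w_{t,0}(\Xi_{\BO}(\bar n))\,\ft{f}(n_1)\ft{g}(n_2)\Big\|_{H^{s_0}} \les \tau^\g \|f\|_{H^s}\|g\|_{H^s},
\end{equation*}
whose symbol $\jb n^{s_0}|n|(n_{\min}n_{\max})^{-\rho}$ is worst in the high--low configuration ($n_{\min}=O(1)$, $n\sim n_{\max}$), yielding the constraint $s_0 \le \rho - 1$, while summability in the low frequency uses $\rho > 1$. For the trilinear term the substituted nonlinearity introduces a further $\dx$ (hence an extra derivative factor $|n_1|$) and one more frequency summation, and the inner oscillation is simply discarded; its symbol is $\jb n^{s_0}|n||n_1|(n_{\min}n_{\max})^{-\rho}$ and the time integration supplies a further power of $\tau$. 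Balancing the two derivative losses against the single resonance gain across all interactions produces $s_0 < s + \tfrac{}{}\rho - \tfrac 52$. Taking $s_0 = s$ and using that in this diagonal case the high-frequency inputs carry their own matching weights, both estimates close for every $s \ge 0$ once $\rho > \frac 52$ (Part~(i)), whereas keeping $s_0 > s$ free gives the smoothing range of Part~(ii). I expect this trilinear estimate, precisely in the unbalanced regimes where $\Xi_{\BO}$ degenerates to a single power of the frequency, to be the main obstacle.

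Finally, I would close the contraction and uniqueness argument entirely inside $C([0,\tau];H^s(\T))$. The normal form identity $\uu = u_0 + \Gamma(\uu)$, with $\Gamma$ the sum of the boundary and trilinear operators, holds for \emph{any} solution $\uu \in C([0,\tau];H^s(\T))$: for $s\ge0$ each coefficient $\ft{\uu}(t,n)$ is continuous in $t$, and so is $\dt\ft{\uu}(t,n) = in\sum_{n=n_1+n_2}e^{iw(t)\Xi_{\BO}(\bar n)}\ft{\uu}(t,n_1)\ft{\uu}(t,n_2)$ (the sum converging absolutely by Cauchy--Schwarz), so the integration by parts is legitimate coefficientwise with no auxiliary temporal regularity assumed. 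The estimates above make $\Gamma$ a contraction on a ball of $C([0,\tau];H^s(\T))$ for $\tau$ small, the powers of $\tau$ furnishing the smallness; this yields existence and uniqueness simultaneously in the full space $C([0,\tau];H^s(\T))$, i.e.\ unconditional local well-posedness, while the same representation with $s_0 > s$ gives the nonlinear smoothing of Part~(ii). Global well-posedness then follows by iterating the local argument, using conservation of the $L^2$-norm together with the persistence of higher Sobolev regularity.
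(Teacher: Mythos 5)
Your proposal follows essentially the same route as the paper: one normal form reduction via $\partial_{t'}\Phi^w_{t,t'}(\Xi_\BO(\bar n)) = -e^{i\Xi_\BO(\bar n)w(t')}$, producing the bilinear boundary operator $X^\BO_{t,0}$ and the trilinear operator $\NN^\BO_{t,t'}$, followed by a contraction in $C([0,\tau];H^s(\T))$. Your lower bound $|\Xi_\BO(\bar n)|\ges n_{\min}n_{\max}$ is a compact restatement of the paper's five-case sign analysis (which gives $|\Xi_\BO(n,n_{12},n_3)| = 2|n_{12}n_3|$, $2|nn_3|$, or $2|nn_{12}|$ according to the sign pattern), and your two constraints $s_0\le\rho-1$ (bilinear, high--low) and $s_0<s+\rho-\frac52$ (trilinear) are exactly those of Lemmas~\ref{LEM:nonlin3} and~\ref{LEM:nonlin4}; the additional hypotheses $s+\rho\ge1$ and $2s+\rho>\frac32$ in the trilinear lemma are automatic for $s\ge0$, $\rho>\frac52$. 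The justification of the coefficientwise integration by parts is also as in the paper.

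There is, however, one genuine logical gap at the end. The contraction produces the unique fixed point of the normal form map $\Gamma$, and the forward implication (any $C([0,\tau];H^s)$ solution of the Duhamel formulation satisfies the normal form equation) gives \emph{uniqueness}; but it does not by itself give \emph{existence} of a solution to the original modulated BO, because you have not shown that the fixed point of $\Gamma$ satisfies the Duhamel formulation --- the normal form reduction is only run in one direction. The paper closes this by taking the solution $v$ already constructed via the sewing lemma in the earlier work, observing that $v$ also satisfies the normal form equation, and concluding $v\equiv\uu$ from the Lipschitz dependence of the fixed point on the data. Relatedly, your globalization via ``$L^2$ conservation plus persistence of regularity'' would itself require justifying $L^2$ conservation for merely continuous-in-time $L^2$ solutions; the paper instead iterates the local unconditional uniqueness against the global solution of the earlier work. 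Both points are repairable, but as written the existence half of well-posedness is unproved.
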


%

For the unmodulated BO \eqref{BO2}, 
Mo\c{s}incat and Pilod
proved
its unconditional well-posedness
in $H^s(\T)$ for $s >  3 - \sqrt{\frac {33}4} \approx 0.128$;
see also \cite{Ki3}.
Thus, Theorem \ref{THM:UU2}\,(i)
exhibits a strong regularizing effect in the modulated setting, 
regarding  unconditional well-posedness.
Furthermore, as in the modulated KdV case, 
we only need to carry out a normal form reduction
once for the modulated BO \eqref{BO}, whereas, for the unmodulated BO \eqref{BO2}, 
one first needs to apply a gauge transform 
and then carry out normal form reductions twice.
This  can be once again viewed as a manifestation
of regularization by noise.

We present a proof of Theorem \ref{THM:UU2}
in Section \ref{SEC:BO}.

\begin{remark}\rm
Given $0 < \dl < \infty$, 
 consider the 
modulated 
intermediate long wave equation (ILW) on the circle: 
\begin{equation}
\label{ILW1}
\dt u-    \Gdl\dx^2 u \cdot \dt w =\dx u^2, 
\end{equation}

\noi
where the dispersion operator~$\Gdl \dx^2$ is given 
by 
\begin{align*}
\ft{\Gdl \dx^2 f}(n) =
i p_\dl(n)  \ft f(n)
:=
 \ind_{n\ne 0}\cdot i \bigg(n^2 \coth(\dl n)  - \frac{n}{\dl }\bigg) \ft{f}(n), \quad n\in\Z.
\end{align*}

\noi
See
\cite{S19, Li24, LOZ, CLOP, CFLOP, FLZ, CLO}
for the known well-\,/\,ill-posedness
results
for the (unmodulated) ILW:
\begin{equation*}
\dt u-    \Gdl\dx^2 u  =\dx u^2.
\end{equation*}

Recall from  \cite[(5.22)]{CGLLO1}
that  the resonance function for ILW 
given by 
\begin{align*}
\Xi_\dl (\bar n)
& = \Xi_\dl(n,n_1,n_2) = - p_\dl(n)  +   p_\dl(n_1) +  p_\dl(n_2)
\end{align*}

\noi
satisfies 
\begin{align}
\Xi_\dl(\bar n) =  \Xi_\BO(\bar n) + O(\dl^{-2}), 
\label{IL8}
\end{align}

\noi
where
$\Xi_\BO(\bar n) $
is the resonance function for BO
defined in \eqref{BX4} below.
Then, together with the fact that 
$|\Xi_\BO(\bar n)| \ges \max(|n|, |n_1|, |n_2|)$
under $n = n_1 + n_2$ and $n n_1 n_2 \ne 0$, 
the relation~\eqref{IL8}
states
that high frequency (multilinear) behavior
of (modulated) ILW is essentially given by that of 
(modulated) 
BO.
In~\cite{CGLLO1}, 
we proved well-posedness
of the modulated ILW \eqref{ILW1}
under the same hypothesis as that for the modulated BO
by using this observation.
Similarly, 
 a slight modification of the proof
of Theorem \ref{THM:UU2}
 with \eqref{IL8}
yields the corresponding result
for the modulated ILW \eqref{ILW1}.
We omit details.

\end{remark}

Lastly, we consider the following modulated derivative 
nonlinear Schr\"odinger equation (dNLS) on the circle:
\begin{align}
i \dt u + \dx^2 u \cdot \dt w =  \dx u^2.
\label{dNLS1}
\end{align}

\begin{theorem}\label{THM:UU3}
Given $\rho > \frac 52$,  $\frac12< \g < 1$, and $T> 0$, 
let  $w$ be $(\rho,\g)$-irregular on $[0, T]$ in the sense of Definition~\ref{DEF:ir}.

\smallskip

\noi
\textup{(i) (unconditional well-posedness).} 
Let   $s\ge 0$.
Then, the modulated dNLS equation~\eqref{dNLS1}
on  $\T$
is unconditionally locally well-posed in $H^s(\T)$.

\smallskip

\noi
\textup{(ii) (nonlinear smoothing).} 
Suppose that 
$s, s_0 \in \R$ with 
$s_0 > s\ge 0$   satisfy
\begin{align*}
s_0 \le  \rho - 1
\qquad \text{and}\qquad 
s_0 < s + \rho - \frac 52.
\end{align*}

\noi
Given $u_0 \in H^s(\T)$, 
let $u
\in C([0, \tau]; H^s(\T))$ be the  solution 
to the modulated dNLS equation~\eqref{dNLS1}
on $\T$
with $u|_{t = 0} = u_0$
constructed in Part (i), 
where 
$\tau \in (0,  T]$ denotes the local existence time.
Then, we have 
\begin{align*}
 u - e^{i w(t) \dx^2}u_0 \in C([0, \tau]; H^{s_0}(\T)).
\end{align*}

\end{theorem}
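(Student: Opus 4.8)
The plan is to adapt the single normal form reduction used for the modulated KdV \eqref{kdv1} and BO \eqref{BO} to \eqref{dNLS1}. Writing $\uw(t) = e^{i w(t)\dx^2}$ for the modulated propagator and $\uu(t) = \uw(t)^{-1}u(t)$ for the modulated interaction representation, the Duhamel formulation of \eqref{dNLS1} reads, on the Fourier side,
\begin{align*}
\ft\uu(n,t) = \ft u_0(n) + n\sum_{\substack{n = n_1+n_2\\ n_1,\, n_2 \in \Z^*}}\int_0^t e^{i w(t')\,\Xi_\dNLS(\bar n)}\,\ft\uu(n_1,t')\,\ft\uu(n_2,t')\,dt',
\end{align*}
with resonance function $\Xi_\dNLS(\bar n) = n^2 - n_1^2 - n_2^2 = 2 n_1 n_2$ under $n = n_1+n_2$. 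The decisive structural fact is the lower bound $|\Xi_\dNLS(\bar n)| = 2|n_1 n_2|\ges \max(|n|,|n_1|,|n_2|)$, valid for all $n_1,n_2\in\Z^*$, so that \eqref{dNLS1} is entirely resonance-free. This is precisely the bound satisfied by $\Xi_\BO$, which is why the hypotheses and the smoothing exponents in Theorem~\ref{THM:UU3} coincide with those of Theorem~\ref{THM:UU2}; I would define the bilinear driver $X^{\dNLS}$ as in \eqref{K1}--\eqref{K2} with $\Xi_\KDV$ replaced by $\Xi_\dNLS$, so that the time integral above is the nonlinear Young integral $\I^{X^{\dNLS}}(\uu)$.

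Next I would perform one normal form reduction. For each fixed $(n,n_1,n_2)$ the map $t'\mapsto \Phi^w_{t',0}(\Xi_\dNLS(\bar n))$ is of class $C^1$ with derivative $e^{i w(t')\Xi_\dNLS(\bar n)}$ --- genuinely $C^1$ in time even though $w$ is only continuous --- so integrating by parts in $t'$ at each frequency is classical and gives
\begin{align*}
\ft\uu(n,t)
&= \ft u_0(n)
+ n\sum_{n=n_1+n_2} \Phi^w_{t,0}\big(\Xi_\dNLS(\bar n)\big)\,\ft\uu(n_1,t)\,\ft\uu(n_2,t)\\
&\quad - n\sum_{n=n_1+n_2}\int_0^t \Phi^w_{t',0}\big(\Xi_\dNLS(\bar n)\big)\,\partial_{t'}\big(\ft\uu(n_1,t')\,\ft\uu(n_2,t')\big)\,dt'.
\end{align*}
Substituting the equation for $\partial_{t'}\ft\uu(n_j,\cdot)$ --- a pointwise-in-time expression that is continuous since, for $\uu\in C([0,\tau];H^s)$ with $s\ge 0$, the convolution $|\ft\uu|\ast|\ft\uu|$ is bounded by $\ell^2\ast\ell^2\hookrightarrow\ell^\infty$ --- turns the last line into a time integral of a trilinear expression. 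The upshot is a representation $\uu = u_0 + \mathcal{B}_t[\uu(t)] + \int_0^t\mathcal{R}_{t,t'}[\uu(t')]\,dt'$, where the \emph{boundary} term $\mathcal{B}_t$ is bilinear with multiplier $\sim n\,\Phi^w_{t,0}(\Xi_\dNLS)$ evaluated at the single time $t$, and the \emph{remainder} $\mathcal{R}_{t,t'}$ is trilinear with multiplier $\sim n\,n_1\,\Phi^w_{t',0}(\Xi_\dNLS)$ times a bounded secondary phase. The whole point is that this identity requires no temporal regularity of $\uu$: the integration by parts is done frequency-by-frequency where everything is $C^1$, and the irregularity of $w$ enters only afterwards, through the size bound $|\Phi^w_{t,r}(\Xi_\dNLS(\bar n))|\les \jb{\Xi_\dNLS(\bar n)}^{-\rho}|t-r|^\g$ of Definition~\ref{DEF:ir}, used in the spatial summation. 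Since the identity holds for \emph{every} solution in $C([0,\tau];H^s)$, it is this representation --- rather than the contraction constructing solutions --- that delivers unconditional uniqueness.

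It then remains to prove two multilinear estimates in $H^s(\T)$, $s\ge 0$. For the boundary term, using $|\Phi^w_{t,0}|\les \jb{2n_1 n_2}^{-\rho}t^\g$, I would show $\|\mathcal{B}_t[f]\|_{H^{s_0}}\les \tau^\g\|f\|_{H^s}^2$; the binding high-low regime (one small frequency, where $\jb{2n_1 n_2}^{-\rho}\sim\max(|n|,|n_1|)^{-\rho}$) is what produces the condition $s_0\le \rho-1$. For the remainder, I would reduce, via Cauchy--Schwarz and the counting estimate of Lemma~\ref{LEM:SUM}, to the finiteness of $\sup_n\sum_{m_1,m_2}M^2$ with $M$ the weighted trilinear multiplier. \textbf{The main obstacle is exactly this trilinear estimate in the high-high-low regime}, with two high, comparable input frequencies $m_1,m_2\sim N$ and a low third frequency: there the two derivative losses $|n||n_1|\sim N^2$ are compensated by only a single resonance gain $\jb{2n_1 n_2}^{-\rho}\sim N^{-\rho}$, and summing over the $\sim N$ lattice points on $m_1+m_2 = n_1$ yields $\sum M^2\sim N\cdot(N^{2-\rho})^2 = N^{\,5-2\rho}$, which forces $\rho>\tfrac52$. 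The weighted version of the same computation produces the smoothing threshold $s_0 < s+\rho-\tfrac52$.

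Finally, the two estimates show that $\Gamma[\uu](t) := u_0 + \mathcal{B}_t[\uu(t)] + \int_0^t\mathcal{R}_{t,t'}[\uu(t')]\,dt'$ is a contraction on a ball of $C([0,\tau];H^s)$ for $\tau$ small --- the factor $t^\g$ in $\mathcal{B}_t$ and the time integration in the remainder both supply smallness as $\tau\to 0$ --- which yields local existence in $C([0,\tau];H^s)$ for every $s\ge 0$ and $\rho>\tfrac52$, and, together with the representation above, unconditional uniqueness; this proves Part~(i). Part~(ii) follows by rerunning the two estimates with target regularity $s_0$ in place of $s$, using that $u - e^{i w(t)\dx^2}u_0 = \uw(t)\,\I^{X^{\dNLS}}(\uu)(t)$ and that $\uw(t)$ is an isometry on each $H^{s_0}(\T)$. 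Apart from the elementary verification that $\Xi_\dNLS(\bar n)=2n_1n_2$ is resonance-free and obeys the same lower bound as $\Xi_\BO$, the entire modulated BO analysis of Theorem~\ref{THM:UU2} transfers essentially verbatim.
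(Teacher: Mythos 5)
Your proposal is correct and follows essentially the same route as the paper: a single normal form reduction producing a bilinear boundary term and a trilinear remainder, the key identity $\Xi_\dNLS(n,n_{12},n_3)=2n_{12}n_3$ giving the factorized gain $\jb{n_{12}}^{-\rho}\jb{n_3}^{-\rho}$, the summation via Lemma~\ref{LEM:SUM} yielding exactly the thresholds $\rho>\frac52$ and $s_0<s+\rho-\frac52$, a contraction in $C([0,\tau];H^s(\T))$, and the frequency-by-frequency justification of the integration by parts valid for every $C([0,\tau];L^2(\T))$ solution, which is what delivers unconditionality. The only (immaterial) deviation is that you integrate by parts using $\Phi^w_{t',0}$, so your boundary term is evaluated at the right endpoint $t$ rather than at $0$ as in the paper's \eqref{SX5}; this is precisely the variant \eqref{NF4y} that the paper notes may equally be used.
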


In \cite{CGKO}, Chung, Guo, Kwon, and the fourth author
studied the unmodulated version of~\eqref{dNLS1} on the circle:
\begin{align}
i \dt u + \dx^2 u =  \dx u^2.
\label{dNLS0}
\end{align}

\noi
By implementing an infinite iteration
of normal form reductions introduced in \cite{GKO}
and combining it with a gauge transform
(the Hopf-Cole transformation), 
they proved
small data unconditional global well-posedness in $L^2(\T)$.
We point out that, in the modulated setting, 
 Theorem~\ref{THM:UU3}
follows from one normal form reduction.
Note also that 
 Theorem \ref{THM:UU3}\,(i)
claims only local well-posedness;
see \cite[Appendix A]{CGKO}
for an explicit construction of
a finite-time blowup solution
to the (unmodulated) dNLS \eqref{dNLS0}.
We present 
 a proof of Theorem \ref{THM:UU3}
in  Section~\ref{SEC:NLS}.

\subsection{Normal form approach
for the modulated KdV}
\label{SUBSEC:NF}

In this subsection, we sketch the main idea
of the normal form approach to construct solutions
to modulated dispersive equations
by taking the modulated KdV \eqref{kdv1} as an example.
In the unmodulated setting, 
this normal form approach was introduced  by Babin, Ilyn, and Titi \cite{BIT}
and was further developed in \cite{KO, GKO, OW, Ki2}.
See also Remark \ref{REM:NF1}.

In \cite{CGLLO1}, 
the key step  in proving Theorem \ref{THM:1}\,(i)
is to show the boundedness
of the bilinear driver 
$X^\KDV_{t,r}$ in \eqref{K2}
from 
 $\big(H^s(\T)\big)^{\otimes 2}$ into $H^s(\T)$
with the norm $\les |t-r|^\g$, 
where the following bound:
\begin{align}
| \Phi^w_{t,r}(\Xi_\KDV (\bar n))|
\le
\|\Phi^w\|_{  \W^{\rho,\g}_T} |t - r|^\g \jb{\Xi_\KDV (\bar n)}^{-\rho}, 
\label{decay1}
\end{align}

\noi
coming from 
\eqref{rho1},  
plays a crucial role.  See \cite[Proposition 4.1]{CGLLO1}.
Since $\g > \frac 12$, 
this then allows us to apply the sewing lemma
and make sense of the integral term in \eqref{mild3x}
as a nonlinear Young integral.
In proving Theorem \ref{THM:UU1}\,(i)
where we do not impose any positive temporal regularity,
such an approach immediately breaks down. 
In order to overcome
this difficulty, 
we proceed with a normal form reduction (namely, 
integration by parts in time) 
together with the identity \eqref{NS1} below,
which transforms the original equation
\eqref{mild3x} (at the level of the modulated interaction representation)
into the equation
\eqref{NF4x} (see also \eqref{K2} and \eqref{nox1})
with the quadratic and {\it cubic} nonlinearities.
The main point is that 
these quadratic and cubic nonlinearities
 come with 
the ``good'' factor $\Phi^w_{t,r}(\Xi_\KDV (\bar n))$
for which \eqref{decay1} applies, 
allowing us to establish nonlinear estimates
{\it without} imposing any positive temporal regularity.
Heuristically speaking, 
the normal form reduction 
converts  the issue of having no  positive temporal regularity
into a higher order (namely, cubic in this case)  nonlinearity
(which is still manageable under a suitable assumption).

By taking the Fourier transform of \eqref{kdv1a}, we have
\begin{align}
\dt \ft \uu(t, n) 
= in \sum_{ \substack{n_1, n_2 \in \Z_*\\n = n_1+n_2}}
  e^{i \Xi_\KDV (\bar n)w(t)}
\ft \uu(t, n_1)  \ft \uu(t, n_2), \quad n \in \Z_*, 
\label{kdv1b}
\end{align}

\noi
where $\Xi_\KDV (\bar n) $ is 
the resonance function  in \eqref{K3a}.
In the unmodulated setting (i.e.~$w(t) = t$), 
the first step of normal form reductions
in \cite{BIT} was based
on the identity:
\begin{align}
e^{i \Xi_\KDV (\bar n) t} = \frac{\dt e^{i \Xi_\KDV (\bar n) t}}{i \Xi_\KDV (\bar n)}.
\label{NS0}
\end{align}

\noi
In the current modulated setting, 
we replace this identity by the following
simple, but crucial observation:
\begin{align}
\dd_{t_2}  \Phi^w_{t_1, t_2}(\Xi_\KDV (\bar n)) = - e^{i \Xi_\KDV (\bar n)w(t_2)}, 
\label{NS1}
\end{align}

\noi
which follows from  \eqref{rho2} and the fundamental theorem of calculus.
Then, proceeding with a formal computation,\footnote{Namely, 
we do not worry about justifying computations
such as switching the sum
and the integration
in the formal computation presented here.
We justify all the formal steps in Section \ref{SEC:KDV}.}
a normal form reduction (namely, 
integration by parts in time, using
\eqref{NS1})
yields
\begin{align}
\ft \uu(t, n) - \ft \uu(0, n) 
& = i n  \int_0^t \sum_{\substack{n_1, n_2 \in \Z_*\\n = n_1 + n_2}}
e^{i  \Xi_\KDV (\bar n)w(t')} \ft \uu(t', n_1)\ft \uu(t', n_2) dt'
\notag\\\
& =-  i n  \sum_{\substack{n_1, n_2 \in \Z_*\\n = n_1 + n_2}}
 \int_0^t
 \partial_{t'} \Phi^w_{t, t'}(\Xi_\KDV (\bar n))\ft \uu(t', n_1)\ft \uu(t', n_2) dt'
 \notag\\
 & =
  i n  \sum_{\substack{n_1, n_2 \in \Z_*\\n = n_1 + n_2}}
\Phi^w_{t, 0}(\Xi_\KDV (\bar n))\ft \uu(0, n_1)\ft \uu(0, n_2) 
\label{NF2}
\\
&  \quad + 2 i n  \sum_{\substack{n_1, n_2 \in \Z_*\\n = n_1 + n_2}}
 \int_0^t
 \Phi^w_{t, t'}(\Xi_\KDV (\bar n))
 \dt \ft \uu(t', n_1) \ft \uu(t', n_2) dt' \notag\\
& =: \F_x\big(X^\KDV_{t, 0}(\uu(0), \uu(0))\big)(n)   + A(t, n), 
\notag
\end{align}

\noi
where $X^\KDV_{t, 0}$ is the bilinear driver defined in 
\eqref{K2}.
At the third step, we used 
the fact that $\Phi^w_{t, t} = 0$, 
the product rule for differentiation, and 
symmetry in $n_1$ and $n_2$. 
See Lemma \ref{LEM:nonlin1} below
for the boundedness and nonlinear smoothing property
of the bilinear driver $X^{\KDV}_{t, 0}$.

\begin{remark}\label{REM:NFx}
\rm
We point out 
that, after the integration by parts in \eqref{NF2},  there is 
no boundary term from the right endpoint  since $\Phi^w_{t, t} = 0$.
This  is different from a usual 
application of a
normal form reduction in the unmodulated setting, 
where we also have a non-trivial contribution from 
the right endpoint.
Furthermore, 
in the usual construction of solutions via the normal form approach, 
the boundary terms do not become small even if we take
a short time interval, 
and thus we need to introduce a cutoff parameter
(based on a frequency or modulation size)
to create smallness to carry out a contraction;
see \cite{BIT, KO, GKO}.
In the current modulated setting, however, 
the estimate \eqref{non1c} for the boundary term comes with the factor~$t^\g$
and thus there is no need to introduce such a parameter.

Our version of the normal form reduction, 
using the identity \eqref{NS1}, 
also applies to the unmodulated setting, 
giving a simplification of the argument
in the sense described above
(namely, no need to introduce a parameter
to create a small factor on the boundary terms).

\end{remark}

Given $0 \le r <  t \le T$, 
define the trilinear operator $\NN^\KDV_{t, r}(f_1, f_2, f_3)$,  acting on functions on~$\T$, 
 by 
\begin{align}
\begin{split}
& \F_x\big(\NN^\KDV_{t, r}(f_1, f_2, f_3)\big)(n)\\
& \quad =   
- 2 n  \sum_{\substack{n_1, n_2, n_3 \in \Z_*\\n = n_{123}}}
 \Phi^w_{t, r}(\Xi_\KDV (n, n_{12}, n_3))
e^{i  \Xi_\KDV (n_{12}, n_1, n_2)w(r)}n_{12} 
\prod_{j = 1}^3\ft f_j(n_j), 
\end{split} 
\label{nox1}
\end{align}

\noi
where   we used the following short-hand notation:
\begin{align}
n_{1\cdots k} = n_1 + \cdots + n_k.
\label{no1}
\end{align}

\noi
See Lemma \ref{LEM:nonlin2} below
for the boundedness and nonlinear smoothing property
of the trilinear operator $\NN_{t, r}^\KDV$.

By substituting~\eqref{kdv1b}
into $A(t, n)$
on the right-hand side of \eqref{NF2},  we have
\begin{align}
A(t, n) = \int_0^t 
\F_x\big(\NN_{t, t'}^\KDV(\uu(t'), \uu(t'), \uu(t'))\big)(n)dt'.
\label{NF3}
\end{align}

\noi
Then, putting 
\eqref{NF2} and \eqref{NF3} together, 
we have
\begin{align*}
\ft \uu(t, n) - \ft \uu(0, n) 
 = \F_x\big(X^\KDV_{t, 0}(\uu(0)  )\big)(n)
 +  \int_0^t 
\F_x\big(\NN^\KDV_{t, t'}(\uu(t'))\big)(n)dt', 
\end{align*}

\noi
where we used the  short-hand notation \eqref{short1}.
Namely, we have
\begin{align}
\begin{split}
\uu(t) 
 =  u_0 + 
 X^\KDV_{t, 0}(u_0 )
+ 
\int_0^t 
\NN^\KDV_{t, t'}(\uu(t'))dt'.
\end{split}
\label{NF4x}
\end{align}

\noi
Hence, 
we arrive at the following {\it normal form equation}
for the 
modulated KdV \eqref{kdv1}:
\begin{align}
\begin{split}
u(t) 
& = \uw(t) u_0 + 
\uw(t) X^\KDV_{t, 0}(u_0 )
+ \uw(t)
\int_0^t 
\NN^\KDV_{t, t'}(\uw(t')^{-1}u(t'))dt'.
\end{split}
\label{NF4a}
\end{align}

\noi
The main point of the normal form reduction
is to transform the original
equation~\eqref{kdv1a}
to the normal form equation \eqref{NF4x}
which 
encodes multilinear dispersive smoothing
(coming from the irregularity of $w$)
in an explicit manner
thanks to the presence of 
$\Phi^w_{t, r}(\Xi_\KDV)$
in $X^\KDV_{t, 0}$
and $\NN^\KDV_{t, t'}$
(see 
\eqref{K2}
and
\eqref{nox1}).
Using the boundedness properties
of $X^\KDV_{t, 0}$
and $\NN^\KDV_{t, t'}$
(Lemmas \ref{LEM:nonlin1}
and \ref{LEM:nonlin2}), 
 a standard contraction argument
yields
unconditional local
well-posedness
of \eqref{NF4x} (and thus of \eqref{NF4a})
in $H^s(\T)$, $s \ge 0$, 
under the hypothesis of 
Theorem~\ref{THM:UU1}.
In the proof of 
Theorem \ref{THM:UU1} presented
in Section \ref{SEC:KDV}, 
we justify the formal computation 
in \eqref{NF2}
and show that 
any solution to the modulated KdV \eqref{kdv1}
(namely, satisfying the Duhamel formulation \eqref{mild1x})
in the class $C([0, \tau]; L^2(\T))$
is also a solution to the normal form equation~\eqref{NF4a}, 
which 
in turn allows us
to show that 
a unique solution
to the normal form equation \eqref{NF4a}
is also an unconditionally unique solution to  the original modulated KdV~\eqref{kdv1}.

\begin{remark}\label{REM:NF9}
\rm

(i) 
Consider 
a modulated dispersive PDE \eqref{ME1}, 
where the nonlinearity $\NN(u)$
is $k$-linear.
Let $n_j$ denote the (spatial) frequency 
of the $j$th factor in $\NN(u)$
and set 
\begin{align}
n_{\max} = \max_{j = 1, \dots, k} |n_j|.
\label{max1}
\end{align}

\noi
If the equation is {\it non-resonant}
with a good lower bound on the resonance function
such as  
$|\Xi(\bar n)|\ges n_{\max}^\ta$
for some (small) $\ta > 0$, 
then
a slight modification of the proof of Theorem \ref{THM:UU1}
yields 
unconditional local well-posedness
by one normal form reduction, 
at least when the modulation is sufficiently irregular.
See Remark \ref{REM:NLS}
and Appendix \ref{SEC:A}
for a discussion on the modulated cubic NLS \eqref{NLS2}
 which is {\it not} non-resonant.

\smallskip

\noi
(ii) 
By adapting the normal form approach 
developed in 
\cite{KOY, Ki2, FO, CLOZ}
for (unmodulated) dispersive equations on $\R^d$, 
one can study unconditional well-posedness of modulated 
dispersive equations on $\R^d$.
We, however, do not pursue this issue here.

\end{remark}

\begin{remark}\label{REM:NF1} \rm
The normal form reduction
presented in this subsection
corresponds to the so-called Poincar\'e normal form reduction, 
where there is no resonance.
When there are resonances, it is called
the Poincar\'e-Dulac normal form reduction;
see \cite[Section 1]{GKO}.
See also Appendix 
 \ref{SEC:A}, 
where 
we implement a Poincar\'e-Dulac normal form reduction for the modulated cubic NLS.
In the unmodulated setting, 
the Poincar\'e-Dulac normal form reductions have been 
used in various contexts:
unconditional uniqueness \cite{BIT, KO, GKO, OW, Ki2, Ki3, MP2, OSW}, 
nonlinear smoothing and the nonlinear Talbot effect \cite{ETz, ETz3, ETz2}, 
existence of global attractors~\cite{ETz3}, 
reducibility~\cite{CGKO}, 
improved energy estimates 
in both the deterministic and probabilistic contexts
\cite{TT, GO, OW2, OST, OS, STz}.
We also mention a very recent work 
\cite{CLOZ}, 
where an infinite iteration of normal form reductions
is combined with a complete integrability approach
to establish shallow-water convergence of the
(scaled) ILW to KdV in $L^2(\M)$, $\M = \T$ or $\R$.
We point out that 
the
higher order $I$-method (with correction terms) \cite{CKSTT03, CKSTT4}
also corresponds to the Poincar\'e-Dulac normal form reduction, 
but applied to the equation satisfied by a modified energy.
It may be of interest to investigate 
analogues of these results
in the modulated setting.

\end{remark}

\begin{remark}\rm

Instead of \eqref{NS1}, we may use the following identity:
\begin{align*}
\dd_{t_1} \Phi^w_{t_1, t_2}(\Xi_\KDV (\bar n)) = e^{i \Xi_\KDV (\bar n)w(t_1)}
\end{align*}

\noi
and proceed with a normal form reduction.
By repeating the computation presented in this subsection, we
then obtain
a slightly different normal form equation:
\begin{align}
\begin{split}
\uu(t) 
 =  u_0 + 
 X^\KDV_{t, 0}(\uu (t) )
+ 
\int_0^t 
\NN^\KDV_{0, t'}(\uu(t'))dt'.
\end{split}
\label{NF4y}
\end{align}

\noi
Note that the argument of $\uu (t)$
of $X^\KDV_{t, 0}$ is now evaluated at the right endpoint of the interval
$[0, t]$.
Compare this with \eqref{NF4x}.
While we prove Theorem \ref{THM:UU1} 
using \eqref{NF4a}, 
one may equally use the normal form equation \eqref{NF4y}
to prove Theorem \ref{THM:UU1} .
We, however, point out that
the normal form reduction with \eqref{NS1}
allows us to directly compare our normal form approach 
with the sewing lemma approach to nonlinear Young integrals
developed in \cite{CGLLO1}.
See the next subsection.

\end{remark}

\begin{remark}\label{REM:NLS}\rm
Consider the following modulated cubic NLS on $\T$:
\begin{align}
i \dt u + \dx^2 u \cdot \dt w= |u|^2u.
\label{NLS2}
\end{align}

\noi
The associated resonance function 
$\Xi_\NLS (\bar n)$  is given by 
\begin{align}
\begin{split}
\Xi_\NLS (\bar n)
& = \Xi_\NLS(n,n_1,n_2, n_3) = 
n^2 - n_1^2 + n_2^2 - n_3^2\\
& = 2(n- n_1) (n - n_3), 
\end{split}
\label{Xi9}
\end{align}

\noi
where the last step holds under $n = n_1 - n_2 + n_3$, 
and thus we see that 
 the resonance function 
$\Xi_\NLS (\bar n)$
 vanishes when $n = n_1$ or $n_3$.
Namely, the modulated cubic NLS \eqref{NLS2} is 
{\it not} non-resonant and thus the observation made in 
Remark \ref{REM:NF9}\,(i) 
does not apply.

In \cite{GKO, OW}, 
the fourth author with his co-authors 
studied a normal form approach to 
 the unmodulated cubic NLS  on $\T$:
\begin{align}
i \dt u + \dx^2 u = |u|^2u.
\label{NLS3}
\end{align}

\noi
In order to overcome
the difficulty coming from  the nearly resonant contributions: $|\Xi_\NLS(\bar n)| \ll n_{\max}$, 
where $n_{\max}$ is as in \eqref{max1},
the authors introduced 
 an 
 infinite iteration of normal form reductions, 
which  transformed \eqref{NLS3}
 into the normal form equation, 
involving infinite series of nonlinearities of arbitrarily high degrees.
This allowed them to prove 
sharp unconditional uniqueness 
of  \eqref{NLS3} in $H^s(\T)$, $s \ge \frac 16$, 
and also unconditional uniqueness
of the normal form equation in $L^2(\T)$
and 
in almost critical Fourier-Lebesgue spaces;
see~\cite{OW}
for further details.

In Appendix \ref{SEC:A}, 
we implement a (Poincar\'e-Dulac) normal form reduction for the modulated cubic NLS \eqref{NLS2}, 
using the identity 
\eqref{NS1},
and prove its unconditional uniqueness;
 see Theorem \ref{THM:UU4}. 
See also Remark \ref{REM:NLS1}
for a discussion on 
a general model which includes (nearly) resonant interactions.

\end{remark}

\subsection{Comparison with 
the sewing lemma approach}
\label{SUBSEC:comp}

In this subsection, we
compare  the normal form approach 
presented in the previous subsection  with the sewing lemma approach 
to nonlinear Young integrals in \cite{CGLLO1}.
For this purpose, we first need to introduce some preliminary notations.

Let $V$ be a Banach space and $T>0$.
For $n\in\N$, we denote 
\begin{align*}
\Delta_{n, T} = 
\big\{ (t_1, \ldots, t_n) \in [0,T]^n: \ t_i > t_j 
\text{ for } i < j\big\}.
\end{align*}
We denote by $C_{n,T}V$ 
the space of continuous functions 
from $\Delta_{n,T}$ to $V$. 
We define the coboundary operator 
$\updl: C_{n,T} V  \to C_{n+1,T} V$ 
as follows; 
given  $f\in C_{n,T} V$ 
and $(t_1,\ldots, t_{n+1}) \in \Dl_{{n+1},T}$, 
we set
\begin{align*}
(\updl f)_{t_1,\ldots , t_{n+1}} = 
\sum_{k=1}^{n+1} (-1)^{k} f_{t_1, \ldots, t_{k-1}, t_{k+1}, \ldots, t_{n+1}}.
\end{align*}

\noi
For example, for $f\in C_T V$
and 
$g\in C_{2,T} V$, 
we have 
\begin{align}
\begin{split}
(\updl f )_{t,r} &= f_t - f_r, \\
(\updl g)_{t_1,t_2,t_3} &= 
g_{t_1,t_3} - g_{t_1,t_2} - g_{t_2,t_3}
\end{split}
\label{dl1}
\end{align}

\noi
for $(t,r)\in\Dl_{2,T}$
and $(t_1,t_2,t_3) \in \Dl_{3,T}$.
As noted in \cite{GT10}, 
the sequence 
\begin{align*}
0 \too \R \too C_{1, T}V \stackrel{\updl}{\too} C_{2, T}V
\stackrel{\updl}{\too} C_{3, T}V
\stackrel{\updl}{\too} \cdots
\end{align*}

\noi
is exact. 
In particular, we have 
 $\updl\circ\updl =0$ and 
if $f \in C_{n,T} V$ with $\updl f =0$, 
then there exists a $g\in C_{n-1,T}V$ 
such that $f= \updl g$; 
see, for example,  \cite[Lemma 2.1]{GT10}. 

Given $0 < \g \le 1$, we denote by $C^\g_T V = C^\g([0, T]; V)$ the space of $\g$-H\"older continuous functions taking values in $V$, endowed  with the seminorm:
\begin{align*}
\|f\|_{C^\g_T V} = \sup_{(t,r)\in \Dl_{2,T}} 
\frac{\|(\updl f)_{t,r}\|_V}{|t-r|^\g}.
\end{align*}

\noi
We also define 
 $\CC^\g_T V = \CC^\g([0, T]; V)$ via the norm:
\begin{align}
\| f \|_{\CC^\g_TV} = \| f \|_{L^\infty_TV} + \|f\|_{C^\g_T V}.
\label{Ho1}
\end{align}

\noi 
We also introduce the spaces 
$C^\g_{n,T}V$, $n=2,3$, 
equipped with the following H\"older-type norms;
 for $g\in C_{2,T}V$ and $h\in C_{3,T}V$, we set
\begin{align}
\begin{split}
\| g\|_{C^\g_{2,T} V} & 
= \sup_{(t,r) \in \Dl_{2,T}} \frac{\|g_{t,r} \|_{V} }{|t-r|^{\g}}, \\
\| h\|_{C^\g_{3,T} V} & 
= \inf_{0<\al<\g} 
\sup_{(t_1,t_2,t_3) \in \Dl_{3,T}} 
\frac{ \|h_{t_1,t_2,t_3}\|_V}
{|t_1-t_2|^{\al} |t_2-t_3|^{\g-\al}}.
\end{split}
\label{Ho2}
\end{align}

\noi
Given a Banach space $V$ and $k \in \N$, 
we  use $\Lip_k(V)$ 
to denote the Banach space of 
locally Lipschitz maps $f:V\to V$
with polynomial growth of order $k$
such that 
\begin{align}
\|f\|_{\Lip_k(V)} = 
\sup_{x,y\in V} 
\frac{\|f(x)-f(y)\|_V}{\|x-y\|_V \big(1+\|x\|_V+\|y\|_V\big)^{k-1}}
<\infty .
\label{Lip1}
\end{align}

\noi
Then, 
from \eqref{Ho2} and \eqref{Lip1}, we have 
\begin{align}
\|f \|_{C^\g_{2, T}\Lip_k(V)}
= \sup_{(t,r) \in \Dl_{2,T}}
\frac 1{|t-r|^{\g}}
\sup_{x,y\in V} 
\frac{\| f_{t, r}(x)- f_{t, r}(y)\|_V}{\|x-y\|_V \big(1+\|x\|_V+\|y\|_V\big)^{k-1}}.
\label{Ho3}
\end{align}

\medskip
 
 Next, we 
review the essential idea
 on the construction of a nonlinear Young integral 
 via the sewing lemma (Lemma \ref{LEM:sew}) presented in \cite{CGLLO1}.
Fix a driver  $X\in C^\g_{2,T}\Lip_k(V)$, where $C^\g_{2,T}\Lip_k(V)$
is as in~\eqref{Ho3},  such that 
\begin{align*}
X_{t,r}(0)=0
\end{align*} 

\noi
for any $(t,r) \in  \Dl_{2,T}$, 
and 
\begin{align}
\label{Ja1}
(\updl X)_{t_1,t_2,t_3} = 0
\end{align}

\noi
for any $(t_1,t_2,t_3)\in \Dl_{3,T}$.
Given 
$\uu\in \CC^{\al}([0,T];V)$ for some $0 < \al < 1$, 
the nonlinear Young integral $\I^X(\uu)$ of $\uu$
is a unique function 
with $\I^X(\uu)(0) = 0$
whose increment is given by 
\begin{align}
\updl 
\big(\I^X(\uu)\big)_{t, r} 
= X_{t, r}(\uu_\bul).
\label{YQ1}
\end{align}

\noi
Here, $\uu_\bul$ denotes 
the function $\uu$ evaluated at the variable of integration
$\bul$, satisfying $r \le \bul \le  t$.
As it is, the expression on the right-hand side of \eqref{YQ1}
is not well defined.
By replacing $\bul$ by the left endpoint $r$, 
we have 
\begin{align}
X_{t, r}(\uu_\bul)
= X_{t, r}(\uu_r) + R_{t, r}
\label{YQ2}
\end{align}

\noi
for some two-parameter process $R = R^{X, \uu}$.
%
Define  $\Theta = \Ta^{X, \uu}$ on $\Dl_{2,T}$
by setting
\begin{align}
\Theta_{t,r} = X_{t,r}(\uu_r), \quad (t,r)\in \Dl_{2,T}.
\label{Ja1a}
\end{align}

\noi
Then, 
by applying the coboundary operator $\updl$
defined in \eqref{dl1}
to \eqref{YQ2}
and noting that the left-hand side of~\eqref{YQ2} vanishes
under $\updl$, 
any error term $R$ in~\eqref{YQ2} (if it exists) satisfies
\begin{align}
(\updl R)_{t_1, t_2, t_3}
= - (\updl  \Theta)_{t_1, t_2, t_3}
=  X_{t_1, t_2}( \uu_{t_2}) - 
  X_{t_1, t_2}( \uu_{t_3})
\label{YQ3}
\end{align}

\noi
for any $(t_1, t_2, t_3) \in\Dl_{3, T}$, 
where we used 
 \eqref{Ja1} at the second equality.

We now recall 
the  sewing lemma due to the first author \cite{Gub04};
see also
\cite[Proposition~2.3, Corollaries  2.4 and  2.5]{GT10}
and  \cite[Lemma~4.2]{FH20}.
We set 
$C^{1+}_{n,T}V = \bigcup_{\g>1}C^\g_{n,T}V$.

\begin{lemma}[sewing lemma]
\label{LEM:sew}

Let $V$ be a Banach space and 
fix $T>0$. 
Then,  there exists a unique linear map \textup{(}called the sewing map\textup{)}
$\Lambda:C^{1+}_{3,T} V \cap 
\Ker \updl|_{C_{3,T}V}
\to C^{1+}_{2,T}V$ such that 

\smallskip
\begin{enumerate}
\item[(i)] 
We have 
$\updl \Lambda h = h$
for each  $h\in C_{3,T}V\cap \Ker  \updl|_{C_{3,T} V}$.

\smallskip

\item[(ii)]
 For each $\z >1$,
the sewing map $\Lambda$ is continuous
from $C^\z _{3,T}V\cap 
\Ker  \updl|_{C_{3,T} V}$ to 
$C^\z _{2,T}V$ such that 
\begin{align*}
\|\Lambda h \|_{C^\z _{2,T}V} 
\le \frac{1}{2^\z - 2}  \| h \|_{C^\z _{3,T}V} 
\end{align*}

\noi
for any $h\in C^\z _{3,T}V$.

\smallskip
\item[(iii)] 
Given any  $g\in C_{2,T}V$ 
with 
$\updl g\in C^\z _{3,T}V$, 
 there exists  unique
$f\in C([0, T];V)$  \textup{(}modulo an additive  constant\textup{)} 
such that 
$\updl f = (\Id - \Lambda \updl)g$. 
In addition, 
we have 
\begin{align*}
(\updl f)_{t,r} = \lim_{|\Pi([r,t])|\to 0} 
\sum_{j=0}^n g_{t_j,t_{j+1}}
\end{align*}

\noi
 for any $(t,r)\in \Dl_{2,T}$,
 where 
 the limit is over any partition
 $\Pi([r,t])$  
 of  $[r,t]$\textup{:} 
\[\Pi ([r,t]) = \{r = t_n < \dots < t_1 <  t_0 = t\}\]
whose mesh size 
$|\Pi([r,t])| = \sup_{j} |t_j-t_{j+1}|$ 
tends to $0$.

\end{enumerate}

\end{lemma}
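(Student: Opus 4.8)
The plan is to construct the sewing map $\Lambda$ directly on $\Ker\updl|_{C_{3,T}V}\cap C^\z_{3,T}V$ by a dyadic limiting procedure, to read off the bound in (ii) from that construction, and then to obtain (i), uniqueness, and (iii) from the construction together with the exactness of the $\updl$-complex recalled above. Linearity of $\Lambda$ will be manifest from the construction.

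\smallskip

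\noi
\textbf{Construction and the estimate (ii).}
Fix $h\in \Ker\updl|_{C_{3,T}V}\cap C^\z_{3,T}V$ with $\z>1$ and a pair $(t,r)\in\Dl_{2,T}$. I would introduce the level-$n$ dyadic partition of $[r,t]$ with nodes $t^n_i=r+i\hspace{0.3mm}2^{-n}(t-r)$, together with a corresponding approximation $g^n_{t,r}$ of the unknown increment. The defining relation $\updl g=h$, i.e.
\begin{align*}
g_{t_1,t_3}=g_{t_1,t_2}+g_{t_2,t_3}+h_{t_1,t_2,t_3},
\end{align*}
forces that passing from level $n$ to level $n+1$ (i.e.~inserting the dyadic midpoints $m^n_i$ of the subintervals $[t^n_i,t^n_{i+1}]$) alters the approximation by a sum of the form $\pm\sum_i h_{t^n_i,m^n_i,t^n_{i+1}}$. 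Since $h\in C^\z_{3,T}V$, each such term is controlled by $\|h\|_{C^\z_{3,T}V}\hspace{0.3mm}|t^n_{i+1}-t^n_i|^\z$, so that
\begin{align*}
\|g^{n+1}_{t,r}-g^n_{t,r}\|_V \les 2^{-n(\z-1)}\,\|h\|_{C^\z_{3,T}V}\,|t-r|^\z.
\end{align*}
Because $\z>1$, this is summable in $n$; hence $g^n_{t,r}$ is Cauchy in $V$, and I would define $(\Lambda h)_{t,r}$ as its limit. Summing the resulting geometric series and optimizing produces the constant $\frac{1}{2^\z-2}$ in (ii). Finally, letting $\updl$ pass to the limit and using the cocycle condition $\updl h=0$ yields $\updl\Lambda h=h$, which is (i). I expect this telescoping estimate --- and the verification that $\updl$ commutes with the limit so that the limiting increments reassemble into $h$ --- to be the main obstacle, since it requires careful bookkeeping of the midpoint insertions and genuine use of $\updl h=0$.

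\smallskip

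\noi
\textbf{Uniqueness of $\Lambda$.}
If $\Lambda_1,\Lambda_2$ both satisfy (i)--(ii), then for any $h$ in the common domain the difference $Dh:=\Lambda_1 h-\Lambda_2 h$ satisfies $\updl Dh=h-h=0$ and $Dh\in C^\z_{2,T}V$, so $Dh\in C^\z_{2,T}V\cap\Ker\updl|_{C_{2,T}V}$. By exactness of the $\updl$-complex, $Dh=\updl f$ for some $f\in C_{1,T}V$; the bound $\|f_t-f_r\|_V=\|(Dh)_{t,r}\|_V\les|t-r|^\z$ with $\z>1$ forces $f$ to be constant, whence $Dh=0$ and $\Lambda_1=\Lambda_2$.

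\smallskip

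\noi
\textbf{Part (iii).}
Given $g\in C_{2,T}V$ with $\updl g\in C^\z_{3,T}V$, I note $\updl(\updl g)=0$, so $\updl g\in\Ker\updl|_{C_{3,T}V}$ and $\Lambda\updl g$ is defined. Setting $\tilde g:=(\Id-\Lambda\updl)g$ and invoking (i), $\updl\tilde g=\updl g-\updl\Lambda\updl g=0$; hence by exactness $\tilde g=\updl f$ for a unique-modulo-constant $f\in C([0,T];V)$. For the Riemann-sum formula, along any partition $\Pi([r,t])$ I split
\begin{align*}
\sum_j g_{t_j,t_{j+1}}=\sum_j (\updl f)_{t_j,t_{j+1}}+\sum_j (\Lambda\updl g)_{t_j,t_{j+1}}.
\end{align*}
The first sum telescopes to $(\updl f)_{t,r}$, while the second obeys $\big\|\sum_j(\Lambda\updl g)_{t_j,t_{j+1}}\big\|_V\le \|\Lambda\updl g\|_{C^\z_{2,T}V}\,|\Pi([r,t])|^{\z-1}(t-r)$, which tends to $0$ as the mesh $\to0$ because $\z>1$. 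This gives $(\updl f)_{t,r}=\lim_{|\Pi([r,t])|\to0}\sum_j g_{t_j,t_{j+1}}$ and completes the plan.
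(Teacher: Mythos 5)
The paper does not actually prove this lemma: it is recalled verbatim from the cited references (\cite{Gub04}, \cite[Proposition 2.3]{GT10}, \cite[Lemma 4.2]{FH20}), so the only meaningful comparison is with the standard proofs there. Your plan reproduces that standard dyadic argument, and the parts you write out are correct: the midpoint-insertion telescoping gives $2^n$ terms each of size $\|h\|_{C^\z_{3,T}V}(2^{-n-1}|t-r|)^\z$, the geometric series $\sum_{n\ge 0}2^{-\z}2^{-n(\z-1)}$ yields exactly the constant $\tfrac{1}{2^\z-2}$, uniqueness follows from exactness of the $\updl$-complex together with the fact that a $\z$-H\"older path with $\z>1$ is constant, and the Riemann-sum identity in (iii) is precisely the telescoping-plus-$|\Pi|^{\z-1}$ argument you give. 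The one step that remains a genuine gap --- which you correctly single out as the main obstacle --- is the verification of (i), $\updl\Lambda h=h$: the dyadic grids of $[t_3,t_1]$, $[t_2,t_1]$, and $[t_3,t_2]$ are mutually incompatible, so additivity of the dyadic limit does not follow from the construction as written. The standard fix (and the route taken in the cited references) is to show that the limit of the Riemann sums $\sum_j g_{t_j,t_{j+1}}$, for any $g$ with $\updl g=h$, exists along \emph{arbitrary} partitions with vanishing mesh, via the successive-point-removal (Young) argument: by pigeonhole one removes an interior node $t_k$ with $|t_{k-1}-t_{k+1}|\le 2|t-r|/n$ at cost $\|h_{t_{k-1},t_k,t_{k+1}}\|_V\les \|h\|_{C^\z_{3,T}V}(|t-r|/n)^\z$, and sums over $n$; partition-independence then gives additivity of the limit, hence $\updl\Lambda h = \updl g = h$, and simultaneously delivers the general-partition convergence claimed in (iii) (your proof of (iii) otherwise presupposes a $\Lambda$ already known to satisfy (i)--(ii)). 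With that ingredient added, your proposal is a complete and faithful rendition of the classical proof.
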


Since
$X\in C^\g_{2,T}\Lip_k(V)$
and 
$\uu\in \CC^{\al}([0,T];V)$, 
it follows from \eqref{YQ3} with \eqref{Ho3} and \eqref{Ho1} that 
$\updl R 
= -\updl \Ta 
 \in C^{\g + \al}_{3,T}V$, 
 where $\Ta$ is as in \eqref{Ja1a}; see \cite[(3.27)]{CGLLO1}.
Hence, if $\g + \al > 1$, 
then we can apply the sewing lemma (Lemma~\ref{LEM:sew})
to define an error term $R$ by setting
\begin{align}
R = - \Ld \updl  \Theta
\in C^{\g + \al}_{2,T}V.
\label{YQ4} 
\end{align}

\noi
Then, 
putting
 \eqref{YQ1},  
 \eqref{YQ2},  
and \eqref{YQ4} together,   
we define the nonlinear Young integral $\I^X(\uu)$ of $\uu$
(with respect to the nonlinear Young driver $X$)
to be  a unique function 
in $ \CC^\g([0,T]; V)$
with $\I^X(\uu)(0) = 0$
whose increment is given by 
\begin{align}
\updl (\I^X(\uu))
= (\Id - \Ld \updl )\Ta.
\label{YQ5}
\end{align}

\noi
Once \eqref{YQ5} holds, 
the nonlinear Young integral $\I^X(\uu)$ 
is given by 
the unique limit of 
Riemann-Stieltjes type sums:
\begin{align}
\I^X(\uu)(t) 
= \lim_{|\Pi([0,t])|\to 0} 
\sum^n_{j=0} \Theta_{t_j,t_{j+1}}
= \lim_{|\Pi([0,t])|\to 0} 
\sum^n_{j=0} X_{t_j,t_{j+1}}(\uu_{t_{j+1}}), 
\label{YQ6}
\end{align}

\noi
 where 
the limit is in the sense of Lemma \ref{LEM:sew}\,(iii).

\medskip

We are now ready to compare the normal form approach
with the sewing lemma approach.
Given  $\rho > \frac 54 $ and  $\frac 12  < \g < 1$, 
fix a 
 $(\rho,\g)$-irregular function $w$ on an interval $[0, T]$
 and $s\ge 0$ as in Theorem \ref{THM:UU1}.
Consider the modulated KdV \eqref{kdv1} on $\T$
and write 
\eqref{mild3x}
as
\begin{equation}
\uu = u_0 + \I^{X^\KDV}(\uu), 
\label{YZ1}
\end{equation}

\noi
Here, 
$X^\KDV$
is the bilinear driver defined in \eqref{K1}.
The main goal is to give a precise meaning
to 
the integral term $\I^{X^\KDV}(\uu)$.

\medskip

\noi
$\bul$ {\bf Sewing lemma approach:}
From 
Lemma \ref{LEM:nonlin1}, 
we have
$X\in C^\g_{2,T}\Lip_k(V)$.
Thus, 
if $\uu\in \CC^{\al}([0,T];H^s(\T))$
for some $0 < \al < 1$ such that $\g + \al > 1$, 
then
we can apply the sewing lemma 
(Lemma \ref{LEM:sew})
as described above
to construct
the nonlinear Young integral $\I^{X^\KDV}(\uu)$. 
From \eqref{YQ5} with \eqref{Ja1a}, we have 
\begin{align}
\I^{X^\KDV}(\uu)(t) - \I^{X^\KDV}(\uu)(r)
= X^\KDV_{t, r} (\uu(r)) - (\Ld \updl \Ta)_{t, r}
\label{YZ2}
\end{align}

\noi
for any $0 \le r < t \le T$, 
where $\Ld$ is the sewing map.

\medskip

\noi
$\bul$ {\bf Normal form approach:}
Let $\uu \in C([0, \tau]; H^s(\T))$
be the unique solution to \eqref{YZ1}
constructed in Theorem \ref{THM:UU1}.
Then, by repeating the computations in \eqref{NF2}
over an time interval $[r, t]$, 
we have
\begin{align}
\begin{split}
\I^{X^\KDV}(\uu)(t) - \I^{X^\KDV}(\uu)(r)
& = \uu(t) - \uu(r) \\
&  =  
 X^\KDV_{t, r}(\uu (r) )
+ 
\int_r^t 
\NN^\KDV_{t, t'}(\uu(t'))dt'
\end{split}
\label{YZ3}
\end{align}

\noi
for any $0 \le r < t \le \tau$, 
where $\NN^\KDV_{t, t'}$ is as in \eqref{nox1}.
We point out that in obtaining \eqref{YZ3}, 
we used the fact that $\uu$ is a solution to \eqref{YZ1}.

\medskip

By comparing \eqref{YZ2} and \eqref{YZ3}, 
we obtain 
\begin{align}
  (\Ld \updl \Ta)_{t, r}
 = 
- \int_r^t 
\NN^\KDV_{t, t'}(\uu(t'))dt'
\label{YZ4}
\end{align}

\noi
for any $0 \le r < t \le \tau$, 
where
the left-hand side makes sense
only for  $\uu\in \CC^{\al}([0,\tau];H^s(\T))$
with $\g + \al > 1$, 
whereas 
the right-hand side makes sense
for  $\uu\in C([0,\tau];H^s(\T))$.
Hence, we can view 
the right-hand side
of \eqref{YZ4}
as an extension of 
the definition of 
$\Ld \updl \Ta$ on the left-hand side
to functions of lower temporal regularity $\al \ge 0$, 
provided that $\uu$ is a solution to \eqref{YZ1}.
Namely, 
in the current modulated setting, 
{\it the normal form reduction
with the (controlled) structure~\eqref{YZ1}
extends 
the construction of the nonlinear Young integral
$\I^{X^\KDV}(\uu)$ 
to the much larger class 
$C([0,\tau];H^s(\T))$}, 
providing a concrete expression for $\Ld \updl \Ta$.
Moreover, it follows from Lemma \ref{LEM:nonlin2}
that the right-hand side of \eqref{YZ4}
has temporal regularity $1 + \g$
(and, as a result,  so does the left-hand side of~\eqref{YZ4})
which shows an improvement of the temporal regularity
from $\g + \al$ resulting from the application of the sewing lemma (Lemma \ref{LEM:sew}).
As a consequence, 
we see that  
$\I^{X^\KDV}(\uu)$ defined by the right-hand side of \eqref{YZ3}
is still given by the 
 unique limit of 
Riemann-Stieltjes type sums in \eqref{YQ6}.

\begin{remark}\label{REM:num}\rm

We  note that 
the nonlinear Young integral approach in \cite{CG1, CGLLO1, CGLLO2}
allows for a straightforward Euler approximation scheme
to discretize in time (see, for example,  \cite[Subsection~2.3]{CG1};
see also \cite[Subsection 3.3]{GubinelliKdV}).
Similarly, 
by repeating the computation
in \cite[Subsection~2.3]{CG1}
with  \eqref{YZ3} instead of \eqref{YZ2}, 
we obtain an analogous convergence result 
but with a faster convergence rate 
thanks to the higher temporal regularity $1+ \g$
of the second term on the right-hand side of \eqref{YZ3}
(as compared to 
$ - \Ld \updl \Ta$ in \eqref{YZ2}) as pointed above.
Moreover, this approximation scheme \eqref{YZ3} via the normal form approach
does {\it not} assume any positive temporal regularity on a solution,
which is a significant improvement.

We point out that ideas
analogous to the normal form approach 
(and the sewing lemma approach) have been implemented
in time discretization schemes; see, for example, 
\cite{HSch, BSch}.
We also mention 
the work 
\cite{HMSch} on convergence of a time-discretization scheme
for the modulated 
nonlinear Schr\"odinger equation.
For example, 
the approximation via the first order exponential integrator 
for the (unmodulated) KdV 
in \cite{HSch}
is given by 
\begin{align}
 \uu(t_j) 
\approx
 \uu(t_{j+1}) +  X^\KDV_{t_j, t_{j+1}}(\uu (t_{j+1}) ), 
\label{YZ5}
\end{align}

\noi
where $X^\KDV_{t, r}$
is as in 
\eqref{K1} with $w(t) = t$.
(Recall our convention $t_j > t_{j+1}$.)
Compare~\eqref{YZ5} with \eqref{YZ2} and \eqref{YZ3}, 
where the second terms on the right-hand sides of 
\eqref{YZ2} and \eqref{YZ3}
are now viewed as the error terms
in the discretization scheme \eqref{YZ5}, giving (local-in-time) convergence rates.
Note that \eqref{YZ2} and \eqref{YZ3}
hold only in the modulated setting.\footnote{In the unmodulated setting, 
by assuming a higher regularity for a  solution
to the (unmodulated) KdV, 
we can make sense of the second term 
on the right-hand side of \eqref{YZ3}
and carry out local-in-time and global-in-time error analysis;
see \cite{CFO}
for further details.}
In the unmodulated setting, 
we need to use the second order expansion
in \cite{Gub04} via the nonlinear controlled path approach
(and \cite{BIT} via the second normal form reduction)
to replace \eqref{YZ2} (and \eqref{YZ3}, respectively)
which still provides a convergence result.

\end{remark}

\section{Notations and preliminary tools}
\label{SEC:2}

Let $A\les B$ denote an estimate of the form $A\leq CB$ for some constant $C>0$. We write $A\sim B$ if $A\les B$ and $B\les A$, while $A\ll B$ denotes $A\leq c B$ for some small constant $c> 0$. 
We use $C>0$ to denote various constants, which may vary line by line.

In expressing the dependence of a function $u$
on the time variable, we often use the short-hand notation
$u_t = u(t)$,  which is standard in probability theory and stochastic analysis.

We  use $\ft f$ and $\F_x(f)$
to denote
the  Fourier transform
of a function $f$ on $\T$, 
defined by 
\begin{align*}
\ft  f(n)=\int_{\T}f(x)e^{-i n x} d x_\T
= \frac 1 {2\pi}\int_{\T}f(x)e^{-i n x} d x.
\end{align*}

\noi
We impose the mean-zero assumption
on  the equations we consider in this paper
(except for Appendix \ref{SEC:A})
and thus we set  $\Z_* = \Z \setminus \{0\}$.
Given $s \in \R$, 
we define the  Sobolev space $H^{s}(\T)$ 
via the norm: 
\begin{align*}
\|  f  \|^2_{H^{s}(\T)}
& =
\sum_{n \in \Z} \jb{n}^{2s} |\ft f(n)|^2.
  \end{align*}

We often use short-hand notations such as
$C_T H^s_x  = C\big([0, T]; H^s(\M))$, 
 when there is no ambiguity.

Given $k \in \N$
and Hilbert spaces $H_1, H_2$,  
we use 
\begin{align*}
\cL_k(H_1; H_2)
\end{align*}

\noi
 to denote 
the Banach space of bounded $k$-linear operators 
from  $H_1^{\otimes k}$ into $H_2$.
Given a multilinear operator $S$, 
we use the following short-hand notation:
\begin{align}
S(f) = S(f, \dots, f).
\label{short1}
\end{align}

\medskip

We recall the following basic lemma on a  discrete convolution
which follows
from an elementary  computation.
See, for example,  
 \cite[Lemma 4.2]{GTV}. 

\begin{lemma}\label{LEM:SUM}
Let  $\al, \be \in \R$ satisfy
\begin{align*}
 \al \ge \be \ge 0 \qquad \text{and}\qquad  \quad \al+ \be > 1.
\end{align*}

\noi
Then, we have
\begin{align*}
 \sum_{n = n_1 + n_2} \frac{1}{\jb{n_1}^\al \jb{n_2}^\be}
& \les \frac 1{\jb{n}^{ \be - \ld}}
\end{align*}

\noi
for any  $n \in \Z$, 
where $\ld = 
\max( 1- \al, 0)$ when $\al\ne 1$ and $\ld = \eps$ when $\al = 1$ for any $\eps > 0$.

\end{lemma}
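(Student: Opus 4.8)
The plan is to establish the stated bound pointwise in $n$ by decomposing the summation region according to the relative sizes of $\jb{n_1}$, $\jb{n_2}$, and $\jb{n}$. The basic structural input is the convolution constraint $n = n_1 + n_2$: if one of $\jb{n_1}, \jb{n_2}, \jb{n}$ were strictly larger than the other two, the identity $n = n_1 + n_2$ would be violated, so the two largest among these three quantities are always comparable. In particular $\max(\jb{n_1}, \jb{n_2}) \ges \jb{n}$, and the whole analysis reduces to extracting the full power $\jb{n}^{-(\be - \ld)}$ from whichever factor carries the frequency $\sim \jb{n}$, and then summing an elementary one-dimensional series in the remaining variable.

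Concretely, I would split into a far-field and a near-field region. \textbf{Far field} $|n_1| \ge 2|n|$: here $|n_2| = |n - n_1| \ge \frac12 |n_1|$, so $\jb{n_1} \sim \jb{n_2} \ges \jb{n}$ and both factors are large; bounding $\jb{n_1}^\al \jb{n_2}^\be \ges \jb{n_1}^{\al + \be}$ and using $\al + \be > 1$, the tail sum $\sum_{|n_1| \ge 2|n|}\jb{n_1}^{-(\al+\be)}$ converges and is $\les \jb{n}^{1-\al-\be}$. \textbf{Near field} $|n_1| < 2|n|$: then also $|n_2| \le 3|n|$, so both $\jb{n_1}, \jb{n_2} \les \jb{n}$, and I would further split according to whether $\jb{n_1} \ge \jb{n_2}$ or $\jb{n_1} < \jb{n_2}$. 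In the first case the comparability principle forces $\jb{n_1} \sim \jb{n}$, so pulling out $\jb{n_1}^{-\al} \sim \jb{n}^{-\al}$ leaves $\sum_{\jb{n_2}\les\jb{n}} \jb{n_2}^{-\be}$; in the second case $\jb{n_2}\sim\jb{n}$, leaving $\jb{n}^{-\be}\sum_{\jb{n_1}\les\jb{n}}\jb{n_1}^{-\al}$. Each residual sum is handled by the elementary estimate $\sum_{\jb{m}\les\jb{n}}\jb{m}^{-\ta} \les \jb{n}^{\max(1-\ta, 0)}$ (with a logarithm when $\ta = 1$).

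It remains to reconcile the three resulting bounds with the target exponent. The far-field and the $\jb{n_1}\ge\jb{n_2}$ near-field contributions both produce $\jb{n}^{-(\al+\be-1)}$ (the latter when $\be < 1$), and the required inequality $\al + \be - 1 \ge \be - \ld$ is precisely the definition $\ld = \max(1-\al, 0)$; the $\jb{n_1}<\jb{n_2}$ contribution produces $\jb{n}^{-\be}\cdot\jb{n}^{\max(1-\al,0)} = \jb{n}^{-(\be-\ld)}$ directly. \textbf{The main point to watch} is the borderline case $\al = 1$: there the one-dimensional sum $\sum_{\jb{n_1}\les\jb{n}}\jb{n_1}^{-1}$ diverges logarithmically, which is exactly why the statement allows the arbitrarily small loss $\ld = \eps$ at $\al = 1$, since $\jb{n}^{-\be}\log\jb{n} \les \jb{n}^{-(\be - \eps)}$. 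The cases $\be > 1$ (where $\al \ge \be > 1$ and $\ld = 0$, so $\jb{n}^{-\al} \le \jb{n}^{-\be}$ suffices) and $\be = 1$ are checked in the same way; since all estimates are elementary, no further auxiliary results are needed.
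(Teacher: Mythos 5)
Your argument is correct and complete: the far-field/near-field decomposition with the comparability principle $\max(\jb{n_1},\jb{n_2})\ges\jb{n}$, together with the elementary tail and partial-sum estimates, yields exactly the exponent $\be-\ld$ in all cases, including the logarithmic loss at $\al=1$. The paper does not prove this lemma itself but delegates it to \cite[Lemma 4.2]{GTV}, and your proof is the standard elementary computation alluded to there, so there is nothing further to reconcile.
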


\section{Modulated KdV}
\label{SEC:KDV}

We recall the following boundedness 
and nonlinear smoothing property 
of the bilinear operator $X^\KDV_{t, r}$;
see \cite[Proposition 4.1]{CGLLO1}.

\begin{lemma}\label{LEM:nonlin1}
Given $\rho \ge\frac 12$,  $\frac12< \g < 1$, and $T> 0$, 
let  $w$ be $(\rho,\g)$-irregular on $[0, T]$ in the sense of Definition~\ref{DEF:ir}.
Given $0 \le r < t \le T$, let $X^\KDV_{t, r}$
be as in \eqref{K1}.
Suppose that $s \in \R$ satisfies \eqref{nonlin1b}.
Then, 
we have 
$X^\KDV_{t, r} \in \L_2(H^s(\T); H^{s}(\T))$
with the following bound\textup{:}
\begin{align}
 \|X^\KDV_{t, r}\|_{\L_2(H^s; H^{s})}
\les 
(t- r)^\g  \|\Phi^w\|_{  \W^{\rho,\g}_T} 
\label{non1c}
\end{align}

\noi
for any $0 \le r < t \le T$, 
where the implicit constant is independent of $T > 0$.
In addition to~\eqref{nonlin1b}, suppose that $s_0 > s$ satisfies
one of the following conditions\textup{:}
\begin{align}
\begin{split}
\textup{(ii.a)} &\ \  
0 \le s + \rho \le  \tfrac 12
 \quad \text{and} \quad 
s_0 < 2s +  3\rho - \tfrac 32, \\
\textup{(ii.b)} &\ \  
s + \rho >  \tfrac 12 
\quad  \text{and} \quad s_0 \le s + 2\rho - 1.
\end{split}
\label{non1a}
\end{align}

\noi
Then, 
we have 
$X^\KDV_{t, r} \in \L_2(H^s(\T); H^{s_0}(\T))$
with the following bound\textup{:}
\begin{align*}
 \|X^\KDV_{t, r}\|_{\L_2(H^s; H^{s_0})}
\les 
(t-r)^\g  \|\Phi^w\|_{  \W^{\rho,\g}_T}
\end{align*}

\noi
for any $0 \le r < t \le T$.

\end{lemma}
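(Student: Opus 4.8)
The plan is to reduce the operator bound to a multiplier estimate on the Fourier side and then to apply the discrete convolution bound of Lemma \ref{LEM:SUM} together with the irregularity assumption \eqref{rho1}. Starting from the Fourier representation \eqref{K2}, I would write, for $f_1, f_2 \in H^s(\T)$,
\begin{align*}
\F\big(X^\KDV_{t,r}(f_1,f_2)\big)(n)
= in \sum_{\substack{n_1, n_2 \in \Z^*\\ n = n_1 + n_2}}
\Phi^w_{t,r}(\Xi_\KDV(\bar n))\, \ft f_1(n_1)\, \ft f_2(n_2),
\end{align*}
and then bound $|\Phi^w_{t,r}(\Xi_\KDV(\bar n))|$ using \eqref{rho1}. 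Since the last line of \eqref{K3a} gives $\Xi_\KDV(\bar n) = -3nn_1n_2$ under $n = n_1 + n_2$ with all frequencies nonzero, the irregularity bound yields
\begin{align*}
|\Phi^w_{t,r}(\Xi_\KDV(\bar n))|
\les (t-r)^\g \, \|\Phi^w\|_{\W^{\rho,\g}_T} \, \jb{n n_1 n_2}^{-\rho}.
\end{align*}
The factor $(t-r)^\g \|\Phi^w\|_{\W^{\rho,\g}_T}$ then pulls out of the whole estimate, matching the right-hand side of the claimed bound, and the remaining task is purely a frequency-space multiplier estimate independent of $t, r, w$.

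For the first bound \eqref{non1c}, I would estimate the $H^{s}$-norm by duality or directly via Cauchy-Schwarz: after inserting $\jb{n}^{s}$ and the gain $\jb{nn_1n_2}^{-\rho}$, the prefactor $n$ from the derivative contributes $\jb{n}$, so one must control
\begin{align*}
\jb{n}^{s+1} \sum_{n = n_1 + n_2} \frac{\jb{nn_1n_2}^{-\rho}}{\jb{n_1}^{s}\jb{n_2}^{s}} \, \jb{n_1}^{s}|\ft f_1(n_1)| \, \jb{n_2}^{s}|\ft f_2(n_2)|.
\end{align*}
Using $\jb{nn_1n_2}^{-\rho} \sim \jb{n}^{-\rho}\jb{n_1}^{-\rho}\jb{n_2}^{-\rho}$ (valid when all frequencies are comparable to their product, and otherwise splitting into frequency regimes), and applying Cauchy-Schwarz in $(n_1, n_2)$, the estimate reduces to a supremum over $n$ of a discrete convolution of the form in Lemma \ref{LEM:SUM}. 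The two cases in \eqref{nonlin1b}, namely $\frac12 \le \rho \le \frac34$ with $s > \frac32 - 3\rho$ versus $\rho > \frac34$ with $s \ge -\rho$, correspond exactly to whether the summability is governed by the threshold $\al + \be > 1$ in Lemma \ref{LEM:SUM} with $\al, \be$ built from $s + \rho$; the casework on $s + \rho$ relative to $\frac12$ is what produces these two regimes.

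For the nonlinear smoothing bound, the argument is structurally identical but now I place $\jb{n}^{s_0}$ in the output norm instead of $\jb{n}^{s}$, so the multiplier must absorb an extra $\jb{n}^{s_0 - s}$ derivative loss. This is exactly where the conditions \eqref{non1a} enter: the gain $\jb{n}^{-\rho}$ from the resonance factor must beat the combined loss $\jb{n}^{s_0 - s + 1}$, while keeping the $(n_1, n_2)$-summation convergent. The two cases in \eqref{non1a}, split again by whether $s + \rho \le \frac12$ or $s + \rho > \frac12$, track whether the limiting constraint is the convergence of the sum (giving the sharper condition $s_0 < 2s + 3\rho - \frac32$) or the pointwise decay in $n$ (giving $s_0 \le s + 2\rho - 1$). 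I expect the main obstacle to be the careful frequency-space case analysis needed to distribute the weight $\jb{nn_1n_2}^{-\rho}$ optimally among the three frequencies: in the regime where one frequency is much larger than the others, the naive splitting $\jb{nn_1n_2}^{-\rho} \sim \jb{n}^{-\rho}\jb{n_1}^{-\rho}\jb{n_2}^{-\rho}$ fails, and one must instead exploit that two of the three frequencies are comparable to extract the correct power, then verify that each resulting subcase still satisfies the hypotheses of Lemma \ref{LEM:SUM}. Since this bound is quoted directly from \cite[Proposition 4.1]{CGLLO1}, I would cite that reference for the full frequency analysis and present only the reduction to the convolution sum here.
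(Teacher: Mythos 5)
Your proposal is correct and matches the paper's treatment: the paper itself gives no proof of this lemma but quotes it from \cite[Proposition 4.1]{CGLLO1}, and your reduction --- factoring out $(t-r)^\g\|\Phi^w\|_{\W^{\rho,\g}_T}$ via \eqref{rho1}, using $\Xi_\KDV(\bar n)=-3nn_1n_2$ to gain $\jb{nn_1n_2}^{-\rho}\les\jb{n}^{-\rho}\jb{n_1}^{-\rho}\jb{n_2}^{-\rho}$ (which in fact holds unconditionally on $\Z^*$, so no splitting is needed there), then closing with Cauchy--Schwarz and Lemma \ref{LEM:SUM} --- is exactly the scheme the paper uses for the analogous trilinear bound in Lemma \ref{LEM:nonlin2} and describes in Remark \ref{REM:non1}\,(ii). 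Deferring the remaining frequency casework (needed to recover the precise thresholds in \eqref{nonlin1b} and \eqref{non1a}, where a purely symmetric distribution of the weight is not quite sufficient in the regime $s+\rho\le\frac12$) to the cited reference is consistent with what the paper itself does.
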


From \eqref{non1a}, we see that, 
given any $s, s_0 \in \R$ with $s_0 \ge s$
(even if $s_0$ is much larger than~$s$), 
the bilinear operator $X^\KDV_{t, r}$
is bounded 
 from $H^s(\T) \times H^s(\T)$ into $H^{s_0}(\T)$, 
 provided that $\rho$ is sufficiently large.

We have the following boundedness property for
the trilinear operator  $\NN^\KDV_{t, r}$
defined in~\eqref{nox1}.

\begin{lemma}\label{LEM:nonlin2}
Given $\rho \ge\frac 12$,  $\frac12< \g < 1$, and $T> 0$, 
let  $w$ be $(\rho,\g)$-irregular on $[0, T]$ in the sense of Definition~\ref{DEF:ir}.
Given $0 \le r < t \le T$, let $\NN^\KDV_{t, r}$
be as in \eqref{nox1}.
Suppose that $s\ge 0$ and $s_0 \in \R$ satisfy
\begin{align}
s + \rho \ge  1
\label{nox1a}
\end{align}

\noi
and 
\begin{align}
s_0 < s + 2\rho - \frac 52.
\label{nox1b}
\end{align}

\noi
Then, 
$\NN^\KDV_{t, r}$  belongs to $\L_3(H^s(\T); H^{s_0}(\T))$, 
satisfying
 the following bound\textup{:}
\begin{align}
 \|\NN^\KDV_{t, r}\|_{\L_3(H^s; H^{s_0})}
\les 
(t- r)^\g  \|\Phi^w\|_{  \W^{\rho,\g}_T}
\label{nox1c}
\end{align}

\noi
for any $0 \le r < t \le T$, 
where the implicit constant is independent of $T > 0$.

\end{lemma}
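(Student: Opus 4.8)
The plan is to prove \eqref{nox1c} by a direct Fourier-side estimate on the trilinear multiplier in \eqref{nox1}, combining the irregularity bound \eqref{rho1} for $\Phi^w_{t,r}$ with the lower bounds on the KdV resonance function \eqref{K3a} and the discrete convolution estimate of Lemma~\ref{LEM:SUM}. First I would fix $(t,r)\in\Delta_{2,T}$ and write, by duality, $\|\NN^\KDV_{t,r}(f_1,f_2,f_3)\|_{H^{s_0}}$ as a sum over $n=n_{123}$ with $n_j\in\Z^*$ of the multiplier $M(\bar n)=n\,\Phi^w_{t,r}(\Xi_\KDV(n,n_{12},n_3))\,n_{12}$ against $\prod_j \ft f_j(n_j)$, paired with a test sequence $\ft g(n)$ of unit $H^{-s_0}$ norm. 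The key pointwise input is that, using \eqref{rho1} together with the resonance identity $\Xi_\KDV(n,n_{12},n_3)=-3\,n\,n_{12}\,n_3$ (valid since $n=n_{12}+n_3$), one has
\begin{align*}
|\Phi^w_{t,r}(\Xi_\KDV(n,n_{12},n_3))|
\les (t-r)^\g\,\|\Phi^w\|_{\W^{\rho,\g}_T}\,
\jb{n\,n_{12}\,n_3}^{-\rho},
\end{align*}
so that all the temporal structure is extracted as the factor $(t-r)^\g\|\Phi^w\|_{\W^{\rho,\g}_T}$, leaving a purely spatial (frequency) estimate to be closed.

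Next I would reduce to a deterministic multilinear bound: after pulling out $(t-r)^\g\|\Phi^w\|_{\W^{\rho,\g}_T}$, it remains to show that
\begin{align*}
\sum_{n=n_{123}}
\frac{\jb n^{s_0}}{\jb{n\,n_{12}\,n_3}^{\rho}}
\,|n|\,|n_{12}|
\prod_{j=1}^3 \ft f_j(n_j)\,\ft g(n)
\end{align*}
is controlled by $\prod_j\|f_j\|_{H^s}\,\|g\|_{H^{-s_0}}$. The factor $\jb{n\,n_{12}\,n_3}^{-\rho}$ is where the smoothing from irregularity is spent: it must absorb the derivative loss $|n|\,|n_{12}|$ coming from the two spatial derivatives in the nonlinearity (one from the original $\dx u^2$, one from the inner $\dx$ produced by the normal form reduction, encoded in the $n_{12}$ factor), and also supply the spatial gain up to regularity $s_0$. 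The natural way to organize this is to distribute the power $\rho$ on $\jb{n}$, $\jb{n_{12}}$, and $\jb{n_3}$ — since $|n\,n_{12}\,n_3|\sim\jb n\jb{n_{12}}\jb{n_3}$ on the support where $n,n_{12},n_3$ are all nonzero — and then perform the two consecutive sums ($n_1+n_2=n_{12}$, then $n_{12}+n_3=n$) via Lemma~\ref{LEM:SUM}, placing weights $\jb{n_j}^{-s}$ on the inputs and $\jb n^{s_0}$ on the output after Cauchy–Schwarz.

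The main obstacle I expect is the bookkeeping of exponents to verify that the two applications of Lemma~\ref{LEM:SUM} converge precisely under the hypotheses \eqref{nox1a}--\eqref{nox1b}, rather than any conceptual difficulty. Concretely, one must check that the constraint $\al+\be>1$ in Lemma~\ref{LEM:SUM} is met at each summation stage: the condition $s+\rho\ge1$ in \eqref{nox1a} is exactly what is needed to make the inner sum over $(n_1,n_2)$ at fixed $n_{12}$ summable after allocating part of $\rho$ to tame the $\jb{n_{12}}$ from the derivative, while the loss of $\tfrac52$ and the factor $2\rho$ in the threshold $s_0<s+2\rho-\tfrac52$ in \eqref{nox1b} should emerge from summing $|n|\,|n_{12}|\jb{n\,n_{12}\,n_3}^{-\rho}\jb n^{s_0}$ in the worst frequency configurations (e.g.\ when two of the frequencies are comparably large and the third is the resonant small one). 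The care required is to handle the various cases according to which of $|n|,|n_{12}|,|n_3|$ dominates, and to confirm that distributing $\rho$ across the three weights never violates the $\al\ge\be\ge0$ ordering; once the exponent arithmetic checks out in each regime, Cauchy--Schwarz and Lemma~\ref{LEM:SUM} close the estimate and yield \eqref{nox1c}.
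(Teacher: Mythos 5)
Your proposal matches the paper's proof in all essentials: pull out $(t-r)^\g\|\Phi^w\|_{\W^{\rho,\g}_T}$ via \eqref{rho1}, factor $\jb{\Xi_\KDV(n,n_{12},n_3)}^{-\rho}\les(\jb{n}\jb{n_{12}}\jb{n_3})^{-\rho}$ using the complete factorization of the resonance function, bound the inner $(n_1,n_2)$-convolution at fixed $n_{12}$ by Cauchy--Schwarz together with $\jb{n_{12}}^s\les\jb{n_1}^s\jb{n_2}^s$, and close the remaining $(n,n_3)$-sum with Cauchy--Schwarz and Lemma~\ref{LEM:SUM}. Two minor corrections to your bookkeeping: no case analysis on which of $|n|,|n_{12}|,|n_3|$ dominates is needed here (that is only required for the BO analogue, where the resonance function does not factor), and the hypothesis $s+\rho\ge1$ enters in the outer sum over $n_3$ (to guarantee the exponent $2\rho-2+2s$ on $\jb{n-n_3}$ is nonnegative so that Lemma~\ref{LEM:SUM} applies), not in the inner $(n_1,n_2)$-sum, which needs only $s\ge0$.
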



\begin{proof}

From \eqref{nox1} with \eqref{rho1} and \eqref{K3a}, we have
\begin{align}
 \| \NN^\KDV_{t, r}(f_1, f_2, f_3)\|_{H^{s_0}}
&  \le
(t- r)^\g \|\Phi^w\|_{  \W^{\rho,\g}_T}M^\KDV(f_1, f_2, f_3)
\label{nox2}
\end{align}

\noi
for any $0 \le r < t \le T$, 
where $M^\KDV(f_1, f_2, f_3)$ is given by 
\begin{align}
\begin{split}
& M^\KDV(f_1, f_2, f_3)\\
& \ \  =\bigg\|
\sum_{\substack{n_1, n_2, n_3 \in \Z_*\\n = n_{123}}}
\frac{1}{\jb{n}^{\rho-s_0 - 1}
\jb{n_{12}}^{\rho-1}
\jb{n_1}^{s}\jb{n_2}^{s}\jb{n_3}^{\rho+s}} 
\prod_{j = 1}^3\jb{n_j}^s|\ft f_j(n_j)|
\bigg\|_{\l^2_n}.
\end{split}
\label{nox3}
\end{align}

From \eqref{nox3}, the triangle inequality $\jb{n_{12}}^s \les \jb{n_1}^s\jb{n_2}^s$
for $s \ge 0$, and Cauchy-Schwarz's inequality, we have 
\begin{align}
\begin{split}
 M^\KDV(f_1, f_2, f_3)
&   \le \bigg\|
\sum_{ n_3 \in \Z}
\frac{1}{\jb{n}^{\rho-s_0 - 1}
\jb{n - n_3}^{\rho-1+s}
\jb{n_3}^{\rho+s}} 
\jb{n_3}^s|\ft f_j(n_3)|\bigg\|_{\l^2_n}\\
 & \hphantom{XX} \times 
\sup_{n \in Z^*}\sum_{ n_1 \in \Z}
\ind_{n = n_{123}}
\prod_{j = 1}^2
\jb{n_j}^s|\ft f_j(n_j)|
\\
& 
\le 
 J^\KDV\cdot \prod_{j = 1}^3\|f_j\|_{H^s}, 
\end{split}
\label{nox4}
\end{align}

\noi
where  $ J^\KDV$ is given by 
\begin{align*}
 J^\KDV
= \bigg(\sum_{n,  n_3 \in \Z}
\frac{1}{\jb{n}^{2\rho-2s_0 - 2}
\jb{n - n_3}^{2\rho-2+2s}
\jb{n_3}^{2\rho+2s}} 
\bigg)^\frac 12.
\end{align*}

\noi
By
Lemma \ref{LEM:SUM}
in summing first in $n_3$ and then in $n$, we have $J^\KDV < \infty$, 
\noi
provided that 
$s + \rho  \ge 1$ and $s_0 < s + 2\rho - \frac 52$.
Then, the desired bound \eqref{nox1c} follows
from \eqref{nox2} and \eqref{nox4},  
provided that \eqref{nox1a}
and \eqref{nox1b}
hold.
\end{proof}

\begin{remark}\label{REM:non1}\rm
(i) By considering the contribution from 
the case $|n_1|\sim |n_2|\gg |n|, |n_3|, |n_{12}|$
in \eqref{nox1}, 
it is easy to see that the 
bound \eqref{nox1c} does not hold for $s < 0$.

\smallskip

\noi
(ii) 
By  the so-called Cauchy-Schwarz's argument
(see the proof of \cite[Proposition 4.1]{CGLLO1}), 
\begin{align*}
 \|\NN^\KDV_{t, r}(f_1, f_2, f_3)\|_{H^{s_0}}
 \le
(t-r)^\g  \|\Phi^w\|_{  \W^{\rho,\g}_T} 
\prod_{j = 1}^3\|f_j\|_{H^s}
\cdot \min_{j = 1, \dots 4}\sup_{n_j \in \Z_*} J_{n_j}, 
\end{align*}

\noi
where (by setting $n_4 = -n$) $J_{n_j}$ is given by 
\begin{align*}
J_{n_j} 
& = \bigg(
\sum_{\substack{n_1, n_2, n_3, n_4 \in \Z_*\\ n_{1234}= 0}}
\frac{\ind_{n_{12}\ne 0}}{\jb{n_4}^{2(\rho-s_0 - 1)}
\jb{n_{12}}^{2(\rho-1)}
\jb{n_1}^{2s}\jb{n_2}^{2s}\jb{n_3}^{2(\rho+s)}} 
 \bigg)^\frac{1}{2}.
\end{align*}

\noi
Then, we  bound $J_{n_j}$, using Lemma \ref{LEM:SUM}.
When $j = 3, 4$, 
this argument yields worse conditions than \eqref{nox1a} and \eqref{nox1b}.
When $j = 1, 2$, 
this argument also provides the conditions~\eqref{nox1a} and~\eqref{nox1b}.

\end{remark}

We conclude this section by  presenting a proof of Theorem \ref{THM:UU1}.

\begin{proof}[Proof of Theorem \ref{THM:UU1}]
(i) Let $\G(u) = \G_{u_0}(u)$ be the right-hand side of \eqref{NF4a}.
Under the hypothesis of Theorem~\ref{THM:UU1}\,(i), 
it follows from 
 Lemmas \ref{LEM:nonlin1} and 
\ref{LEM:nonlin2}
that 
\begin{align*}
& \| \G(u) \|_{C_\tau H^s_x}
 \le \| u_0\|_{H^s}
+ C\tau^\g
  \|\Phi^w\|_{  \W^{\rho,\g}_T} 
  \Big(\|u\|_{C_\tau H^s_x}^2 + \|u\|_{C_\tau H^s_x}^3\Big),\\
& \| \G(u)  - \G(v) \|_{C_\tau  H^s_x}\\
& \quad \le 
 C\tau^\g
  \|\Phi^w\|_{  \W^{\rho,\g}_T} 
\Big(1+ \|u\|_{C_\tau H^s_x}^2 + \|v\|_{C_\tau H^s_x}^2\Big) 
\|u- v \|_{C_\tau H^s_x}.
\end{align*}

\noi
for any $0 < \tau \le \min(T, 1)$.
Then, 
 a standard contraction argument
yields unconditional  local well-posedness
of \eqref{NF4a}
with a unique solution $u \in C([0, \tau]; H^s(\T))$, 
where the local existence time 
$\tau > 0$
depends only on 
$\|u_0\|_{H^s}$ and   $\|\Phi^w\|_{  \W^{\rho,\g}_T} $.
Here, the uniqueness of the solution a priori holds only in the ball 
$B_R \subset C([0, \tau]; H^s(\T))$
of radius $R \sim \|u_0\|_{H^s}$ centered at the origin
but, by a standard continuity argument, 
we can extend the uniqueness to the entire space $C([0, \tau]; H^s(\T))$
(by possibly making $\tau$ smaller by a multiplicative constant).
Moreover, by a standard persistence-of-regularity argument, 
we can easily show that the local existence time $\tau$
in fact depends only on 
$\|u_0\|_{L^2}$ and   $\|\Phi^w\|_{  \W^{\rho,\g}_T} $.
Since the argument is standard, we omit details.\footnote{See \cite[Proposition 4.1\,(i)]{CGLLO1}
for the persistence-of-regularity property for the bilinear operator $X_{t, 0}$ defined in \eqref{K1}.
For the trilinear operator $\NN_{t, r}$ defined in \eqref{nox1}, 
we can simply repeat the proof of Lemma \ref{LEM:nonlin2}
with $s_0 = s = 0$
by inserting $\jb{n}^s \les \jb{n_1}^s + \jb{n_2}^s+ \jb{n_3}^s$.
}

Next, we show that the solution $u$ to the normal form equation \eqref{NF4a}
constructed above 
is indeed a solution to the original modulated KdV \eqref{kdv1}.
From the contraction argument above, 
we also have the following Lipschitz bound:
\begin{align}
\| u^{(1)} - u^{(2)}\|_{C_\tau H^s_x}
\les \| u_0^{(1)} - u_0^{(2)}\|_{H^s}
\label{NF4c}
\end{align}

\noi
for any solution $u^{(j)}$  to the normal form equation \eqref{NF4a}
with initial data $u_0^{(j)}$, $j = 1, 2$, 
constructed by the contraction argument described above, 
where $\tau > 0$ denotes the minimum of the 
local existence time for $u^{(j)}$, $j = 1, 2$.
Now, given $u_0 \in H^s(\T)$ and
 a $(\rho,\g)$-irregular function $w$ on $[0, T]$, 
let $u \in C([0, \tau]; H^s(\T))$ be the solution  to \eqref{NF4a}
with $u|_{t = 0} = u_0$, 
and 
$v \in C^\g([0, T]; H^s(\T))$ be the (global-in-time) solution to the 
modulated KdV \eqref{kdv1} with $v|_{t = 0} = u_0$ constructed in \cite{CGLLO1}.
By repeating the computations presented in Subsection~\ref{SUBSEC:NF}
(see also the discussion below for the justification
of the formal steps in~\eqref{NF2}
for functions in $C([0, T]; L^2(\T))$), 
we see that $v$ also satisfies the normal form equation~\eqref{NF4a}
with initial data $u_0$.
Therefore, from \eqref{NF4c}, we conclude that $v \equiv u$ on the time interval $[0, \tau]$
and thus that $u$ satisfies the original modulated KdV \eqref{kdv1}
(more precisely, the Duhamel formulation~\eqref{mild1x}).

It remains to  justify the formal steps in \eqref{NF2}.
For each fixed  $n \in \Z_*$, we need to justify
the following three steps:

\begin{itemize}

\smallskip

\item[(a)] 
switching of the time integration
and the summation at the second equality in \eqref{NF2},

\smallskip

\item[(b)] application of the product rule
on $\ft \uu(t', n_1)\ft \uu(t', n_2)$ 
at the third equality 
in \eqref{NF2},

\smallskip

\item[(c)] 
integration by parts
at the third equality 
in~\eqref{NF2}.

\end{itemize}

\smallskip

\noi
Fix  $u \in C([0, \tau]; L^2(\T))$ for some $0 < \tau \le T$.
(a)~Since we have
\begin{align*}
\sup_{0 \le t' \le \tau} \sum_{\substack{n_1, n_2 \in \Z_*\\n = n_1 + n_2}}
|e^{i  \Xi_\KDV (\bar n)w(t')} \ft \uu(t', n_1)\ft \uu(t', n_2) |
\le \| \uu \|_{C_\tau L^2_x}^2
\end{align*}

\noi
for each fixed  $n \in \Z_*$,
the second equality of \eqref{NF2}
follows from 
the Fubini  theorem.
(b)~From~\eqref{kdv1b}, we have 
\begin{align*}
|\dt \ft \uu (t, n)| \le |n| \| u (t) \|_{L^2_x}^2
\end{align*}

\noi
for any $0 \le t \le T$.
In particular, 
for each fixed  $n \in \Z_*$,
we have $\ft \uu (\cdot, n)$
is a $C^1$-function in $t$, 
which justifies the application of 
the product rule 
at the third step in \eqref{NF2}.
(c)~Furthermore, by noting from~\eqref{rho2} that 
$\Phi^w_{t_1, t_2}(\Xi_\KDV (\bar n))$
is a $C^1$-function in $t_2$, 
the 
integration by parts
at the third equality 
in~\eqref{NF2}
is justified.
The discussion above shows that 
any solution $u \in C([0, \tau]; L^2(\T))$ to 
the Duhamel formulation \eqref{mild1x} of the modulated KdV \eqref{kdv1}
with $u|_{t = 0} = u_0$ for some $0 < \tau \le T$
(which does not have to be the solution constructed above
or that constructed in \cite{CGLLO1})
is also a solution the normal form equation~\eqref{NF4a}, 
which proves unconditional uniqueness
in the entire space $C([0, \tau]; L^2(\T))$
without intersecting with any auxiliary space.

Lastly, suppose in addition that 
 $w$ is  $(\rho,\g)$-irregular  on $\R_+$.
Then, the claimed unconditional global well-posedness of
the modulated KdV \eqref{kdv1}
in $H^s(\T)$ follows from 
the global well-posedness proven in \cite{CGLLO1}
and the local-in-time unconditional uniqueness
discussed above which can be iteratively applied
to yield global-in-time unconditional uniqueness.

\medskip

\noi
(ii) 
The claimed nonlinear smoothing follows
from applying
 Lemmas \ref{LEM:nonlin1} and 
\ref{LEM:nonlin2}
to the normal form equation \eqref{NF4a}. 
\end{proof}

\section{Modulated BO}
\label{SEC:BO}

We present a proof of Theorem \ref{THM:UU2}
for  the modulated BO \eqref{BO} on $\T$.
The bilinear driver associated with 
the modulated BO~\eqref{BO} on $\T$ is given by 
\begin{align}
X^\BO_{t,r}( f _1, f _2)
=\int_r^t  \uw  (t')^{-1}
\dx \big( (\uw (t')   f _1)( \uw (t')  f_2)\big) dt',
\label{BX1}
\end{align}

\noi
where 
\begin{align*}
\uw (t)=e^{w(t) \H\dx^2 }
\end{align*}

\noi
denotes 
the modulated linear propagator
for \eqref{BO}.
By taking the Fourier transform, we have 
\begin{align*}
\F\big( X^\BO_{t,r}  ( f _1, f _2) \big)(n)  =
in  \sum_{ \sub {n_1, n_2 \in \Z_*\\ n = n_1+n_2}}
 \Phi_{t,r}^{w}(\Xi_\BO (\bar n))\ft  f_1 ( n_1 )  \ft   f_2 (n_2), 
\end{align*}

\noi
where $\Phi_{t,r}^w$ is as in  \eqref{rho2}
and 
$\Xi_\BO (\bar n)$ denotes the resonance function for BO given by 
\begin{align}
\Xi_\BO (\bar n)
= \Xi_\BO(n,n_1,n_2) = - |n| n +  | n_1 |n_1 +  |n_2| n_2.
\label{BX4}
\end{align}

\noi
Given $0 \le r< t  \le T$, define the trilinear operator  $\NN^\BO_{t, r}(f_1, f_2, f_3)$, acting on functions on $\T$, 
 by 
\begin{align}
\begin{split}
& \F_x\big(\NN^\BO_{t, r}(f_1, f_2, f_3)\big)(n)\\
& \quad =   - 
2 n  \sum_{\substack{n_1, n_2, n_3 \in \Z_*\\n = n_{123}}}
 \Phi^w_{t, r}(\Xi_\BO (n, n_{12}, n_3))
e^{i  \Xi_\BO (n_{12}, n_1, n_2)w(r)}n_{12} 
\prod_{j = 1}^3\ft f_j(n_j), 
\end{split} 
\label{BX5}
\end{align}

\noi
where we used the convention \eqref{no1}.
Let $\uu(t) = \uw(t)^{-1} u(t)$ denote the 
modulated interaction representation 
of a solution $u$ to  (the Duhamel formulation of)~\eqref{BO}.
Then, 
proceeding as in Subsection \ref{SUBSEC:NF}, 
we have the following normal form equation for 
the 
modulated BO \eqref{BO}:
\begin{align*}
\ft \uu(t, n) - \ft \uu(0, n) 
 = \F_x\big(X^\BO_{t, 0}(\uu(0))\big)(n)
 +  \int_0^t 
\F_x\big(\NN^\BO_{t, t'}(\uu(t'))\big)(n)dt', 
\end{align*}

\noi
where we used the short-hand notation \eqref{short1}.

We recall the following boundedness 
and nonlinear smoothing property 
of the bilinear operator $X^\BO_{t, r}$;
see \cite[Proposition 5.1]{CGLLO1}.

\begin{lemma}\label{LEM:nonlin3}
Given $\rho \ge 1$,  $\frac12< \g < 1$, and $T> 0$, 
let  $w$ be $(\rho,\g)$-irregular on $[0, T]$ in the sense of Definition~\ref{DEF:ir}.
Given $0 \le r < t \le T$, let $X^\BO_{t, r}$
be as in \eqref{BX1}.
Suppose that $s \in \R$ satisfies~\eqref{regBO1}.
Then, 
we have $X_{t, r}^\BO \in \L_2(H^s(\T); H^{s}(\T))$
with the following bound\textup{:}
\begin{align*}
 \|X^\BO_{t, r}\|_{\L_2(H^s; H^{s})}
\les 
(t-r)^\g  \|\Phi^w\|_{  \W^{\rho,\g}_T}
\end{align*}

\noi
for any $0 \le r < t \le T$, 
where the implicit constant is independent of $T > 0$.
In addition to~\eqref{regBO1}, suppose that 
$\rho  > 1$ and $s_0 > s$ satisfy
one of the following conditions\textup{:}
\begin{align*}
\textup{(ii.a)} &\ \
0 \le s < s_0 \le \rho - 1,\\
\textup{(ii.b)} &\ \
 s <0
\quad 
\text{and}\quad 
s_0 \le s + \rho - 1.
\end{align*}

\noi
Then, 
we have 
$X^\BO_{t, r} \in \L_2(H^s(\T); H^{s_0}(\T))$
with the following bound\textup{:}
\begin{align*}
 \|X^\BO_{t, r}\|_{\L_2(H^s; H^{s_0})}
\les 
(t- r)^\g  \|\Phi^w\|_{  \W^{\rho,\g}_T}
\end{align*}

\noi
for any $0 \le r < t \le T$.

\end{lemma}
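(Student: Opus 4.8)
The plan is to follow the scheme of the proof of Lemma~\ref{LEM:nonlin2}, the only genuinely new ingredient being a pointwise lower bound on the Benjamin--Ono resonance function $\Xi_\BO(\bar n)$ in \eqref{BX4}; unlike $\Xi_\KDV(\bar n) = -3nn_1n_2$, the function $\Xi_\BO$ does not factorize, so this lower bound must be extracted by hand. First I would record the Fourier-side bound. Starting from the definition \eqref{BX1} of $X^\BO_{t,r}$ and replacing $\Phi^w_{t,r}(\Xi_\BO(\bar n))$ by $\|\Phi^w\|_{\W^{\rho,\g}_T}(t-r)^\g\jb{\Xi_\BO(\bar n)}^{-\rho}$ via the $(\rho,\g)$-irregularity \eqref{rho1}, one obtains, for every $0 \le r < t \le T$,
\[
\|X^\BO_{t,r}(f_1,f_2)\|_{H^{s_0}} \le (t-r)^\g\|\Phi^w\|_{\W^{\rho,\g}_T}\, M^\BO(f_1,f_2),
\]
where
\[
M^\BO(f_1,f_2) = \bigg\| \sum_{\substack{n_1,n_2\in\Z^*\\ n=n_1+n_2}} \frac{\jb{n}^{s_0}|n|}{\jb{\Xi_\BO(\bar n)}^{\rho}\jb{n_1}^{s}\jb{n_2}^{s}} \prod_{j=1}^2 \jb{n_j}^{s}|\ft f_j(n_j)| \bigg\|_{\ell^2_n}.
\]

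Second --- and this is the heart of the argument --- I would prove
\[
|\Xi_\BO(\bar n)| \gtrsim n_{\max} := \max(|n|,|n_1|,|n_2|), \qquad n=n_1+n_2,\ n_1 n_2 \ne 0,
\]
by splitting according to the signs of $n_1$ and $n_2$. Writing $|m|m = \sgn(m)m^2$, a direct computation gives $|\Xi_\BO(\bar n)| = 2|n_1||n_2|$ when $n_1, n_2$ share a sign, and $|\Xi_\BO(\bar n)| = 2|n||n_j|$ (with $n_j$ the factor of sign opposite to $n$) when $n_1, n_2$ have opposite signs. In each regime one factor is comparable to $n_{\max}$ while the other is $\ge 1$, which yields the claim; moreover these identities show $\jb{\Xi_\BO(\bar n)} \sim \jb{n_1}\jb{n_2}$ in the same-sign case and $\jb{\Xi_\BO(\bar n)} \sim \jb{n}\jb{n_j}$ in the opposite-sign case, which is exactly the information needed to run the summation.

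Third, I would carry out the Cauchy--Schwarz argument, as in Remark~\ref{REM:non1}\,(ii). Cauchy--Schwarz in the convolution variable reduces matters to the kernel bound
\[
M^\BO(f_1,f_2) \le \Big(\sup_{n\in\Z^*} \jb{n}^{2s_0+2}\, K(n)\Big)^{1/2}\|f_1\|_{H^s}\|f_2\|_{H^s}, \qquad K(n) = \sum_{\substack{n_1,n_2\in\Z^*\\ n=n_1+n_2}} \frac{1}{\jb{\Xi_\BO(\bar n)}^{2\rho}\jb{n_1}^{2s}\jb{n_2}^{2s}},
\]
so it remains to verify $\sup_n \jb{n}^{2s_0+2}K(n) < \infty$. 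Using the explicit expressions from the second step, I would split $K(n)$ into the sign cases: in the same-sign case the resonance weight factors as $\jb{n_1}^{2\rho}\jb{n_2}^{2\rho}$ and Lemma~\ref{LEM:SUM} yields decay $\jb{n}^{-(2\rho+2s)}$; in the opposite-sign cases I would first pull out $\jb{n}^{-2\rho}$ and then sum the remaining one-dimensional convolution by Lemma~\ref{LEM:SUM}. The hypotheses \eqref{regBO1} (for the $H^s\to H^s$ bound, $s_0=s$) and the stated conditions (ii.a)--(ii.b) on $(s,s_0)$ (for the smoothing bound) are precisely what make every case summable and keep $\jb{n}^{2s_0+2}K(n)$ bounded in $n$.

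The main obstacle will be the interaction between the second and third steps. Since $\Xi_\BO$ gains only a \emph{single} power of $n_{\max}$ in the mixed-sign configurations (high$\times$low producing a comparable output, or high$\times$high producing a low output), the lone factor $\jb{\Xi_\BO(\bar n)}^{-\rho}$ must simultaneously absorb the derivative loss $|n|$ coming from the nonlinearity and supply the smoothing weight $\jb{n}^{s_0}$; this is what forces the ceiling on $s_0$ and distinguishes $s \ge 0$ from $s<0$ in the two conditions. The delicate bookkeeping is therefore to identify, in each sign configuration, which frequency is $n_{\max}$ and to check that the weights $\jb{n_j}^{2s}$ do not destroy summability when $s<0$; once this is done, every case collapses to an application of Lemma~\ref{LEM:SUM}, exactly as in the proof of Lemma~\ref{LEM:nonlin2}.
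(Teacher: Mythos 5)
Two preliminary observations. First, the paper does not actually prove this lemma: it is recalled verbatim from \cite[Proposition 5.1]{CGLLO1}, so your proposal can only be compared with that cited argument and with the analogous estimates proved in this paper (Lemma \ref{LEM:nonlin4} and Remark \ref{REM:non1}). Second, much of what you wrote is correct: the reduction to the multiplier bound via \eqref{rho1}, the sign-by-sign identities $|\Xi_\BO(\bar n)| = 2|n_1||n_2|$ (same signs) and $|\Xi_\BO(\bar n)| = 2|n||n_j|$ with $n_j$ of sign opposite to $n$ (opposite signs), hence $\jb{\Xi_\BO}\sim \jb{n_1}\jb{n_2}$ resp.\ $\jb{n}\jb{n_j}$, are exactly the right structural input, and your third step does close all cases when $s \ge 0$.

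The genuine gap is in your third step when $s<0$, a range the lemma explicitly allows (part (i) under \eqref{regBO1}\,(ii.b): $\rho>1$, $-\tfrac{\rho}{2}\le s<0$, and also $\rho=1$, $-\tfrac12<s<0$; part (ii) under condition (ii.b)). Your Cauchy--Schwarz always places the supremum on the output frequency $n$, and the resulting quantity $\sup_n \jb{n}^{2s_0+2}K(n)$ is not merely delicate there --- it is infinite. Indeed, consider the opposite-sign, high-high configuration $|n_1|\sim|n_2|\gg|n|$ with $n_2$ of sign opposite to $n$: then $\jb{\Xi_\BO}\sim\jb{n}\jb{n_2}$ and $\jb{n_1}\sim\jb{n_2}$, so the summand of $\jb{n}^{2s_0+2}K(n)$ is comparable to $\jb{n}^{2s_0+2-2\rho}\jb{n_2}^{-2\rho-4s}$, whose sum over $n_2$ diverges whenever $2\rho+4s\le 1$. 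For part (i) with $\rho=2$ and $s_0=s=-1$ (admissible, since $s\ge-\tfrac{\rho}{2}$) the exponent is $0$ and $K(n)\equiv\infty$; already $\rho=1$, $s\in(-\tfrac12,-\tfrac14]$ fails. No bookkeeping of which frequency is $n_{\max}$ can repair this: with the sup on $n$, both large frequencies must be summed, and for $s<0$ the weights $\jb{n_1}^{-2s}\jb{n_2}^{-2s}$ overwhelm $\jb{\Xi_\BO}^{-2\rho}\sim(\jb{n}\jb{n_2})^{-2\rho}$, which gains nothing in $n_1$. (Note also that your appeal to Lemma \ref{LEM:SUM} is not even admissible here, since it requires both exponents nonnegative.) The missing idea is the one the paper itself points to in Remark \ref{REM:non1}\,(ii): in the duality/Cauchy--Schwarz step one may place the supremum on \emph{any} of the three frequencies and should take the minimum over these choices, splitting the frequency sum into regions and dualizing differently in each. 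In the problematic configuration one puts the sup on $n_2$ (or $n_1$) and sums over $n$, obtaining, up to logarithms, $\jb{n_2}^{-2\rho-4s}+\jb{n_2}^{2s_0+3-4\rho-4s}$, which is bounded precisely because $s\ge-\tfrac{\rho}{2}$ and $s_0\le 2\rho+2s-\tfrac32$ throughout the stated parameter ranges. This region-dependent choice of dualization is how the cited proof \cite[Proposition 5.1]{CGLLO1} handles negative $s$, and without it your argument proves the lemma only for $s\ge0$.
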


Once we prove the following lemma
on the trilinear operator $\NN_{t, r}^\BO$
defined in \eqref{BX5},
Theorem~\ref{THM:UU2} 
for the modulated BO \eqref{BO} follows
from proceeding as in 
the proof of Theorem~\ref{THM:UU1}
for the modulated KdV \eqref{kdv1}
presented in Section \ref{SEC:KDV}.
We omit details of the proof of 
Theorem~\ref{THM:UU2}.

\begin{lemma}\label{LEM:nonlin4}
Given $\rho \ge1$,  $\frac12< \g < 1$, and $T> 0$, 
let  $w$ be $(\rho,\g)$-irregular on $[0, T]$ in the sense of Definition~\ref{DEF:ir}.
Given $0 \le r < t \le T$, let $\NN^\BO_{t, r}$
be as in \eqref{BX5}.
Suppose that $s\ge 0$ and $s_0 \in \R$ satisfy
\begin{align}
s + \rho \ge  1, \qquad 2s + \rho > \frac 32
\label{Bnox1a}
\end{align}

\noi
and 
\begin{align}
s_0 < s + \rho - \frac 52.
\label{Bnox1b}
\end{align}

\noi
Then, 
$\NN_{t, r}^\BO$  belongs to $\L_3(H^s(\T); H^{s_0}(\T))$, 
satisfying
 the following bound\textup{:}
\begin{align}
 \|\NN^\BO_{t, r}\|_{\L_3(H^s; H^{s_0})}
\les 
(t- r)^\g  \|\Phi^w\|_{  \W^{\rho,\g}_T}
\label{Bnox1c}
\end{align}

\noi
for any $0 \le r <  t \le T$, 
where the implicit constant is independent of $T > 0$.

\end{lemma}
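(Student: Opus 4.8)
The plan is to follow the proof of the corresponding KdV estimate (Lemma \ref{LEM:nonlin2}), reducing \eqref{Bnox1c} to a frequency-space multiplier bound, while accounting for the fact that the Benjamin--Ono resonance function is far weaker than the KdV one. First I would apply the irregularity bound \eqref{rho1} to $\Phi^w_{t,r}(\Xi_\BO(n,n_{12},n_3))$ together with the elementary lower bound $|\Xi_\BO(n,n_{12},n_3)| \ges n_{\max}$, where $n_{\max} := \max(|n|,|n_{12}|,|n_3|)$ (valid since $n\in\Z^*$, $n_3\in\Z^*$, and the explicit factor $n_{12}$ restricts the sum to $n_{12}\in\Z^*$). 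This reduces \eqref{Bnox1c} to showing $M^\BO(f_1,f_2,f_3) \les \prod_{j=1}^3 \|f_j\|_{H^s}$, where $M^\BO$ is the $\l^2_n$-norm of
\[
\sum_{\substack{n_1,n_2,n_3\in\Z^*\\ n = n_{123}}} \frac{\jb{n}^{1+s_0}\jb{n_{12}}}{\jb{n_{\max}}^{\rho}\,\jb{n_1}^s\jb{n_2}^s\jb{n_3}^s}\prod_{j=1}^3 \jb{n_j}^s|\ft f_j(n_j)|.
\]
The decisive difference from KdV is that only a \emph{single} power $\jb{n_{\max}}^{-\rho}$ of dispersive smoothing is available, whereas \eqref{K3a} supplies the product $\jb{n}^{-\rho}\jb{n_{12}}^{-\rho}\jb{n_3}^{-\rho}$ in \eqref{nox3}.

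The observation that compensates for this loss is that, since $n = n_{12}+n_3$, one has $n_{\max}\sim\max(|n_{12}|,|n_3|)$ and each of $|n|,|n_{12}|,|n_3|$ is bounded by $n_{\max}$, so that $\jb{n_{\max}}^\rho \ges \jb{n}^{\rho_0}\jb{n_{12}}^{\rho_1}\jb{n_3}^{\rho_2}$ for \emph{any} $\rho_0+\rho_1+\rho_2 = \rho$ with $\rho_j \ge 0$. This lets me allocate the single budget $\rho$ among the three frequencies. As in KdV, I would then use $\jb{n_{12}}^s\les\jb{n_1}^s\jb{n_2}^s$ (valid for $s\ge 0$) to transfer the $H^s$-weights onto $f_1,f_2$, and apply Cauchy--Schwarz in the convolution variable $n_1$ (at fixed $n,n_3$) to extract $\|f_1\|_{H^s}\|f_2\|_{H^s}$; this handles the high--high interaction $|n_1|\sim|n_2|\gg|n_{12}|$, for which the resonance gives no gain whatsoever. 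What remains is a bilinear operator in $(n,n_3)$ acting on $f_3$.

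To bound this remaining operator I would split into the regimes $|n_3|\les|n_{12}|$ (so $\jb{n_{\max}}\sim\jb{n_{12}}\ges\jb{n}$) and $|n_{12}|\les|n_3|$ (so $\jb{n_{\max}}\sim\jb{n_3}$), choose the allocation $(\rho_0,\rho_1,\rho_2)$ adapted to each regime, and sum with Lemma \ref{LEM:SUM}. Tracking the constraints, admissibility of the allocation ($\rho_1\ge 1-s$) yields $s+\rho\ge 1$, summability of the $n_3$-sum yields $2s+\rho>\tfrac32$ (i.e.\ \eqref{Bnox1a}), and the final summation in $n$ yields the smoothing threshold $s_0<s+\rho-\tfrac52$ in \eqref{Bnox1b}. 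The factor $(t-r)^\g\|\Phi^w\|_{\W^{\rho,\g}_T}$ is carried throughout from \eqref{rho1}, giving \eqref{Bnox1c}.

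The hard part will be the endpoint behaviour forced by the single power of resonance. Unlike the KdV estimate, where the naive Hilbert--Schmidt bound (the analogue of $J^\KDV$) closes the argument, here that bound --- and even Schur's test --- diverges logarithmically over part of the stated range (e.g.\ when $\tfrac32<2s+\rho\le 2$, as at $s=0$, $\rho=2$, $s_0=-1$), precisely in the near-resonant regime $|n_3|\ll|n_{12}|$ where $|\Xi_\BO|\sim n_{\max}$ is genuinely attained. Closing the full range \eqref{Bnox1a}--\eqref{Bnox1b} therefore requires a strictly sharper summation than in KdV: I expect to need a dyadic decomposition of the frequencies together with orthogonality in the input/output scales (or, equivalently, a discrete Hardy-type inequality for the $n_3$-summation), in which the \emph{strict} inequalities in \eqref{Bnox1a} and \eqref{Bnox1b} supply the small power gain needed to sum over dyadic scales. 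Executing this refined summation while keeping the allocation $(\rho_0,\rho_1,\rho_2)$ consistent across all frequency regimes is where the main work lies, and it is also the structural reason why Benjamin--Ono requires $\rho>\tfrac52$ against $\rho>\tfrac54$ for KdV.
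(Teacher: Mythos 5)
There is a genuine gap, and it occurs at the very first step. You replace $\jb{\Xi_\BO(n,n_{12},n_3)}^{-\rho}$ by $\jb{n_{\max}}^{-\rho}$ with $n_{\max}=\max(|n|,|n_{12}|,|n_3|)$ and then try to make up the deficit by cleverer summation. But the paper's proof does not use the crude lower bound $|\Xi_\BO|\ges n_{\max}$ at all: it exploits the exact identity obtained from a sign-case analysis of $\Xi_\BO(n,n_{12},n_3)=-|n|n+|n_{12}|n_{12}+|n_3|n_3$ under $n=n_{12}+n_3$, namely that $|\Xi_\BO|$ always equals $2$ times the \emph{product} of two of the three frequencies ($2|n_{12}n_3|$ when $n_{12}n_3>0$, and $2|nn_3|$ or $2|nn_{12}|$ in the mixed-sign cases). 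Since $\jb{ab}\sim\jb{a}\jb{b}$ for nonzero integers, this supplies two full factors of $\jb{\cdot}^{-\rho}$ distributed over two frequencies, after which the same Hilbert--Schmidt/Lemma~\ref{LEM:SUM} computation as in the KdV case closes every regime directly --- no dyadic decomposition, orthogonality, or Hardy-type refinement is needed. The conditions \eqref{Bnox1a}--\eqref{Bnox1b} come out of reading off Lemma~\ref{LEM:SUM} in the resulting cases (with the extra condition $2s+\rho>\tfrac32$ arising from the subcase $|n_{12}|\sim|n_3|\ges|n|$), not from any borderline dyadic summation.

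The reason your proposed rescue cannot work is that the single-power majorant is not merely logarithmically divergent on the stated range --- the trilinear estimate with $\jb{n_{\max}}^{-\rho}$ in place of $\jb{\Xi_\BO}^{-\rho}$ is \emph{false} by a positive power of the frequency. Test it at $s=0$ with $\ft f_1=N^{-1/2}\ind_{[N,2N]}$, $\ft f_2=N^{-1/2}\ind_{[1,N]}$, $\ft f_3=M^{-1/2}\ind_{[1,M]}$, $1\ll M\ll N$: here $|n|\sim|n_{12}|\sim N$, $n_{\max}\sim N$, and the majorized operator has $\l^2_n$-norm $\sim N^{s_0+\frac52-\rho}M^{\frac12}$, which forces $s_0\le\rho-3$ rather than $s_0<\rho-\frac52$; with the true resonance $|\Xi_\BO|=2n_{12}n_3\sim Nn_3$ the extra $\jb{n_3}^{-\rho}$ makes the $n_3$-sum converge and the same data gives $\sim N^{s_0+\frac52-\rho}M^{-\frac12}$. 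So the half-power you are missing is lost irretrievably the moment you discard the factor $\jb{n_3}^{-\rho}$ (resp.\ $\jb{n_{12}}^{-\rho}$ or $\jb{n}^{-\rho}$), and no downstream summation scheme applied to the majorant can recover it. Your closing remark about why BO needs $\rho>\tfrac52$ versus $\rho>\tfrac54$ for KdV also points at the wrong mechanism: the difference is that $\Xi_\KDV=-3nn_1n_2$ is a product of all three frequencies while $\Xi_\BO$ is a product of only two, which is why \eqref{nox1b} carries $2\rho$ and \eqref{Bnox1b} carries $\rho$.
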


Here, the condition $\rho \ge 1$ may be relaxed, 
but, in view of Lemma \ref{LEM:nonlin3}, 
 we keep it as it is since we need both Lemmas \ref{LEM:nonlin3}
 and \ref{LEM:nonlin4} to prove Theorem \ref{THM:UU2}.
A similar comment applies to Lemma \ref{LEM:nonlin6}.

Just as in the modulated KdV case, 
the bound \eqref{Bnox1c}
does not hold for $s < 0$; see
Remark~\ref{REM:non1}.
We conclude this section by  presenting a proof of  Lemma \ref{LEM:nonlin4}.

\begin{proof}[Proof of Lemma \ref{LEM:nonlin4}]

From \eqref{BX5} with \eqref{rho1} and \eqref{BX4}, we have
\begin{align*}
 \| \NN^\BO_{t, r}(f_1, f_2,  f_3)\|_{H^{s_0}}
&  \le
(t-r)^\g \|\Phi^w\|_{  \W^{\rho,\g}_T}M^\BO(f_1, f_2, f_3)
\end{align*}

\noi
for any $0 \le r < t \le T$, 
where $M^\BO(f_1, f_2, f_3)$ is given by 
\begin{align*}
& M^\BO(f_1, f_2, f_3)\\
& \ \  =\bigg\|
\sum_{\substack{n_1, n_2, n_3 \in \Z_*\\n = n_{123}}}
\frac{\jb{\Xi_\BO(n, n_{12}, n_3)}^{-\rho}}{\jb{n}^{-s_0 - 1}
\jb{n_{12}}^{-1}
\jb{n_1}^{s}\jb{n_2}^{s}\jb{n_3}^{s}} 
\prod_{j = 1}^3\jb{n_j}^s|\ft f_j(n_j)|
\bigg\|_{\l^2_n}.
\end{align*}

\noi
Then, arguing as in \eqref{nox4}, 
it suffices to show that 
\begin{align*}
 J^\BO
= \bigg(\sum_{n,  n_3 \in \Z_*}
\frac{\ind_{n = n_{123}}
\ind_{n_{12} \ne0}
\cdot \jb{\Xi_\BO(n, n_{12}, n_3)}^{-2\rho}}
{\jb{n}^{-2s_0 - 2}
\jb{n - n_3}^{-2+2s}
\jb{n_3}^{2s}} 
\bigg)^\frac 12 < \infty.
\end{align*}

From \eqref{BX4}, we have, under $n = n_{12} + n_3$,  

\smallskip

\begin{itemize}
\item[(a)]
If $n_{12}n_3>0$ (i.e.~$n$, $n_{12}$, and $n_3$ all have the same sign), 
then  $|\Xi_\BO(n, n_{12}, n_3)|=2|n_{12}n_3|$.

\smallskip

\item[(b)]
If $n, n_{12}>0>n_3$,  then  $|\Xi_\BO(n, n_{12}, n_3)|=2|n n_3|$.

\smallskip

\item[(c)]

If $n, n_3>0>n_{12}$,  then  $|\Xi_\BO(n, n_{12}, n_3)|=2|n n_{12}|$.

\smallskip

\item[(d)]
If $n_{12}>0>n, n_3$,  then  $|\Xi_\BO(n, n_{12}, n_3)|=2|n n_{12}|$.

\smallskip

\item[(e)]
If $n_3>0>n, n_{12}$,  then  $|\Xi_\BO(n, n_{12}, n_3)|=2|n n_3|$.

\end{itemize}

\smallskip

\noi
By symmetry, 
we only consider Cases (a), (b),  and (c) in the following.

\medskip

\noi
$\bul$ {\bf Case (a):} $n_{12} n_3 >0$.\\
\indent
In this case, we have
\begin{align*}
( J^\BO)^2
\les  \sum_{n,  n_3 \in \Z}
\frac{1}
{\jb{n}^{-2s_0 - 2}
\jb{n - n_3}^{2\rho-2+2s}
\jb{n_3}^{2\rho+2s}} .
\end{align*}

\noi
By
Lemma \ref{LEM:SUM}
in summing first in $n_3$ and then in $n$, we have $J^\BO < \infty$, 
provided that 
$s + \rho  \ge 1$ and $s_0 < s + \rho - \frac 52$.

\medskip

\noi
$\bul$ {\bf Case (b):} $n, n_{12}>0>n_3$.\\
\indent
In this case, we have $|n_{12}| > |n_3|$ and thus $|n_{12}| \ges |n|$,
since $n = n_{12} + n_3$.
Then,  we have
\begin{align*}
 (J^\BO)^2
& \les  
\sum_{n,  n_3 \in \Z}
\frac{\ind_{n = n_{123}}\ind_{|n_{12}|\sim |n_3| \ges |n|}}
{\jb{n}^{2\rho-2s_0 - 2}
\jb{n_3}^{2\rho+4s - 2}} \\
& \quad + \sum_{n,  n_3 \in \Z}
\frac{\ind_{n = n_{123}}\ind_{|n_{12}|\sim |n| \ges |n_3|}}
{\jb{n}^{2\rho-2s_0 +2s- 4}
\jb{n_3}^{2\rho+2s}} 
< \infty, 
\end{align*}

\noi
provided that 
$2s + \rho  > \frac 32$ and $s_0 < s + \rho - \frac 52$.

\medskip

\noi
$\bul$ {\bf Case (c):} $n, n_3>0>n_{12}$.\\
\indent
In this case, we have $|n_{3}| > |n_{12}|$ and thus $|n_{3}| \ges |n|$,
since $n = n_{12} + n_3$.
Then, by Lemma~\ref{LEM:SUM},  we have
\begin{align*}
( J^\BO)^2
& \les  \sum_{n,  n_3 \in \Z}
\frac{\ind_{n = n_{123}}\ind_{|n_{12}|\sim |n_3| \ges |n|}}
{\jb{n}^{2\rho-2s_0 - 2}
\jb{n_3}^{2\rho + 4s-2}} \\
& 
\quad +   \sum_{n,  n_3 \in \Z}
\frac{\ind_{n = n_{123}}\ind_{|n|\sim |n_3| \ges |n_{12}|}}
{\jb{n}^{2\rho-2s_0 +2s - 2}
\jb{n - n_3}^{2\rho-2+2s}} < \infty, 
\end{align*}

\noi
provided that 
$2s + \rho  > \frac 32$ and $s_0 < \min\big(2s + 2\rho -3, s + \rho - \frac 32\big)$.

\medskip

Therefore, 
putting all the cases together, 
we conclude that  the desired bound \eqref{Bnox1c} holds, 
provided that \eqref{Bnox1a}
and \eqref{Bnox1b}
hold.
\end{proof}

\section{Modulated dNLS}
\label{SEC:NLS}

In this section, we briefly discuss a proof of Theorem \ref{THM:UU3}
for the modulated dNLS \eqref{dNLS1} on $\T$.
The bilinear driver associated with 
the modulated dNLS~\eqref{dNLS1} on $\T$ is given by 
\begin{align}
X^\dNLS_{t,r}( f _1, f _2)
=-i \int_r^t  \uw  (t')^{-1}
\dx \big( (\uw (t')   f _1)( \uw (t')  f_2)\big) dt',
\label{SX1}
\end{align}

\noi
where 
\begin{align*}
\uw (t)=e^{i w(t) \dx^2 }
\end{align*}

\noi
denotes 
the modulated linear propagator
for \eqref{dNLS1}.
By taking the Fourier transform, we have 
\begin{align*}
\F\big( X^\dNLS_{t,r}  ( f _1, f _2) \big)(n)  =
n  \sum_{ \sub {n_1, n_2 \in \Z_*\\ n = n_1+n_2}}
 \Phi_{t,r}^{w}(\Xi_\dNLS (\bar n))\ft  f_1 ( n_1 )  \ft   f_2 (n_2), 
\end{align*}

\noi
where $\Phi_{t,r}^w$ is as in  \eqref{rho2}
and 
$\Xi_\dNLS (\bar n)$ denotes the resonance function for dNLS given by 
\begin{align}
\begin{split}
\Xi_\dNLS (\bar n)
& = \Xi_\dNLS(n,n_1,n_2) = n^2 - n_1^2 - n_2^2\\
& =  2 n_1 n_2, 
\end{split}
\label{SX4a}
\end{align}

\noi
where the last equality holds under $n = n_1+ n_2$.
Given $0 \le r < t \le T$, 
define the trilinear operator $\NN^\dNLS_{t, r}(f_1, f_2, f_3)$, acting on functions on $\T$, 
 by 
\begin{align}
\begin{split}
& \F_x\big(\NN^\dNLS_{t, r}(f_1, f_2, f_3)\big)(n)\\
& \quad =   
- 2 i  n   \sum_{\substack{n_1, n_2, n_3 \in \Z_*\\n = n_{123}}}
 \Phi^w_{t, r}(\Xi_\dNLS (n, n_{12}, n_3))
e^{i  \Xi_\dNLS (n_{12}, n_1, n_2)w(r)}n_{12} 
\prod_{j = 1}^3\ft f_j(n_j), 
\end{split} 
\label{SX5}
\end{align}

\noi
where we used the convention \eqref{no1}.
Let $\uu(t) = \uw(t)^{-1} u(t)$ denote the 
modulated interaction representation 
of a solution $u$ to (the Duhamel formulation of)~\eqref{dNLS1}.
Then, 
proceeding as in Section \ref{SEC:KDV}, 
we have the following normal form equation for 
the 
modulated dNLS \eqref{dNLS1}:
\begin{align*}
\ft \uu(t, n) - \ft \uu(0, n) 
 = \F_x\big(X^\dNLS_{t, 0}(\uu(0))\big)(n)
 +  \int_0^t 
\F_x\big(\NN^\dNLS_{t, t'}(\uu(t'))\big)(n)dt', 
\end{align*}

\noi
where we used the short-hand notation \eqref{short1}.

The next lemma on the boundedness 
and nonlinear smoothing property 
of the bilinear operator $X^\dNLS_{t, r}$
follows from Cases (i.a) and (iii.a)
in the proof of 
\cite[Proposition 5.1]{CGLLO1}
for the bilinear driver $X^\BO_{t, r}$ 
for the modulated BO defined in~\eqref{BX1}.

\begin{lemma}\label{LEM:nonlin5}
Given $\rho \ge 1$,  $\frac12< \g < 1$, and $T> 0$, 
let  $w$ be $(\rho,\g)$-irregular on $[0, T]$ in the sense of Definition~\ref{DEF:ir}.
Given $0 \le r < t \le T$, let $X^\dNLS_{t, r}$
be as in \eqref{SX1}.
Suppose that $s \in \R$ satisfies 
\begin{align}
 s+ \rho \ge 0 \qquad\text{and}\qquad 
s >  \frac 32 - 2\rho.
\label{regS1}
\end{align}

\noi
Then, 
we have $X_{t, r}^\dNLS \in \L_2(H^s(\T); H^{s}(\T))$
with the following bound\textup{:}
\begin{align*}
 \|X^\dNLS_{t, r}\|_{\L_2(H^s; H^{s})}
\les 
(t- r)^\g  \|\Phi^w\|_{  \W^{\rho,\g}_T}
\end{align*}

\noi
for any $0 \le r < t \le T$, 
where the implicit constant is independent of $T > 0$.
In addition to~\eqref{regS1}, suppose that 
$\rho  > 1$ and $s_0 > s$ satisfy
\begin{align*}
s+ \rho \ge 0, \qquad s_0 \le s + \rho - 1, \qquad \text{and}\qquad 
s_0 < 2s + 2\rho - \frac 32.
\end{align*}

\noi
Then, 
we have 
$X^\dNLS_{t, r} \in \L_2(H^s(\T); H^{s_0}(\T))$
with the following bound\textup{:}
\begin{align*}
 \|X^\dNLS_{t, r}\|_{\L_2(H^s; H^{s_0})}
\les 
(t- r)^\g  \|\Phi^w\|_{  \W^{\rho,\g}_T}
\end{align*}

\noi
for any $0 \le r< t \le T$.

\end{lemma}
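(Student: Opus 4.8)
The plan is to follow the template of Lemmas~\ref{LEM:nonlin2} and~\ref{LEM:nonlin4}: bound $X^\dNLS_{t,r}$ on the Fourier side by the irregularity estimate~\eqref{rho1}, and reduce the operator bound to the finiteness of an explicit discrete sum controlled by Lemma~\ref{LEM:SUM}. The crucial structural point is that, by~\eqref{SX4a}, the dNLS resonance function satisfies $\Xi_\dNLS(\bar n)=2n_1n_2$ \emph{identically in the signs} of $n,n_1,n_2$. Since $n_1,n_2\in\Z^*$ (so $|n_j|\ge1$), this gives $\jb{\Xi_\dNLS(\bar n)}\ges\jb{n_1}\jb{n_2}$, hence $\jb{\Xi_\dNLS(\bar n)}^{-\rho}\les\jb{n_1}^{-\rho}\jb{n_2}^{-\rho}$ for $\rho\ge0$. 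This is exactly the shape of the resonance factor appearing in the all-same-sign subcase of BO (where $|\Xi_\BO|=2|n_1n_2|$), which is why the estimate reduces to Cases~(i.a) and~(iii.a) of \cite[Proposition~5.1]{CGLLO1}; note that in the opposite-sign frequency configurations the dNLS resonance is \emph{at least as large}, so no additional (worse) cases arise.

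First I would insert the bound $|\Phi^w_{t,r}(\Xi_\dNLS(\bar n))|\les(t-r)^\g\|\Phi^w\|_{\W^{\rho,\g}_T}\jb{n_1}^{-\rho}\jb{n_2}^{-\rho}$ into the Fourier representation of $X^\dNLS_{t,r}$ (from~\eqref{SX1}), extracting the prefactor $(t-r)^\g\|\Phi^w\|_{\W^{\rho,\g}_T}$. Writing $g_j(n_j)=\jb{n_j}^s|\ft f_j(n_j)|$, so that $\|g_j\|_{\ell^2}=\|f_j\|_{H^s}$, the two claimed bounds reduce to showing that the bilinear form on $\ell^2\times\ell^2$ with multiplier
\begin{align*}
m(n,n_1,n_2)=\jb{n}^{s_0}|n|\,\jb{n_1}^{-(\rho+s)}\jb{n_2}^{-(\rho+s)}, \qquad n=n_1+n_2,
\end{align*}
maps into $\ell^2_n$, with operator norm controlled by the stated exponents.

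To bound this bilinear form I would use the Cauchy--Schwarz argument of Remark~\ref{REM:non1}\,(ii), but split the frequencies into two regions. In the region where the output is largest, say $|n|\sim|n_1|\ges|n_2|$, I apply Cauchy--Schwarz in $n_1$ for fixed $n$; using $\jb{n}\sim\jb{n_1}$ and summing $\jb{n_2}^{-2(\rho+s)}$ over the constrained range $|n_2|\les|n|$ yields a bound $\jb{n}^{2s_0+3-4(\rho+s)}$ when $\rho+s\le\frac12$ and $\jb{n}^{2s_0+2-2(\rho+s)}$ when $\rho+s>\frac12$, finite under the thresholds $s_0\le 2s+2\rho-\frac32$ and $s_0\le s+\rho-1$, respectively. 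In the region where two inputs are large and nearly cancel, $|n_1|\sim|n_2|\gg|n|$, the naive Cauchy--Schwarz in $n_1$ diverges for $\rho+s\le\frac14$; instead I Cauchy--Schwarz in the output variable $n$ for fixed $n_1$, so that the harmless factor $\jb{n_1}^{-4(\rho+s)}$ (recall $\rho+s\ge0$) is paired against $\sum_{|n|\ll|n_1|}\jb{n}^{2s_0+2}$, again reproducing $s_0\le 2s+2\rho-\frac32$. Specializing to $s_0=s$ recovers~\eqref{regS1} (the condition $\rho\ge1$ from the regime $\rho+s>\frac12$ and $s>\frac32-2\rho$ from $\rho+s\le\frac12$), while keeping $s_0>s$ general gives the smoothing conditions $s_0\le s+\rho-1$ and $s_0<2s+2\rho-\frac32$.

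The main obstacle I anticipate is the two-large-inputs region $|n_1|\sim|n_2|\gg|n|$ together with low combined regularity $\rho+s\le\frac14$: there a single application of Cauchy--Schwarz in the convolution variable fails, and one must instead Cauchy--Schwarz against the output frequency, exploiting that $s_0$ is then necessarily very negative (so $\sum_n\jb{n}^{2s_0+2}$ is summable over the relevant range). The remaining work is purely bookkeeping---matching the two Lemma~\ref{LEM:SUM} regimes to the stated thresholds and handling the endpoint $\eps$-losses (e.g.\ at $\rho+s=\frac12$) that turn some non-strict inequalities into the strict ones appearing in~\eqref{regS1} and in the smoothing hypotheses.
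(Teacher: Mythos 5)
Your proposal is correct and matches the paper's intended argument: the paper does not write out a proof but simply observes that, since $\Xi_\dNLS(\bar n)=2n_1n_2$ identically, the estimate reduces to Cases (i.a) and (iii.a) of \cite[Proposition 5.1]{CGLLO1} for $X^\BO_{t,r}$ (the configurations where $|\Xi_\BO|=2|n_1n_2|$), which is exactly the structural observation you make before carrying out the standard Cauchy--Schwarz\,/\,Lemma~\ref{LEM:SUM} computation with the region-dependent choice of summation variable. Your explicit treatment of the regimes $\rho+s\le\frac12$ versus $\rho+s>\frac12$ reproduces the stated thresholds, so the argument is complete.
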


Once we prove the following lemma
on the trilinear operator $\NN_{t, r}^\dNLS$
defined in \eqref{SX5}, 
Theorem~\ref{THM:UU3} 
for the modulated dNLS \eqref{dNLS1} follows
from proceeding as in 
the proof of Theorem~\ref{THM:UU1}
for the modulated KdV~\eqref{kdv1}
presented in Section \ref{SEC:KDV}.
We omit details of the proof of 
Theorem~\ref{THM:UU3}.

\begin{lemma}\label{LEM:nonlin6}
Given $\rho \ge 1$,  $\frac12< \g < 1$, and $T> 0$, 
let  $w$ be $(\rho,\g)$-irregular on $[0, T]$ in the sense of Definition~\ref{DEF:ir}.
Given $0 \le r < t \le T$, let $\NN^\dNLS_{t, r}$
be as in \eqref{SX5}.
Suppose that $s\ge 0$ and $s_0 \in \R$ satisfy
\begin{align}
s + \rho \ge  1 
\label{Snox1a}
\end{align}

\noi
and 
\begin{align}
s_0 < s + \rho - \frac 52.
\label{Snox1b}
\end{align}

\noi
Then, 
$\NN_{t, r}^\dNLS$  belongs to $\L_3(H^s(\T); H^{s_0}(\T))$, 
satisfying
 the following bound\textup{:}
\begin{align}
 \|\NN^\dNLS_{t, r}\|_{\L_3(H^s; H^{s_0})}
\les 
(t- r)^\g  \|\Phi^w\|_{  \W^{\rho,\g}_T}
\label{Snox1c}
\end{align}

\noi
for any $0 \le r < t \le T$, 
where the implicit constant is independent of $T > 0$.

\end{lemma}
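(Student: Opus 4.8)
The plan is to mirror the proof of Lemma~\ref{LEM:nonlin2} for the modulated KdV, since the only structural input that changes is the resonance function. First, I would start from the definition \eqref{SX5} of $\NN^\dNLS_{t,r}$ and apply the irregularity bound \eqref{rho1}, which gives $|\Phi^w_{t,r}(a)|\le (t-r)^\g\|\Phi^w\|_{\W^{\rho,\g}_T}\jb{a}^{-\rho}$, together with $|e^{i\Xi_\dNLS(n_{12},n_1,n_2)w(r)}|=1$. Evaluating the first resonance function at $(n,n_{12},n_3)$ and using \eqref{SX4a} under $n=n_{12}+n_3$ yields $\Xi_\dNLS(n,n_{12},n_3)=2n_{12}n_3$, so that $\jb{\Xi_\dNLS(n,n_{12},n_3)}^{-\rho}\les\jb{n_{12}}^{-\rho}\jb{n_3}^{-\rho}$ (using $n_{12},n_3\in\Z^*$). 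Absorbing $|n|\les\jb{n}$ and $|n_{12}|\les\jb{n_{12}}$, this reduces the claim to the multiplier bound
\[
\|\NN^\dNLS_{t,r}(f_1,f_2,f_3)\|_{H^{s_0}}\le (t-r)^\g\|\Phi^w\|_{\W^{\rho,\g}_T}\,M^\dNLS(f_1,f_2,f_3),
\]
where $M^\dNLS$ is the analogue of \eqref{nox3} with the single change that $\jb{n}^{\rho-s_0-1}$ in the denominator is replaced by $\jb{n}^{-s_0-1}$, precisely because $\Xi_\dNLS$ carries no factor of $n$ (in contrast to $\Xi_\KDV=-3nn_1n_2$).

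Next, I would run the same reduction as in \eqref{nox4}: apply the triangle inequality $\jb{n_{12}}^s\les\jb{n_1}^s\jb{n_2}^s$ (valid for $s\ge0$) to trade $\jb{n_{12}}^{-(\rho-1)}$ for $\jb{n_1}^s\jb{n_2}^s\jb{n_{12}}^{-(\rho-1+s)}$, perform the Cauchy--Schwarz sum in $n_1$ (with $n_2=n_{12}-n_1$), which contributes $\|f_1\|_{H^s}\|f_2\|_{H^s}$ uniformly in $n_{12}$, and then a Cauchy--Schwarz sum in $n_3$ contributing $\|f_3\|_{H^s}$. This bounds $M^\dNLS\le J^\dNLS\prod_{j=1}^3\|f_j\|_{H^s}$ with
\[
J^\dNLS=\bigg(\sum_{n,n_3\in\Z}\frac{1}{\jb{n}^{-2s_0-2}\jb{n-n_3}^{2\rho-2+2s}\jb{n_3}^{2\rho+2s}}\bigg)^{\frac12}.
\]

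Finally, I would apply Lemma~\ref{LEM:SUM} exactly as in the KdV case, summing first in $n_3$ and then in $n$. The $n_3$-sum requires $s+\rho\ge1$, which is \eqref{Snox1a}, and produces decay $\jb{n}^{-(2\rho-2+2s)}$; the resulting $n$-sum $\sum_n\jb{n}^{2s_0+4-2\rho-2s}$ converges precisely when $s_0<s+\rho-\frac52$, which is \eqref{Snox1b}. This yields \eqref{Snox1c}.

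I do not expect a genuine obstacle here: the quadratic resonance $\Xi_\dNLS(n,n_{12},n_3)=2n_{12}n_3$ is a clean product with no absolute values, so, unlike the modulated BO case in Lemma~\ref{LEM:nonlin4} where the nonlocal symbol $\Xi_\BO$ forced the five-case sign analysis (a)--(e), no case distinction is needed and the argument is line-by-line the KdV computation. The one point to get right is the bookkeeping of the $\jb{n}$-power: because $\Xi_\dNLS$ lacks the factor $n$ present in $\Xi_\KDV$, the effective gain in the output frequency is $\rho$ smaller than for KdV, which is exactly why the smoothing threshold degrades from $s_0<s+2\rho-\frac52$ to $s_0<s+\rho-\frac52$. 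As in Remark~\ref{REM:non1}, the restriction $s\ge0$ is sharp, since the step $\jb{n_{12}}^s\les\jb{n_1}^s\jb{n_2}^s$, equivalently the high-high interaction $|n_1|\sim|n_2|\gg|n|,|n_3|$, breaks down for $s<0$.
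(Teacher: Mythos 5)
Your proposal is correct and follows essentially the same route as the paper: reduce to the multiplier bound via \eqref{rho1}, use $\Xi_\dNLS(n,n_{12},n_3)=2n_{12}n_3$ to get $\jb{\Xi_\dNLS}^{-\rho}\les\jb{n_{12}}^{-\rho}\jb{n_3}^{-\rho}$, run the Cauchy--Schwarz reduction of \eqref{nox4} to the same quantity $J^\dNLS$, and close with Lemma~\ref{LEM:SUM} summing first in $n_3$ and then in $n$ under \eqref{Snox1a} and \eqref{Snox1b}. Your observation that no sign case analysis is needed (in contrast to the BO case) and your accounting for the missing factor of $n$ relative to $\Xi_\KDV$ both match the paper exactly.
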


Note that 
the bound \eqref{Snox1c}
does not hold for $s < 0$; see
Remark~\ref{REM:non1}.
We conclude this section by  presenting a proof of  Lemma \ref{LEM:nonlin6}.

\begin{proof}[Proof of Lemma \ref{LEM:nonlin6}]

From \eqref{SX5} with \eqref{rho1} and \eqref{BX4}, we have
\begin{align*}
 \| \NN^\dNLS_{t, r}(f_1, f_2, f_3)\|_{H^{s_0}}
&  \le
(t-r)^\g \|\Phi^w\|_{  \W^{\rho,\g}_T}M^\dNLS(f_1, f_2, f_3)
\end{align*}

\noi
for any $0 \le r < t \le T$, 
where $M^\dNLS(f_1, f_2, f_3)$ is given by 
\begin{align*}
& M^\dNLS(f_1, f_2, f_3)\\
& \ \  =\bigg\|
\sum_{\substack{n_1, n_2, n_3 \in \Z_*\\n = n_{123}}}
\frac{\jb{\Xi_\dNLS(n, n_{12}, n_3)}^{-\rho}}{\jb{n}^{-s_0 - 1}
\jb{n_{12}}^{-1}
\jb{n_1}^{s}\jb{n_2}^{s}\jb{n_3}^{s}} 
\prod_{j = 1}^3\jb{n_j}^s|\ft f_j(n_j)|
\bigg\|_{\l^2_n}.
\end{align*}

\noi
Then, arguing as in \eqref{nox4}, 
it suffices to show that 
\begin{align}
 J^\dNLS
= \bigg(\sum_{n,  n_3 \in \Z_*}
\frac{\ind_{n = n_{123}}
\ind_{n_{12} \ne0}
\cdot \jb{\Xi_\dNLS(n, n_{12}, n_3)}^{-2\rho}}
{\jb{n}^{-2s_0 - 2}
\jb{n - n_3}^{-2+2s}
\jb{n_3}^{2s}} 
\bigg)^\frac 12 < \infty.
\label{Snox5}
\end{align}

From \eqref{SX4a}, we have
\begin{align}
\Xi_\dNLS(n, n_{12}, n_3)
=  2 n_{12} n_3.
\label{SX4b}
\end{align}

\noi
 under $n = n_{12} + n_3$.
Then, from \eqref{Snox5}
and \eqref{SX4b}, we have
\begin{align*}
( J^\dNLS)^2
\les  \sum_{n,  n_3 \in \Z}
\frac{1}
{\jb{n}^{-2s_0 - 2}
\jb{n - n_3}^{2\rho-2+2s}
\jb{n_3}^{2\rho+2s}} .
\end{align*}

\noi
By
Lemma \ref{LEM:SUM}
in summing first in $n_3$ and then in $n$, we have $J^\dNLS < \infty$, 
provided that \eqref{Snox1a}
and \eqref{Snox1b} hold.
This proves 
\eqref{Snox1c}.
\end{proof}

\appendix

\section{Unconditional uniqueness for the  modulated cubic NLS} 
\label{SEC:A}

In this appendix, we study  the 
 following modulated cubic NLS on $\T$:
\begin{align}
i \dt u + \dx^2 u \cdot \dt w= |u|^2u.
\label{xNLS1}
\end{align}

\noi
In \cite{CG1},
Chouk and the first author showed that 
if   $w$ is $(\rho,\g)$-irregular on $\R_+$
for some $\rho > \frac 12 $ (and $\frac 12 < \g < 1$), 
then \eqref{xNLS1} is  globally well-posed
in $L^2(\T)$; see \cite[Theorem~1.8]{CG1}.

Our main goal in this appendix is to 
implement a Poincar\'e-Dulac normal form reduction for \eqref{xNLS1}, 
by splitting the nonlinearity in the non-resonant and resonant parts as in~\eqref{xNLS2} below, 
and prove the following theorem.

\begin{theorem}\label{THM:UU4}
Given $\rho > \frac 23$,  $\frac12< \g < 1$, and $T> 0$, 
let  $w$ be $(\rho,\g)$-irregular on $[0, T]$ in the sense of Definition~\ref{DEF:ir}.
Then, 
given any    $s\ge \frac 16$, 
the modulated cubic NLS equation~\eqref{xNLS1}
on  $\T$
is unconditionally locally well-posed in $H^s(\T)$.
In particular, 
if, in addition, $w$ is  $(\rho,\g)$-irregular  on $\R_+$, 
then the modulated cubic NLS equation \eqref{xNLS1}
on  $\T$
is unconditionally globally well-posed in $H^s(\T)$.

\end{theorem}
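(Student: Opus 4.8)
The plan is to combine the gauge transform \eqref{gauge1} with a \emph{single} Poincar\'e--Dulac normal form reduction, in the spirit of Section \ref{SEC:KDV}, but now keeping the resonant part of the nonlinearity untouched. First I would pass to the renormalized equation \eqref{xNLS2} for the gauged unknown $v$ and then to its modulated interaction representation $\vv(t) = \uw(t)^{-1} v(t)$, with $\uw(t) = e^{i w(t)\dx^2}$. On the Fourier side, the non-resonant part $\NN(v)$ carries the oscillatory factor $e^{i\Xi_\NLS(\bar n)w(t')}$ with $\Xi_\NLS(\bar n)\neq 0$ (thanks to the renormalization, the surviving resonance $n=n_1=n_2=n_3$ has been absorbed into $\RR(v)$), whereas the doubly resonant part $\RR(v)$ carries no oscillation and, in the interaction representation, reduces to $\RR(\vv)$ since its phases cancel. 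I would apply the identity \eqref{NS1} and integrate by parts in time \emph{only} on the non-resonant part. As $\Phi^w_{t,t}=0$, there is no right-endpoint boundary contribution, and the left endpoint produces a trilinear driver $X^\NLS_{t,0}(u_0)$. Substituting the equation for $\dt\vv$ back into the remaining integral then produces, via the product rule and symmetrization, a quintilinear operator $\NN^{(5)}_{t,t'}$ whose kernel carries an \emph{outer} smoothing factor $\Phi^w_{t,t'}(\Xi_\NLS(n,n^*_{123},n_4,n_5))$ and an \emph{inner} unimodular phase $e^{i\Xi_\NLS(n^*_{123},n_1,n_2,n_3)w(t')}$, exactly as in \eqref{nox1}, together with analogous terms in which $\RR$ is substituted (same outer smoothing but an inner resonant constraint, hence better bounds) and the untouched resonant cubic term $\int_0^t\RR(\vv(t'))\,dt'$. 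This yields the normal form equation for \eqref{xNLS1}.

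The solution theory then rests on three multilinear estimates, each of the form $\les (t-r)^\g\|\Phi^w\|_{\W^{\rho,\g}_T}$. The trilinear driver $X^\NLS_{t,0}$ is bounded on $H^s(\T)$ for $\rho>\tfrac23$ and $s\ge\tfrac16$; this is a minor variant of the estimates in \cite{CG1} (cf.\ Lemma \ref{LEM:nonlin5}), using $|\Phi^w_{t,r}(\Xi_\NLS)|\les(t-r)^\g\|\Phi^w\|_{\W^{\rho,\g}_T}\jb{\Xi_\NLS(\bar n)}^{-\rho}$ with $\Xi_\NLS(\bar n)=2(n-n_1)(n-n_3)$. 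The resonant cubic term is harmless: from $\F_x(\RR(v))(n)=-|\ft v(n)|^2\ft v(n)$ and $\jb{n}^s|\ft v(n)|\le\|v\|_{H^s}$ one gets $\|\RR(v)\|_{H^s}\les\|v\|_{H^s}^3$ for $s\ge0$, with the time integral supplying a factor of $t$. The crux is the \emph{quintilinear} estimate
\begin{align*}
\|\NN^{(5)}_{t,r}\|_{\L_5(H^s(\T);H^s(\T))}\les (t-r)^\g\|\Phi^w\|_{\W^{\rho,\g}_T},
\qquad s\ge\tfrac16,\ \rho>\tfrac23,
\end{align*}
which I would reduce, as in the proofs of Lemmas \ref{LEM:nonlin2} and \ref{LEM:nonlin4}, to the finiteness of an explicit frequency sum weighted by $\jb{\Xi_\NLS(n,n^*_{123},n_4,n_5)}^{-\rho}$ after a Cauchy--Schwarz reduction analogous to \eqref{nox4}.

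The hard part is precisely this quintilinear sum, and it is here that both thresholds are forced. I would split according to the size of the outer resonance $\Xi_\NLS(n,n^*_{123},n_4,n_5)=2(n-n^*_{123})(n-n_5)$, with a case analysis governed by which of the two factors is large. In the genuinely non-resonant region the smoothing $\jb{\Xi}^{-\rho}$ is available and closes the sum, the most constrained interaction requiring $\rho>\tfrac23$. In the (nearly) resonant region $n\approx n^*_{123}$ or $n\approx n_5$, the factor $\jb{\Xi}^{-\rho}$ gives no gain; there I would exploit the resulting localization (e.g.\ $n_4\approx n_5$, collapsing a summation into an $\|\cdot\|_{L^2}^2$ factor and reducing matters to a resonant cubic sum). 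This residual sum behaves as in the unmodulated analysis \cite{GKO, OW} and dictates the sharp threshold $s\ge\tfrac16$, consistent with the embedding $H^{1/6}(\T)\hookrightarrow L^3(\T)$, explaining why a single reduction yields no gain in $s$ despite the irregularity of $w$. I expect the bookkeeping of these resonant and near-resonant sub-cases, together with discarding the inner phase $e^{i\Xi_\NLS(n^*_{123},n_1,n_2,n_3)w(r)}$ by absolute values without losing the outer gain, to be the main technical obstacle.

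Once the three estimates are in hand, a standard contraction in $C([0,\tau];H^s(\T))$ gives a unique solution of the normal form equation, with $\tau$ depending only on $\|u_0\|_{H^s}$ and $\|\Phi^w\|_{\W^{\rho,\g}_T}$. As in the proof of Theorem \ref{THM:UU1}, I would then justify the formal steps (Fubini, the product rule, and the integration by parts) for any $v\in C([0,\tau];H^s(\T))$ with $s\ge\tfrac16$, using $|\dt\ft\vv(t,n)|\les\|v(t)\|_{H^s}^3$ and the $C^1$-regularity of $\Phi^w_{t,\cdot}(\Xi_\NLS)$, thereby showing that \emph{any} Duhamel solution of \eqref{xNLS2} in this class satisfies the normal form equation. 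This yields unconditional uniqueness for $v$, which transfers to $u$ through the invertibility of the gauge \eqref{gauge1}. The global statement then follows by combining the global well-posedness of \eqref{xNLS1} in $L^2(\T)$ from \cite{CG1} with the local-in-time unconditional uniqueness, iterated over $[0,T]$, exactly as in Theorem \ref{THM:UU1}\,(i).
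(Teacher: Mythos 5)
Your overall architecture coincides with the paper's proof of Theorem \ref{THM:UU4}: the gauge transform \eqref{gauge1}, a single Poincar\'e--Dulac normal form reduction via \eqref{NS1} applied only to the non-resonant part, the resulting decomposition into the trilinear driver $X^\NLS_{t,0}$, the untouched resonant cubic term $\Rf$, and the quintilinear operators $\NN\NN^\NLS_{t,r}$, $\NN\RR^\NLS_{t,r}$ of \eqref{xX5}--\eqref{xX6}, followed by a contraction argument, justification of the formal steps using the $C^1$-regularity of $\ft\vv(\cdot,n)$ for $\vv\in C([0,\tau];H^{\frac16}(\T))\subset C([0,\tau];L^3(\T))$, and globalization via \cite{CG1}. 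You also correctly single out the quintilinear bound as the crux and correctly attribute the threshold $s\ge\frac16$ to the embedding $H^{\frac16}(\T)\subset L^3(\T)$.

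The gap is in the tool you propose for that crucial bound. A Cauchy--Schwarz reduction ``analogous to \eqref{nox4}'' works there because the leftover factor is a \emph{two}-fold convolution of $\l^2$ sequences, which lands in $\l^\infty$ for free; in the quintilinear operator the inner factor is the \emph{three}-fold convolution $\sup_m\big|\sum_{m = n_{123}^*}\prod_{j=1}^3{\ft f_j}^*(n_j)\big|$, and Cauchy--Schwarz controls this only by placing one factor in $\l^1$, i.e.\ for $s>\frac12$; even discrete Young's inequality with three $\l^{3/2}$ norms misses the endpoint $s=\frac16$. The paper's Lemma \ref{LEM:nonlin9}\,(ii) instead bounds this inner convolution in $\l^\infty_m$ by $\prod_{j}\|f_j\|_{L^3}\les \prod_j \|f_j\|_{H^{1/6}}$ via Hausdorff--Young and Sobolev (see \eqref{non92}) --- the only place where $s\ge\frac16$ enters --- and then closes the remaining double sum by Young's convolution inequality against the weights $\jb{n-n_{123}^*}^{-\rho}\jb{n-n_5}^{-\rho}$, which is where $\rho>\frac23$ is forced (see \eqref{non93}, \eqref{non95}, \eqref{non98}). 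Accordingly, the case analysis is on $|n|$ versus $|n_5|$ (resp.\ $\max(|n_1|,|n_5|)$ for the second term in \eqref{xX5}), i.e.\ on where to absorb the outer weight $\jb{n}^{1/6}$, not on the size of the outer resonance factor $2(n-n_{123}^*)(n-n_5)$; no near-resonant localization of the kind you sketch ($n_4\approx n_5$ collapsing a summation into an $L^2$ factor) is used, and it is unclear how it would close, since in that regime three inner frequencies still have to be summed with no gain from $\jb{\Xi_\NLS}^{-\rho}$. One simplification you could adopt from the paper: the terms with $\RR$ substituted require no separate analysis, since $\NN\RR^\NLS_{t,r}$ is exactly $2X^\NLS_{t,r}(\Rf(f_1,f_2,f_3),f_4,f_5)+X^\NLS_{t,r}(f_1,\Rf(f_2,f_3,f_4),f_5)$, so Lemmas \ref{LEM:nonlin7} and \ref{LEM:nonlin8} apply directly with only $\rho>\frac12$ and $s\ge0$.
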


\begin{remark}\label{REM:sharp2} \rm

In view of  the cubic nonlinearity in \eqref{xNLS1}
and the (essentially scaling invariant) embedding $H^\frac 16 (\T)\subset L^3(\T)$, 
the regularity threshold $s \ge \frac 16$ 
in Theorem \ref{THM:UU4} 
is sharp
(within the $L^2$-based Sobolev scale). 
\end{remark}


\medskip

The remaining part of this appendix is devoted to a proof of Theorem \ref{THM:UU4}.
As pointed out in Remark \ref{REM:NLS}, 
the  modulated cubic NLS \eqref{xNLS1} 
is {\it not} non-resonant, 
contrary to 
 the non-resonant models studied in 
Sections~\ref{SEC:KDV}, 
\ref{SEC:BO}, and \ref{SEC:NLS}.
Our strategy is the following.
We first split the nonlinearity into 
the non-resonant and resonant parts, 
and then apply a normal form reduction
only to the non-resonant part, 
while the  resonant part satisfies a trivial bound;
see Lemma \ref{LEM:nonlin7}.
See also Lemma \ref{LEM:nonlin9}\,(i).
Here, the resonant
part
refers to the nonlinear interaction coming
from the case 
$\Xi_\NLS (\bar n) =0$, namely $n = n_1$ or $n_3$,  
where 
$\Xi_\NLS (\bar n)$ is the resonance function
for the cubic NLS defined in 
\eqref{Xi9}.
See Remark \ref{REM:NLS1} for an alternative approach.

Write \eqref{xNLS1} as 
\begin{align}
\begin{split}
i \dt u + \dx^2 u \cdot \dt w
& = \NN(u) + \RR(u), 
\end{split}
\label{xNLS2}
\end{align}

\noi
where $\NN(u)$ and $\RR(u)$ denote the non-resonant 
 and    resonant parts of the nonlinearity,
respectively, 
defined by 
\begin{align}
\begin{split}
\F_x\big(\NN(u_1,u_2,u_3)\big)(n) 
& = \sum_{\substack{n = n_{123}^*\\ n\neq n_{1},n_{3}}}
\ft u_1 (n_1) \cj{\ft u_2 (n_2)} \ft u_3 (n_3), \\
\F_x\big(\RR(u_1,u_2,u_3)\big) (n) 
& = 
 \sum_{n_1 \in \Z}
\ft u_1 (n_1) \cj{\ft u_2 (n_1)} \ft u_3 (n)\\
& \quad + 
 \sum_{n_3 \in \Z}
\ft u_1 (n) \cj{\ft u_2 (n_3)} \ft u_3 (n_3)\\
& \quad 
- 
\ft u_1 (n) \cj{\ft u_2 (n)} \ft u_3 (n).
\end{split}
\label{non1}
\end{align}

\noi
Here,    we used 
 the short-hand notation \eqref{short1}
and 
the following short-hand notation:
\begin{align}
n_{1\cdots k}^* = n_1 - n_2 + n_3 -  \cdots + n_k
\label{no2}
\end{align}

\noi
for $k \in 2\N + 1$.
In the following, we also view $\RR$ 
as a trilinear operator acting on 
 functions on $\T$.

Let $\uu(t) = \uw(t)^{-1} u(t)$ denote the 
modulated interaction representation 
of a solution $u$ to~\eqref{xNLS2}.
Then, from \eqref{non1}, we have 
\begin{align}
\begin{split}
\dt \ft \uu(t, n)
& = - i \F_x\big(\uw  (t')^{-1}\NN(u)\big)(t, n)
- i \F_x\big(\uw  (t')^{-1}\RR(u)\big)(t, n)
\\
& = 
-i  
\sum_{\substack{n = n_{123}^*\\ n\neq n_{1},n_{3}}}
e^{i  \Xi_\NLS (\bar n)w(t)} 
\prod_{j = 1}^3 {\ft \uu}^*(t, n_j)\\
& \quad  
- 2i \sum_{k \in \Z} 
    |\ft \uu(t, k)|^2\ft \uu(t, n)
+   i     |\ft \uu(t, n)|^2\ft \uu(t, n)
\end{split}
\label{xNLS2a}
\end{align}

\noi
for $n \in \Z$, 
where $\Xi_\NLS (\bar n)$ denotes the resonance function for
\eqref{xNLS2} defined in \eqref{Xi9}
and 	
\begin{align}
\ft \uu^*(t, n_j)
= \begin{cases}
\ft \uu(t, n_j), & \text{when $j$ is odd,}\rule[-3mm]{0pt}{0pt} \\
\cj{\ft \uu(t, n_j)}, & \text{when $j$ is even.}
\end{cases}
\label{gauge2}
\end{align}

\noi
Then,  we have 
\begin{align}
\begin{split}
\ft \uu(t, n) - \ft \uu(0, n) 
& = 
- i \int_0^t \F_x\big(\uw  (t')^{-1}\NN(\uw  (t')\uu(t'))\big) dt'\\
& \quad  - i \int_0^t \F_x\big(\uw  (t')^{-1}\RR(\uw  (t')\uu(t'))\big) dt'\\
&
= 
-i  
\int_0^t
\sum_{\substack{n = n_{123}^*\\ n\neq n_{1},n_{3}}}
e^{i  \Xi_\NLS (\bar n)w(t')} 
\prod_{j = 1}^3 \ft \uu^*(t', n_j)dt'\\
& \quad
- 2i  \int_0^t \sum_{k \in \Z} 
    |\ft \uu(t', k)|^2\ft \uu(t', n)dt'\\
& \quad  +   i   \int_0^t  |\ft \uu(t', n)|^2\ft \uu(t', n) dt'.
\end{split}
\label{xNLS3}
\end{align}

\noi
The first  term on the right-hand side of \eqref{xNLS3} represents 
the non-resonant   contribution, 
while the second and third terms represent the resonant contribution.
In the following, we apply a normal form reduction
to the non-resonant part, 
corresponding to 
a Poincar\'e-Dulac normal form reduction;
see Remark~\ref{REM:NF1}.

The trilinear  driver associated with
the non-resonant part in \eqref{xNLS3}  is given by 
\begin{align}
X^\NLS_{t,r}( f _1, f _2, f_3)
=-i \int_r^t  \uw  (t')^{-1}
\NN \big( \uw (t')   f _1, \uw (t')  f_2, 
 \uw (t')  f_3
 \big) dt',
\label{xX1}
\end{align}

\noi
where 
\begin{align*}
\uw (t)=e^{i w(t) \dx^2 }
\end{align*}

\noi
denotes 
the modulated linear propagator
for \eqref{xNLS2}.
By taking the Fourier transform, 
it follows from \eqref{non1} that  
\begin{align}
\F\big( X^\NLS_{t,r}  ( f _1, f _2, f_3) \big)(n)  =
-i  \sum_{ \sub {n = n_{123}^*\\n \ne n_1, n_3}}
 \Phi_{t,r}^{w}(\Xi_\NLS (\bar n))\ft  f_1 ( n_1 ) \cj{ \ft   f_2 (n_2)}
 \ft  f_3 ( n_3 ), 
\label{xX2}
\end{align}

\noi
where $\Phi_{t,r}^w$ 
and 
$\Xi_\NLS (\bar n)$ 
are as in  \eqref{rho2}
and  \eqref{Xi9}.

Proceeding as in \eqref{NF2} with \eqref{xNLS3} and \eqref{NS1}, we have 
\begin{align}
\begin{split}
- i  & \int_0^t \F_x\big(\uw(t')^{-1}\NN(\uw(t') \uu(t')\big) dt'\\
& =  i  
\sum_{\substack{n = n_{123}^*\\ n\neq n_{1},n_{3}}}
 \int_0^t
 \partial_{t'} \Phi^w_{t, t'}(\Xi_\NLS (\bar n))
 \prod_{j = 1}^3 \ft \uu^*(t', n_j)
 dt'\\
& = \F_x\big(X^\NLS_{t, 0}(\uu(0), \uu(0), \uu(0))\big)(n)  \\
&  \quad -  i \sum_{\substack{n = n_{123}^*\\ n\neq n_{1},n_{3}}}
 \int_0^t
 \Phi^w_{t, t'}(\Xi_\NLS (\bar n))
\, \dt \bigg( \prod_{j = 1}^3 \ft \uu^*(t', n_j)\bigg) 
 dt'.
 \end{split}
\label{xNLS4}
\end{align}

\noi
By applying the product rule to the last term in \eqref{xNLS4}
and substituting
\eqref{xNLS2a}, 
we obtain two 
quintic terms, 
coming from the non-resonant and resonant terms
in \eqref{xNLS2a}, respectively.
In order to express these terms, 
we introduce two  
quintilinear operators  
$\NN \NN^\NLS_{t, r}(f_1, \dots, f_5)$
and 
$\NN \RR^\NLS_{t, r}(f_1, \dots, f_5)$, acting on functions on $\T$.
The first operator
$\NN \NN^\NLS_{t, r}(f_1, \dots, f_5)$
 represents the contribution
coming from the non-resonant part in substituting \eqref{xNLS2a}
and is defined by 
\begin{align}
\begin{split}
 \F_x & \big(\NN \NN^\NLS_{t, r}(f_1, \dots, f_5)\big)(n)\\
&  =   - 
2   \sum_{\substack{n = n_{12345}^*\\
n \ne n_{123}^*, n_5\\
n_{123}^* \ne n_1, n_3
}}
 \Phi^w_{t, r}(\Xi_\NLS (n, n_{123}^*, n_4, n_5))
   \\
& \hphantom{XXXXXXX}
\times 
e^{i  \Xi_\NLS (n_{123}^*, n_1, n_2, n_3)w(r)}
\prod_{j = 1}^5{\ft f_j}^*(n_j)\\
& \quad  + 
   \sum_{\substack{n = n_{12345}^*\\
n \ne n_1, n_5\\
n_{234}^* \ne n_2, n_4
}}
 \Phi^w_{t, r}(\Xi_\NLS (n, n_1, n_{234}^*,  n_5))
  \\
& \hphantom{XXXXXXX}
\times 
e^{- i  \Xi_\NLS (n_{234}^*, n_2, n_3, n_4)w(r)}
\prod_{j = 1}^5{\ft f_j}^*(n_j)\\
& =: \F_x\big(\1_{t, r}(f_1, \dots, f_5)\big)(n) + \F_x\big(\II_{t, r}(f_1, \dots, f_5)\big)(n)
\end{split} 
\label{xX5}
\end{align}

\noi
for $0 \le r< t  \le T$, 
where we used the conventions \eqref{no2}
and \eqref{gauge2}.
The second operator
$\NN \RR^\NLS_{t, r}(f_1, \dots, f_5)$
 represents the contribution
coming from the resonant part in substituting~\eqref{xNLS2a}
and is defined by 
\begin{align}
\begin{split}
  \F_x  & \big(\NN \RR^\NLS_{t, r}(f_1, \dots, f_5)\big)(n)\\
&  =    
- 2   
\sum_{\substack{n = n_1 - n_4 + n_5\\ n\neq n_{1},n_{5}}}
 \Phi^w_{t, r}(\Xi_\NLS (n, n_1, n_4, n_5))
 \\
& \hphantom{XXXXXXXX}
\times 
\F_x\big(\RR(f_1, f_2, f_3)\big)(n_1)
 \prod_{j = 4}^5{\ft f}_j^*(n_j)\\
& \quad 
+ 
\sum_{\substack{n = n_1 - n_2 + n_5\\ n\neq n_{1},n_{5}}}
 \Phi^w_{t, r}(\Xi_\NLS (n, n_1, n_2, n_5))
  \\
& \hphantom{XXXXXXXX}
\times 
\F_x\big(\RR(f_2, f_3, f_4)\big)(n_2)
\cdot \prod_{j \in \{1, 5\}} {\ft f}_j^*(n_j), 
\end{split} 
\label{xX6}
\end{align}

\noi
where $\RR$ is as in \eqref{non1}.

Hence, putting 
\eqref{xNLS3}, \eqref{xNLS4}, \eqref{xX5}, and \eqref{xX6}
together, we obtain
 the following normal form equation for 
the 
 modulated cubic NLS \eqref{xNLS2}:
\begin{align}
\begin{split}
\ft \uu(t, n) - \ft \uu(0, n) 
& =    \F_x\big(X^\NLS_{t, 0}(\uu(0))\big)(n)  
   -i   \int_0^t \F_x\big(\RR  (\uu(t'))\big) (n)dt'\\
& \quad  +  \int_0^t 
 \F_x\big(\NN\NN^\NLS_{t, t'}(\uu(t'))\big)(n)dt'\\
& \quad   +  \int_0^t 
 \F_x\big(\NN\RR^\NLS_{t, t'}(\uu(t'))\big)(n)dt', 
\end{split}
\label{xNLS5}
\end{align}

\noi
where we used the short-hand notation \eqref{short1}.

We now state lemmas on boundedness properties
of the multilinear operators appearing in~\eqref{xNLS5}.
Once we prove these lemmas
(Lemmas \ref{LEM:nonlin7}, 
\ref{LEM:nonlin8}, and \ref{LEM:nonlin9}), 
Theorem~\ref{THM:UU4} 
follows
from proceeding as in 
the proof of Theorem~\ref{THM:UU1}
for the modulated KdV~\eqref{kdv1}
presented in Section \ref{SEC:KDV}
with the observation that,   
given a solution  $u$
to~\eqref{xNLS1}
in the class
\[   C([0, \tau]; H^\frac 16(\T))
\subset C([0, \tau]; L^3(\T))
\]

\noi
for some $0 < \tau \le T$, 
 $\ft \uu (\cdot, n)$
is a $C^1$-function in $t$
for each fixed  $n \in \Z$, 
which is a key point in justifying the formal computations
in deriving the normal form equation \eqref{xNLS5};
see also~\cite{GKO, OW}.
Since this is standard, we omit details of the proof of 
Theorem~\ref{THM:UU4}.

\medskip

The boundedness of $\RR$ in \eqref{non1} follows
trivially.

\begin{lemma}\label{LEM:nonlin7}
Let $\RR$ be as in \eqref{non1}.
Then, given  any 
 $ s \ge 0$, 
we have 
\begin{align}
 \|\RR\|_{\L_3(H^s; H^{s})}
\les 1.
\label{non7}
\end{align}

\end{lemma}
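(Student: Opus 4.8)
The key structural observation is that $\Rf$ acts \emph{diagonally} in frequency: by \eqref{xNLS6}, the $n$th Fourier coefficient of $\Rf(f_1,f_2,f_3)$ depends only on the $n$th Fourier coefficients of the three inputs, with no summation over a frequency constraint. The plan is therefore to compute the $H^s$-norm directly and simply distribute the entire weight $\jb{n}^{2s}$ onto a single input factor, absorbing the other two into their $H^s$-norms via the trivial embedding $\ell^2 \hookrightarrow \ell^\infty$. No resonance-function gain or discrete-convolution estimate (Lemma \ref{LEM:SUM}) is needed, precisely because there is no sum to control.

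Concretely, from \eqref{xNLS6} and the definition of the $H^s$-norm, and using that the prefactor $i$ has modulus one, I would first write
\begin{align*}
\|\Rf(f_1, f_2, f_3)\|_{H^s}^2
= \sum_{n \in \Z} \jb{n}^{2s} |\ft f_1(n)|^2 |\ft f_2(n)|^2 |\ft f_3(n)|^2.
\end{align*}
Assigning the weight $\jb{n}^{2s}$ to the first factor and pulling the remaining two out in $\ell^\infty$ then gives
\begin{align*}
\|\Rf(f_1, f_2, f_3)\|_{H^s}^2
\le \Big(\sum_{n \in \Z} \jb{n}^{2s} |\ft f_1(n)|^2\Big)
\cdot \sup_{m \in \Z} |\ft f_2(m)|^2 \cdot \sup_{m \in \Z} |\ft f_3(m)|^2.
\end{align*}
Since $s \ge 0$ we have $\jb{m}^s \ge 1$, so $\sup_m |\ft f_j(m)| \le \sup_m \jb{m}^s |\ft f_j(m)| \le \|f_j\|_{H^s}$ by $\ell^2 \hookrightarrow \ell^\infty$. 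Combining these bounds yields $\|\Rf(f_1, f_2, f_3)\|_{H^s} \le \prod_{j=1}^3 \|f_j\|_{H^s}$, which is exactly \eqref{non7}.

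There is essentially no obstacle to overcome here, consistent with the remark that the boundedness ``follows trivially''; the entire content is the diagonality of $\Rf$, which eliminates the frequency summation responsible for all the difficulty in the companion estimates for $X^\NLS_{t,r}$, $\NN\NN^\NLS_{t,r}$, and $\NN\RR^\NLS_{t,r}$. One could just as well distribute the weight symmetrically by writing $\jb{n}^{2s} = \prod_{j=1}^3 \jb{n}^{2s/3}$ and placing $\jb{n}^{2s/3}$ on each factor, but the one-factor assignment above is the most economical route to the sharp constant $1$.
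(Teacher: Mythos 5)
Your proof is correct and is essentially the paper's own one-line argument: both exploit the diagonality of $\Rf$ in frequency and conclude by H\"older in $n$ after distributing the weight $\jb{n}^{s}$ using $s \ge 0$. The only (cosmetic) difference is the choice of exponents: the paper splits the weight symmetrically via $\jb{n}^s \le \jb{n}^{3s}$ and uses $\|a_n\|_{\l^6_n} \le \|a_n\|_{\l^2_n}$, whereas you use the $(2,\infty,\infty)$ split with $\l^2 \hookrightarrow \l^\infty$ --- a variant you yourself note at the end.
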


\begin{proof}
By Plancherel's identity, we see that the contribution from the first two terms on the right-hand side of \eqref{non1}
satisfies the bound \eqref{non7}, 
as long as $s \ge 0$.
As for the 
last term on the right-hand side of \eqref{non1}, 
the bound \eqref{non7} follows from $\jb{n}^s \le \jb{n}^{3s }$
for $s \ge 0$
and $\| a_n \|_{\l^6_n}\le \| a_n \|_{\l^2_n}$.
\end{proof}

Next, we establish boundedness of 
the trilinear operator  $X^\NLS_{t, r}$
defined in  \eqref{xX1}.

\begin{lemma}\label{LEM:nonlin8}
Given $\rho > \frac 12 $,  $\frac12< \g < 1$, and $T> 0$, 
let  $w$ be $(\rho,\g)$-irregular on $[0, T]$ in the sense of Definition~\ref{DEF:ir}.
Given $0 \le r < t \le T$, let $X^\NLS_{t, r}$
be as in \eqref{xX1}.
Then, 
given any $s \ge 0$, 
we have $X_{t, r}^\NLS \in \L_3(H^s(\T); H^{s}(\T))$
with the following bound\textup{:}
\begin{align*}
 \|X^\NLS_{t, r}\|_{\L_3(H^s; H^{s})}
\les 
(t-r)^\g  \|\Phi^w\|_{  \W^{\rho,\g}_T}
\end{align*}

\noi
for any $0 \le r < t \le T$, 
where the implicit constant is independent of $T > 0$.

\end{lemma}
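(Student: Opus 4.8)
The plan is to reduce the bound to a purely arithmetic estimate on the Fourier side and then exploit the factorized form of the resonance function together with a Cauchy--Schwarz argument. First I would start from the Fourier representation \eqref{xX2} of $X^\NLS_{t,r}(f_1, f_2, f_3)$ and apply the irregularity bound \eqref{rho1}, which gives $|\Phi^w_{t,r}(\Xi_\NLS(\bar n))| \les (t-r)^\g \|\Phi^w\|_{\W^{\rho,\g}_T}\, \jb{\Xi_\NLS(\bar n)}^{-\rho}$. Writing $g_j(n_j) = \jb{n_j}^s |\ft f_j(n_j)|$, so that $\|g_j\|_{\l^2_{n_j}} = \|f_j\|_{H^s}$, it then suffices to bound the $\l^2_n$-norm of
\[ \sum_{\substack{n = n_{123}^*\\ n \ne n_1, n_3}} \frac{\jb{n}^s}{\jb{n_1}^s\jb{n_2}^s\jb{n_3}^s\,\jb{\Xi_\NLS(\bar n)}^{\rho}}\, g_1(n_1)\,g_2(n_2)\,g_3(n_3) \]
by $\prod_{j=1}^3 \|g_j\|_{\l^2}$, uniformly in $t$ and $r$.

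The key structural input is the factorization \eqref{Xi9}, namely $\Xi_\NLS(\bar n) = 2(n - n_1)(n - n_3)$ under $n = n_{123}^*$. Since the summation excludes the resonant frequencies $n = n_1$ and $n = n_3$, both $n - n_1$ and $n - n_3$ are nonzero integers, whence $\jb{\Xi_\NLS(\bar n)} \ges \jb{n - n_1}\jb{n - n_3}$, using $\jb{ab} \ges \jb{a}\jb{b}$ for nonzero integers $a, b$. Combined with the elementary bound $\jb{n}^s \les \jb{n_1}^s\jb{n_2}^s\jb{n_3}^s$ for $s \ge 0$ (valid under $n = n_1 - n_2 + n_3$), the Sobolev weights cancel and the estimate reduces to bounding the $\l^2_n$-norm of
\[ \sum_{a, b \in \Z^*} \frac{g_1(n - a)\, g_2(n - a - b)\, g_3(n - b)}{\jb{a}^\rho\, \jb{b}^\rho}, \]
after the change of variables $a = n - n_1$, $b = n - n_3$, so that $n_2 = n - a - b$.

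To close the estimate I would apply Cauchy--Schwarz in the $(a,b)$-summation. Since $\rho > \tfrac12$, the weight $\jb{a}^{-\rho}\jb{b}^{-\rho}$ lies in $\l^2_{a,b}$, so pulling it out contributes a finite constant $C_\rho = \big(\sum_{a,b} \jb{a}^{-2\rho}\jb{b}^{-2\rho}\big)^{1/2}$, and it remains to estimate the $\l^2_n$-norm of $\big(\sum_{a,b} |g_1(n-a)|^2 |g_2(n-a-b)|^2 |g_3(n-b)|^2\big)^{1/2}$. Squaring and summing in $n$, the map $(n, a, b) \mapsto (n - a,\, n - a - b,\, n - b)$ is unimodular on $\Z^3$, so the resulting triple sum factorizes as $\|g_1\|_{\l^2}^2 \|g_2\|_{\l^2}^2 \|g_3\|_{\l^2}^2$. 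This yields the claimed bound with implicit constant $C_\rho$, independent of $T$.

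The argument is essentially routine; the one point requiring care is the resonance factorization, where the non-resonance condition $n \ne n_1, n_3$ is precisely what guarantees that both factors of $\Xi_\NLS$ are nonzero and hence that $\jb{\Xi_\NLS}^{\rho}$ controls the \emph{full} product $\jb{n-n_1}^{\rho}\jb{n-n_3}^{\rho}$ rather than a square-rooted version. It is exactly this full power, together with the two-variable structure of the remaining sum, that makes the threshold $\rho > \tfrac12$ sufficient for convergence of the $(a,b)$-summation.
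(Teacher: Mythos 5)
Your proposal is correct and follows essentially the same route as the paper's proof: both bound $\Phi^w$ via \eqref{rho1}, use the factorization $\Xi_\NLS(\bar n) = 2(n-n_1)(n-n_3)$ with the non-resonance condition to get $\jb{\Xi_\NLS(\bar n)}^{-\rho}\les \jb{n-n_1}^{-\rho}\jb{n-n_3}^{-\rho}$, cancel the Sobolev weights by the triangle inequality, and close with Cauchy--Schwarz in the two free frequency variables, which requires exactly $\rho>\frac12$. Your change of variables $a=n-n_1$, $b=n-n_3$ is only a cosmetic repackaging of the paper's Cauchy--Schwarz in $(n_1,n_3)$.
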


\begin{proof}

From 
\eqref{xX2},  Cauchy-Schwarz's inequality, 
\eqref{rho1}, and \eqref{Xi9} (note that $n \ne n_1, n_3$)
with the triangle inequality
 $\jb{n_{123}^*}^s \les \jb{n_1}^s\jb{n_2}^s\jb{n_3}^s$
for $s \ge 0$, 
we have 
\begin{align*}
\| X^\NLS_{t,r}  ( f _1, f _2, f_3) \|_{H^s}
& \les 
(t - r)^\g
\|\Phi^w\|_{  \W^{\rho,\g}_T} 
\Bigg\|\bigg( \sum_{n_1, n_3 \in \Z}
\frac {1}{\jb{n - n_1}^{2\rho}\jb{n - n_3}^{2\rho}}
\bigg)^\frac 12\\
& \quad \times \bigg(  
\sum_{n = n_{123}^*}
\prod_{j = 1}^3
\jb{n_j}^{2s}
 |\ft  f_j ( n_j )|^2\bigg)^\frac 12 
 \Bigg\|_{\l^2_n}\\
 & \les
 \|\Phi^w\|_{  \W^{\rho,\g}_T} 
(t - r)^\g
\prod_{j = 1}^3 \|f_j\|_{H^s}, 
\end{align*}

\noi
provided that $\rho > \frac 12$.
\end{proof}

Lastly,  we establish boundedness of 
the quintilinear operators  
$\NN \NN^\NLS_{t, r}(f_1, \dots, f_5)$
and 
$\NN \RR^\NLS_{t, r}(f_1, \dots, f_5)$
defined in  \eqref{xX5}
and \eqref{xX6}.

\begin{lemma}\label{LEM:nonlin9}
\textup{(i)}
Given $\rho > \frac 12 $,  $\frac12< \g < 1$, and $T> 0$, 
let  $w$ be $(\rho,\g)$-irregular on $[0, T]$ in the sense of Definition~\ref{DEF:ir}.
Given $0 \le r < t \le T$, let $\NN \RR^\NLS_{t, r}$
be as in \eqref{xX6}.
Then, given  $s \ge 0$, 
$\NN \RR^\NLS_{t, r}$  belongs to $\L_5(H^s(\T); H^{s}(\T))$, 
satisfying
 the following bound\textup{:}
\begin{align}
 \|\NN \RR^\NLS_{t, r}\|_{\L_5(H^s; H^{s})}
\les 
(t- r)^\g  \|\Phi^w\|_{  \W^{\rho,\g}_T}.
\label{Xnox1c}
\end{align}

\medskip

\noi
\textup{(ii)}
Given $\rho > \frac 23 $,  $\frac12< \g < 1$, and $T> 0$, 
let  $w$ be $(\rho,\g)$-irregular on $[0, T]$ in the sense of Definition~\ref{DEF:ir}.
Given $0 \le r < t \le T$, let $\NN \NN^\NLS_{t, r}$
be as in \eqref{xX5}.
Then, given  $s \ge \frac 16$, 
$\NN \NN^\NLS_{t, r}$  belongs to $\L_5(H^s(\T); H^{s}(\T))$, 
satisfying
 the following bound\textup{:}
\begin{align}
 \|\NN \NN^\NLS_{t, r}\|_{\L_5(H^s; H^{s})}
\les 
(t- r)^\g  \|\Phi^w\|_{  \W^{\rho,\g}_T}.
\label{Xnox1d}
\end{align}

\end{lemma}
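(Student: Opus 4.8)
The plan is to treat both estimates by the scheme already used for Lemmas \ref{LEM:nonlin2}, \ref{LEM:nonlin4}, and \ref{LEM:nonlin6}. First I would invoke the $(\rho,\g)$-irregularity \eqref{rho1} to replace each factor $|\Phi^w_{t,r}(\Xi_\NLS(\cdots))|$ by $(t-r)^\g\|\Phi^w\|_{\W^{\rho,\g}_T}\jb{\Xi_\NLS(\cdots)}^{-\rho}$, discard the inner phases $e^{\pm i\Xi_\NLS(\cdots)w(r)}$ (which are unimodular and hence contribute nothing), and pass all factors to absolute values. This reduces \eqref{Xnox1c} and \eqref{Xnox1d} to bounding weighted convolution sums by $\prod_{j=1}^5\|f_j\|_{H^s}$, exactly as in the reduction leading to \eqref{nox4}. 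Throughout I would use that, under $n=n_{12345}^*$, one has $\jb{n}^s\les\prod_{j=1}^5\jb{n_j}^s$ for $s\ge 0$ (more precisely $\jb{n}^s\les\jb{n_{\max}}^s$), together with the elementary bound $\jb{\mu_1\mu_2}^{-\rho}\les\jb{\mu_1}^{-\rho}\jb{\mu_2}^{-\rho}$ for nonzero integers $\mu_1,\mu_2$, which splits the outer resonance $\Xi_\NLS=2\mu_1\mu_2$ in \eqref{Xi9} into a product of two one-dimensional weights amenable to Lemma \ref{LEM:SUM}.

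For part (i), the key simplification is the resonant collapse: in each term of \eqref{xX6} three of the five functions are evaluated at a single frequency ($n_1$ or $n_2$). I would pull two of these out of the sum via $\sup_{k}\jb{k}^s|\ft f_j(k)|\le\|f_j\|_{H^s}$, which leaves an essentially trilinear sum in the three remaining free frequencies carrying the separated outer decay $\jb{\mu_1}^{-\rho}\jb{\mu_2}^{-\rho}$. With $\rho>\frac12$ and $s\ge 0$, Cauchy-Schwarz together with Lemma \ref{LEM:SUM} closes this sum and yields \eqref{Xnox1c}.

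For part (ii), by the symmetry between $\1_{t,r}$ and $\II_{t,r}$ in \eqref{xX5} it suffices to estimate one of them, say $\1_{t,r}$. Writing $m=n_{123}^*$, the only available decay comes from the outer resonance $\Xi_\NLS(n,m,n_4,n_5)=2(n_5-n_4)(m-n_4)$, since the inner resonance is buried in a unimodular phase. I would therefore first perform the inner trilinear sum over $n_1,n_2,n_3$ with $n_1-n_2+n_3=m$ by viewing $\sum_{m=n_1-n_2+n_3}\prod_{j=1}^3|\ft f_j(n_j)|$ as a Fourier coefficient of a product of three functions and applying the embedding $H^{\frac16}(\T)\subset L^3(\T)$ with H\"older's inequality ($L^3\cdot L^3\cdot L^3\subset L^1$), which controls $\sup_m\sum_{m=n_1-n_2+n_3}\prod_{j=1}^3|\ft f_j(n_j)|$ by $\prod_{j=1}^3\|f_j\|_{H^{1/6}}$. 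This is the origin of the threshold $s\ge\frac16$. It then remains to sum the outer double-resonance weight $\jb{n_5-n_4}^{-\rho}\jb{m-n_4}^{-\rho}$ over $(m,n_4,n_5)$ while recovering the output weight $\jb{n}^s\les\jb{n_{\max}}^s$, via Cauchy-Schwarz and Lemma \ref{LEM:SUM}.

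The main obstacle is precisely this last summation at the sharp endpoint $s=\frac16$, in the regime where the inner output frequency $m=n_{123}^*$ is large and comparable to $n$ (so $n\sim m$). There the $L^3$-bound on the inner cubic has already consumed all of the available regularity, leaving nothing to spare on $f_1,f_2,f_3$, so the output weight $\jb{n}^s\sim\jb{m}^s$ must be absorbed entirely by the single outer factor $\jb{m-n_4}^{-\rho}$ upon summing in $m$. Since $\sum_m\jb{m}^{2s}\jb{m-n_4}^{-2\rho}$ converges only when $\rho>s+\frac12$, this is exactly what pins down $\rho>\frac23$ together with $s\ge\frac16$. The complementary regimes, where one of $n_4,n_5$ is maximal, are easier, since the output weight can then be absorbed using the genuine $H^s$-weights on $f_4$ or $f_5$; these are handled by Cauchy-Schwarz and Lemma \ref{LEM:SUM}. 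Once these cases are combined, \eqref{Xnox1d} follows, and with part (i) this completes the proof of Lemma \ref{LEM:nonlin9}.
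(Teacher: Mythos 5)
Your treatment of part (i) is correct but more laborious than the paper's: the paper simply notes the composition identity
\begin{align*}
\NN \RR^\NLS_{t, r}(f_1, \dots, f_5)
= 2X^\NLS_{t,r}  ( \Rf(f_1, f_2, f_3) , f _4, f_5)
+ X^\NLS_{t,r}  ( f_1, \Rf(f_2, f_3, f_4) ,  f_5)
\end{align*}
and concludes at once from Lemmas \ref{LEM:nonlin7} and \ref{LEM:nonlin8}; your direct summation (two collapsed factors pulled out in $\l^\infty$, then the Cauchy--Schwarz argument of Lemma \ref{LEM:nonlin8} on what remains) reaches the same bound under the same hypotheses.

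Part (ii) has two genuine problems. First, $\1_{t,r}$ and $\II_{t,r}$ are not exchanged by any symmetry: writing $m$ for the collapsed inner frequency, the outer resonance function factors as $2(n_5-n_4)(m-n_4)$ for $\1_{t,r}$ but as $2(n_1-m)(n_5-m)$ for $\II_{t,r}$, so the two decay weights attach to different pairs of variables and the case analysis must be carried out separately (the paper's Cases (a)--(b) versus (c)--(d)). Second, and more seriously, the outer summation does \emph{not} close by ``Cauchy--Schwarz and Lemma \ref{LEM:SUM}'' at the stated thresholds. In the main regime $|n|\gg|n_5|$ for $\1_{t,r}$, after extracting $\sup_m$ of the inner cubic sum (which costs $\prod_{j=1}^3\|f_j\|_{L^3}\les\prod_{j=1}^3\|f_j\|_{H^{1/6}}$, as you correctly identify) and absorbing $\jb{n}^{1/6}$ into $\jb{m-n_4}^{-\rho}\sim\jb{n}^{-\rho}$, one is left with $\sum_{n_4,n_5}\jb{n_4-n_5}^{-\rho}|\ft f_4(n_4)|\,|\ft f_5(n_5)|$, and bounding this by $\|f_4\|_{H^s}\|f_5\|_{H^s}$ using only $\l^2$ information on $\ft f_4,\ft f_5$ forces $\jb{\cdot}^{-\rho}\in\l^1$, i.e.\ $\rho>1$. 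To reach $\rho>\frac23$ one must use $f_4$ and $f_5$ through $L^3$ as well, i.e.\ $\ft f_4,\ft f_5\in\l^{3/2}$ by Hausdorff--Young, combined with Young's convolution inequality and $\jb{\cdot}^{-\rho}\in\l^{3/2}$; this is exactly what the paper does in \eqref{non93}, \eqref{non95}, and \eqref{non98}, with H\"older exponents $3$, $\frac32$, and $6$. So the embedding $H^{1/6}(\T)\subset L^3(\T)$ is needed not only for the three collapsed inner factors but also for the outer factors $f_4,f_5$ (respectively $f_1,f_5$ in $\II_{t,r}$); as written, your argument would only yield the lemma for $\rho>1$. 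Your identification of where the threshold $\rho>\frac23$ arises --- the $\l^2_n$-summability of $\jb{n}^{1/6-\rho}$ in the regime $n\sim m$ --- is nonetheless correct.
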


\begin{proof}
(i) 
From 
\eqref{xX6} and  \eqref{xX2},  we have 
\begin{align*}
\NN \RR^\NLS_{t, r}(f_1, \dots, f_5)
& = -2i X^\NLS_{t,r}  ( \RR(f_1, f_2, f_3) , f _4, f_5)\\
& \,\quad + i X^\NLS_{t,r}  ( f_1, \RR(f_2, f_3, f_4) ,  f_5).
\end{align*}

\noi
Hence, the bound
\eqref{Xnox1c} follows from Lemmas \ref{LEM:nonlin7}
and \ref{LEM:nonlin8}, 
provided that $\rho > \frac 12 $ and $s \ge 0$.

\medskip

\noi
(ii) 
In view of the triangle inequality $\jb{n_{1\cdots5}^*}^\s \les \prod_{j= 1}^5 \jb{n_j}^\s$
for $\s \ge 0$, we only consider the case $s = \frac 16 $.

We first estimate $\1_{t, r}(f_1, \dots, f_5)$ in \eqref{xX5}.
\medskip

\noi
$\bul$ {\bf Case (a):} $|n|\gg |n_5|$.\\ 
\indent
From \eqref{xX5}, \eqref{rho1}, and \eqref{Xi9}
with 
$\jb{n} \sim \jb{n - n_5}$, 
we have 
\begin{align}
\begin{split}
 \|\1_{t, r}(f_1, \dots, f_5)\|_{H^\frac 16}
&  
\les 
(t - r)^\g
\|\Phi^w\|_{  \W^{\rho,\g}_T} 
\bigg\| \sum_{m = n_{123}^*}
\prod_{j = 1}^3
{\ft f_j}^*(n_j)\bigg\|_{\l^\infty_{m}}\\
& \quad \times \bigg\| 
\sum_{n = m - n_4 + n_5}
\frac {|\ft f_4(n_4)|  |\ft f_5(n_5)|}{\jb{n - m}^{\rho}\jb{n - n_5}^{\rho - \frac 16}}
\bigg\|_{\l^2_n}.
\end{split}
\label{non91}
\end{align}

\noi
By Hausdorff-Young's and Sobolev's inequalities, we have 
\begin{align}
\bigg\| \sum_{m = n_{123}^*}
\prod_{j = 1}^3
{\ft f_j}^*(n_j)\bigg\|_{\l^\infty_{m}}
\le \prod_{j = 1}^3 \|f_j\|_{L^3}\les \prod_{j = 1}^3 \|f_j\|_{H^\frac 16}.
\label{non92}
\end{align}

\noi
On the other hand, 
using  $\jb{n} \sim \jb{n - n_5}$
and applying Young's,  
Hausdorff-Young's,  and Sobolev's inequalities, we have 
the last factor on the right-hand side of \eqref{non91} is bounded by 
\begin{align}
\begin{split}
& \les 
\bigg\|\frac 1{\jb{n}^{\rho - \frac 16}}
\bigg\|_{\l^2_n}
\cdot \sum_{n_4, n_5 \in \Z}
\frac {|\ft f_4(n_4)|  |\ft f_5(n_5)|}{\jb{n_4 - n_5}^{\rho}}\\
& \les 
\| \ft f_4(n_4)\|_{\l^\frac 32_{n_4}}  
\bigg\|
\sum_{n_5 \in \Z}
\frac{\ft f_5(n_5)}{\jb{n_4 - n_5}^{\rho}}\bigg\|_{\l^3_{n_4}}\\
& \les \prod_{j = 4}^5 \|f_j\|_{L^3}\les \prod_{j = 4}^5 \|f_j\|_{H^\frac 16}, 
\end{split}
\label{non93}
\end{align}

\noi
provided that $\rho > \frac 23$.
Hence, the bound \eqref{Xnox1d}
follows from \eqref{non91}, 
\eqref{non92}, and 
\eqref{non93} in this case.

\medskip

\noi
$\bul$ {\bf Case (b):} $|n|\les |n_5|$.\\ 
\indent
By $\jb{n}^\frac 16 \les\jb{n_5}^\frac 16$, we have 
\begin{align}
\begin{split}
 \|\1_{t, r}(f_1, \dots, f_5)\|_{H^\frac 16}
&  
\les 
(t - r)^\g
\|\Phi^w\|_{  \W^{\rho,\g}_T} 
\bigg\| \sum_{m = n_{123}^*}
\prod_{j = 1}^3
{\ft f_j}^*(n_j)\bigg\|_{\l^\infty_{m}}\\
& \quad \times \bigg\| 
\sum_{n = m - n_4 + n_5}
\frac {|\ft f_4(n_4)|  \jb{n_5}^\frac 16 |\ft f_5(n_5)|}{\jb{n - m}^{\rho}\jb{n - n_5}^{\rho}}
\bigg\|_{\l^2_n}, 
\end{split}
\label{non94}
\end{align}

\noi
where the last factor is bounded by 
\begin{align}
\begin{split}
& \les 
\bigg\|
\sum_{n_4 \in \Z}
\frac{\ft f_4(n_4)}{\jb{n_4 - n_5}^{\rho}}\bigg\|_{\l^3_{n_5}}
\Bigg\|
\bigg\|
\frac{  \jb{n_5}^\frac 16 |\ft f_5(n_5)|}{\jb{n - n_5}^{\rho}}\bigg\|_{\l^\frac 32_{n_5}}
\Bigg\|_{\l^2_n}\\
& \les  \|f_4\|_{L^3}
\|f_5\|_{H^\frac 16}
\les \prod_{j = 4}^5 \|f_j\|_{H^\frac 16}, 
\end{split}
\label{non95}
\end{align}

\noi
provided that $\rho > \frac 23$.
Hence, the bound \eqref{Xnox1d}
follows from \eqref{non94}, 
\eqref{non92}, and 
\eqref{non95} in this case.

\medskip

Next, we estimate
the second term
 $\II_{t, r}(f_1, \dots, f_5)$ in \eqref{xX5}.

\medskip

\noi
$\bul$ {\bf Case (c):} $|n|\gg \max(|n_1|, |n_5|)$.\\ 
\indent
Proceeding as in \eqref{non91} with $\jb{n- n_5} \sim \jb{n}$, we have
\begin{align}
\begin{split}
 \|\II_{t, r}(f_1, \dots, f_5)\|_{H^\frac 16}
&  
\les 
(t - r)^\g
\|\Phi^w\|_{  \W^{\rho,\g}_T} 
\bigg\| \sum_{m = n_{234}^*}
\prod_{j = 2}^4
{\ft f_j}^*(n_j)\bigg\|_{\l^\infty_{m}}\\
& \quad \times \bigg\| 
\sum_{n = n_1 -m + n_5}
\frac {|\ft f_1(n_1)|  |\ft f_5(n_5)|}{\jb{n - n_1}^{\rho}\jb{n}^{\rho - \frac 16}}
\bigg\|_{\l^2_n}.
\end{split}
\label{non96}
\end{align}

\noi
With $\jb{n-n_1}\ges \jb{n_1 - n_5}$, we can bound the last factor 
of the right-hand side of \eqref{non96}
by~\eqref{non93} (after relabelling), 
while the penultimate factor of the right-hand side of \eqref{non96}
can be bounded by~\eqref{non92} (after relabelling).
This proves \eqref{Xnox1d} in this case, provided that $\rho > \frac 23$.

\medskip

\noi
$\bul$ {\bf Case (d):} $|n|\les \max(|n_1|, |n_5|)$.\\ 
\indent
Without loss of generality, assume that $|n_5|\ges |n_1|$.
Then, 
proceeding as in \eqref{non94},  we have 
\begin{align}
\begin{split}
 \|\II_{t, r}(f_1, \dots, f_5)\|_{H^\frac 16}
&  
\les 
(t - r)^\g
\|\Phi^w\|_{  \W^{\rho,\g}_T} 
\bigg\| \sum_{m = n_{234}^*}
\prod_{j = 2}^4
{\ft f_j}^*(n_j)\bigg\|_{\l^\infty_{m}}\\
& \quad \times \bigg\| 
\sum_{n = n_1-m  + n_5}
\frac {|\ft f_1(n_1)|  \jb{n_5}^\frac 16 |\ft f_5(n_5)|}{\jb{n - n_1}^{\rho}\jb{n - n_5}^{\rho}}
\bigg\|_{\l^2_n}, 
\end{split}
\label{non97}
\end{align}

\noi
where the last factor is bounded by 
\begin{align}
\begin{split}
& \les 
\bigg\|
\sum_{n_1 \in \Z}
\frac{\ft f_1(n_1)}{\jb{n - n_1}^{\rho}}\bigg\|_{\l^3_{n}}
\bigg\|
\sum_{n_5 \in \Z} \frac{  \jb{n_5}^\frac 16 |\ft f_5(n_5)|}{\jb{n - n_5}^{\rho}}\bigg\|_{\l^6_{n}}\\
& \les  \|f_1\|_{L^3}
\|f_5\|_{H^\frac 16}
\les \prod_{j \in \{1, 5\}} \|f_j\|_{H^\frac 16}, 
\end{split}
\label{non98}
\end{align}

\noi
provided that $\rho > \frac 23$.
Hence, the bound \eqref{Xnox1d}
follows from \eqref{non97}, 
and \eqref{non98}
with 
\eqref{non92} (after relabelling).
\end{proof}

\begin{remark} \label{REM:NLS1}\rm

(i)
By slightly modifying the normal form reduction, 
 it is possible to prove nonlinear smoothing 
for \eqref{xNLS1},
as in Theorems \ref{THM:UU1}, \ref{THM:UU2}, and \ref{THM:UU3}.
More precisely, 
as  in 
\cite{GKO, OW}, 
we can implement a normal form reduction
by introducing a suitable cutoff $K$ for the resonance function $\Xi_{\NLS}(\bar n)$ in \eqref{Xi9}
and thus separately considering
the {\it nearly resonant} contribution 
(coming from $|\Xi_{\NLS}(\bar n)|\ll K$)
and the {\it highly
non-resonant} contribution
(coming from $|\Xi_{\NLS}(\bar n)|\ges K$).
For example, take $K \sim n_{\max}$.
In  the nearly resonant case: $|\Xi_\NLS(\bar n)| \ll n_{\max}$, 
where 
$\Xi_\NLS(\bar n)$ and 
$n_{\max}$ are as in 
\eqref{Xi9} and \eqref{max1}, respectively, 
we have
\begin{align*}
 |n|\sim |n_1|\sim|n_2|\sim|n_3|,
\end{align*}

\noi
which implies nonlinear smoothing, 
provided that $s_0 \le 3s$
(possibly with further conditions).
On the other hand, 
it follows from~\eqref{rho1} 
and $|\Xi_\NLS(\bar n)| \ges  n_{\max}$
that  nonlinear smoothing of arbitrary degree
holds 
for the highly non-resonant part
provided that $\rho$ is sufficiently large.
For conciseness of the presentation, we omit details.

\medskip

\noi
(ii) 
We note that splitting the nonlinearity
into the nearly resonant and highly non-resonant parts, 
as explained in Part (i), 
is the basis of (a practical application) the normal form method
as seen in \cite{GKO, OW}.
In the modulated setting, 
it follows from  the identity~\eqref{NS1} with 
\eqref{rho1}
that 
the  highly
non-resonant contribution
(coming from $|\Xi(\bar n)|\ges K$, 
where $\Xi(\bar n)$ denotes the resonance function of a given equation)
enjoys a gain of $K^{-\rho}$, 
while the nearly resonant contribution
benefits from the frequency restriction coming from 
$|\Xi(\bar n)|\ll K$.
This viewpoint applies to a general modulated dispersive PDE, not restricted to one-dimensional models.
For example, 
there are unconditional uniqueness results 
\cite{Kishi21, BOW} 
for higher dimensional (unmodulated dispersive) PDEs
 via the normal form method.
We expect that 
our normal form approach 
with the identity~\eqref{NS1}
applies to 
the modulated versions of these 
higher dimensional models.

\end{remark}

\begin{remark}\label{REM:NLS2}\rm
(i)
In the unmodulated setting, 
an infinite iteration of normal form reductions
was needed to prove unconditional uniqueness
of the (unmodulated) cubic NLS \eqref{NLS3} in $H^\frac 16(\T)$;
see \cite{GKO}.
On the other hand, we only needed to 
implement one normal form reduction
to prove Theorem \ref{THM:UU4},
which can be seen as an instance of regularization by noise.

\medskip

\noi
(ii)
The regularity restriction $s \ge \frac 16$
appears in 
\eqref{non92}, 
which shows that if we were to stop normal form reductions
within a finite number of steps, 
we would not be able to go below 
the regularity threshold $ s= \frac 16$.
Hence, as in  the case of the unmodulated cubic NLS  \eqref{NLS3}
studied in 
 \cite{GKO, OW}, 
we expect that an infinite iteration of normal form reductions is needed
to go below $ s= \frac 16$.
We, however,  point out
that this comes with a twist due to 
the fact that the identity \eqref{NS0}
is replaced by 
\eqref{NS1} in the modulated setting.
We will address this issue in a forthcoming paper.

\end{remark}

\section{Declarations}

\noi
{\bf Funding.}
M.G.~was supported by 
the UKRI Frontier Research Grant 
(grant no.~EP/Z534328/1 ``Stochastic analysis of quantum fields"). 
G.L. was supported by the NSFC (grant no.~12501181).
T.O.~was supported by the European Research Council (grant no.~864138 ``SingStochDispDyn")
and  by the EPSRC 
Mathematical Sciences
Small Grant  (grant no.~EP/Y033507/1).
T.O. also 
acknowledges support from  
the NSFC (grant no.~W2531005).

\medskip

\noi
{\bf Competing interests.}
The authors have no competing interests to declare that are relevant to the content of this article.

\begin{ackno}\rm
The authors would like to thank Andreia Chapouto
for a comment on numerical schemes
in Remark \ref{REM:num}.
They would also  like to express their gratitude to the anonymous
referees for the helpful comments which improved the quality of the paper.

\end{ackno}

\end{document}